\begin{document}
\def\C{\mathbb{C}}
\def\R{\mathbb{R}}
\def\Rn{{\mathbb{R}^n}}
\def\Rns{{\mathbb{R}^{n+1}}}
\def\Sn{{{S}^{n-1}}}
\def\M{\mathbb{M}}
\def\N{\mathbb{N}}
\def\Q{{\mathbb{Q}}}
\def\Z{\mathbb{Z}}
\def\F{\mathcal{F}}
\def\L{\mathcal{L}}
\def\S{\mathcal{S}}
\def\supp{\operatorname{supp}}
\def\essi{\operatornamewithlimits{ess\,inf}}
\def\esss{\operatornamewithlimits{ess\,sup}}

\newtheorem{theorem}{Theorem}    
\newtheorem{proposition}[theorem]{Proposition}
\newtheorem{conjecture}[theorem]{Conjecture}
\def\theconjecture{\unskip}
\newtheorem{corollary}[theorem]{Corollary}
\newtheorem{lemma}[theorem]{Lemma}
\newtheorem{sublemma}[theorem]{Sublemma}
\newtheorem{observation}[theorem]{Observation}
\theoremstyle{definition}
\newtheorem{definition}{Definition}
\newtheorem{notation}[definition]{Notation}
\newtheorem{remark}[definition]{Remark}
\newtheorem{question}[definition]{Question}
\newtheorem{questions}[definition]{Questions}
\newtheorem{example}[definition]{Example}
\newtheorem{problem}[definition]{Problem}
\newtheorem{exercise}[definition]{Exercise}

\numberwithin{theorem}{section} \numberwithin{definition}{section}
\numberwithin{equation}{section}

\def\earrow{{\mathbf e}}
\def\rarrow{{\mathbf r}}
\def\uarrow{{\mathbf u}}
\def\varrow{{\mathbf V}}
\def\tpar{T_{\rm par}}
\def\apar{A_{\rm par}}

\def\reals{{\mathbb R}}
\def\torus{{\mathbb T}}
\def\heis{{\mathbb H}}
\def\integers{{\mathbb Z}}
\def\naturals{{\mathbb N}}
\def\complex{{\mathbb C}\/}
\def\distance{\operatorname{distance}\,}
\def\support{\operatorname{support}\,}
\def\dist{\operatorname{dist}\,}
\def\Span{\operatorname{span}\,}
\def\degree{\operatorname{degree}\,}
\def\kernel{\operatorname{kernel}\,}
\def\dim{\operatorname{dim}\,}
\def\codim{\operatorname{codim}}
\def\trace{\operatorname{trace\,}}
\def\Span{\operatorname{span}\,}
\def\dimension{\operatorname{dimension}\,}
\def\codimension{\operatorname{codimension}\,}
\def\nullspace{\scriptk}
\def\kernel{\operatorname{Ker}}
\def\ZZ{ {\mathbb Z} }
\def\p{\partial}
\def\rp{{ ^{-1} }}
\def\Re{\operatorname{Re\,} }
\def\Im{\operatorname{Im\,} }
\def\ov{\overline}
\def\eps{\varepsilon}
\def\lt{L^2}
\def\diver{\operatorname{div}}
\def\curl{\operatorname{curl}}
\def\etta{\eta}
\newcommand{\norm}[1]{ \|  #1 \|}
\def\expect{\mathbb E}
\def\bull{$\bullet$\ }
\def\xone{x_1}
\def\xtwo{x_2}
\def\xq{x_2+x_1^2}
\newcommand{\abr}[1]{ \langle  #1 \rangle}
\allowdisplaybreaks

\newcommand{\Norm}[1]{ \left\|  #1 \bigg\| }
\newcommand{\set}[1]{ \left\{ #1 \bigg\} }
\def\one{\mathbf 1}
\def\whole{\mathbf V}
\newcommand{\modulo}[2]{[#1]_{#2}}

\def\scriptf{{\mathcal F}}
\def\scriptg{{\mathcal G}}
\def\scriptm{{\mathcal M}}
\def\scriptb{{\mathcal B}}
\def\scriptc{{\mathcal C}}
\def\scriptt{{\mathcal T}}
\def\scripti{{\mathcal I}}
\def\scripte{{\mathcal E}}
\def\scriptv{{\mathcal V}}
\def\scriptw{{\mathcal W}}
\def\scriptu{{\mathcal U}}
\def\scriptS{{\mathcal S}}
\def\scripta{{\mathcal A}}
\def\scriptr{{\mathcal R}}
\def\scripto{{\mathcal O}}
\def\scripth{{\mathcal H}}
\def\scriptd{{\mathcal D}}
\def\scriptl{{\mathcal L}}
\def\scriptn{{\mathcal N}}
\def\scriptp{{\mathcal P}}
\def\scriptk{{\mathcal K}}
\def\frakv{{\mathfrak V}}

\author{Zengyan Si}
\address{
        Zengyan Si\\
       School of Mathematics and Information Science\\
       Henan Polytechnic University\\ Jiaozuo 454000\\People's
Republic of China} \email{zengyan@hpu.edu.cn}
\thanks{The first author and second author were supported by NSFC (No.11401175, No.11501169 and No. 11471041), the Fundamental Research Funds for the Central Universities (No. 2014KJJCA10) and NCET-13-0065. The third named author was supported partly by Grant-in-Aid for Scientific Research (C) Nr. 15K04942, Japan Society
for the Promotion of Science.\\
\indent Corresponding author: Qingying Xue \indent Email: qyxue@bnu.edu.cn}

\keywords{Multilinear square functions; Fourier multiplier operator; multiple weights; commutators.  }


\author{Qingying Xue}
\address{Qingying Xue
\\
School of Mathematical Sciences
\\
Beijing Normal University
\\
Laboratory of Mathematics and Complex Systems
\\
Ministry of Education
\\
Beijing 100875
\\
People's Republic of China } \email{qyxue@bnu.edu.cn}
\author{K\^{o}z\^{o} Yabuta}
\address{K\^{o}z\^{o} Yabuta
\\ Research Center for Mathematical Sciences
\\
Kwansei Gakuin University
\\
Gakuen 2-1, Sanda 669-1337
\\
Japan }

\date{\today}

\title
[On the bilinear square Fourier multiplier operators ...] {On the bilinear square Fourier multiplier operators and related multilinear square functions}

\maketitle
\begin{abstract}
Let $n\ge 1$ and $\mathfrak{T}_{m}$ be the bilinear square Fourier multiplier
operator associated with a symbol $m$, which is defined by
$$
\mathfrak{T}_{m}(f_1,f_2)(x)
= \biggl( \int_{0}^\infty\Big|\int_{(\mathbb{R}^n)^2}
e^{2\pi ix\cdot (\xi_1 +\xi_2) }m(t\xi_1,t\xi_2)
\hat{f}_{1}(\xi_1)\hat{f}_{2}(\xi_2)d\xi_1 d\xi_2\Big|^2\frac{dt}{t }
\biggr)^{\frac 12}.
$$
Let $s$ be an integer with $s\in[n+1,2n]$ and $p_0$ be a number satisfying
$2n/s\le p_0\le 2$. Suppose that
$\nu_{\vec{\omega}}=\prod_{i=1}^2\omega_i^{p/ p_i}$ and each $\omega_i$ is a
nonnegative function on $\mathbb{R}^n$.
In this paper, we show that $\mathfrak{T}_{m}$ is bounded from
$L^{p_1}(\omega_1)\times L^{p_2}(\omega_2)$ to $L^p(\nu_{\vec{\omega}})$
if $p_0< p_1, p_2<\infty$ with $1/p=1/p_1+ 1/p_2$. Moreover, if $p_0>2n/s$ and
$p_1=p_0$ or $p_2=p_0$, then $\mathfrak{T}_{m}$ is bounded from
$L^{p_1}(\omega_1)\times L^{p_2}(\omega_2)$ to
$L^{p,\infty}(\nu_{\vec{\omega}})$. The weighted end-point $L\log L$ type
estimate and strong estimate for the commutators of $\mathfrak{T}_{m}$ are
also given. These were done by considering the boundedness of some related
multilinear square functions associated with mild regularity kernels and essentially improving some basic lemmas which have been used before.
\end{abstract}
\section{Introduction}
\subsection{Background}
The multilinear Calder\'{o}n-Zygmund operators were first introduced and
studied by Coifman and Meyer \cite{coif1, coif2}, and later on by
Grafakos and Torres \cite{Gra1, Gra2}. Due to the close relationship between
the Calder\'{o}n-Zygmund operators and Littlewood-Paley operators,
in the meantime, the multilinear Littlewood-Paley $g$-function and related
multilinear Littlewood-Paley type estimates were used in PDE and other fields
(\cite{Coifman01,Coifman02,David,Fabes1,Fabes2,Fabes}). For example, in
\cite{Fabes}, the authors studied a class of multilinear square functions and
applied it to the well-known Kato's problem. For more works about multilinear
Littlewood-Paley type operators, see \cite{chenxue,Shi} and the references
therein. Recently, in the theory of multilinear operators, efforts have been
made to remove or replace the smoothness condition assumed on the kernels,
among these achievements are the nice works of Bui and Duong \cite{anh},
Grafakos, Liu and Yang \cite{grafakos}, Tomita \cite{NT}, Grafakos, Miyachi and Tomita \cite{grafakos44} and more recent work of Grafakos,
He and Honz\'{i}k \cite{Gra3}.

It is also well known that the following $N$-linear $(N \ge 1)$ Fourier
multiplier operator $T_m$ was introduced by Coifman and Meyer in \cite{coif3}.
\begin{equation*}
T_m(f_1,\cdots,f_N)(x)
=\frac{1}{(2\pi)^{nN}} \int_{(\mathbb{R}^n)^N} e^{ix \cdot ( \xi_1 + \cdots + \xi_N )} m(\xi) \widehat{f_1}(\xi_1) \cdots \widehat{f_N}(\xi_N)d\xi.
\end{equation*}
Suppose that $m$ is a bounded function on $\mathbb{R}^{nN}\backslash \{0\}$ and it satisfies that
\begin{equation}\label{Condition}
|\partial_{\xi_1}^{{\alpha}_1}\cdots\partial_{\xi_N}^{{\alpha}_N}
m(\xi_1,\cdots,\xi_N)|\leq
C_{\alpha}(|\xi_1|+\cdots+|\xi_m|)^{-(|\alpha_1|+\cdots+|\alpha_N|)},
\end{equation}
away from the origin for all sufficiently large multiindices $\alpha_j$. Then, it was shown in \cite{coif3}
that $T_{m}$ is bounded from $L^{p_1}(\mathbb{R}^{n})\times\cdots\times L^{p_N}(\mathbb{R}^{n})$
to $L^p(\mathbb{R}^{n})$.
In 2010, by weakening the smoothness condition $(\ref{Condition})$, Tomita \cite{NT} gave a H\"{o}rmander type theorem for $T_{m}$.
Later, Grafakos and Si \cite{si} gave a similar result for the case $p\leq 1$ by using the $L^r$-based Sobolev spaces ($1<r\leq 2$).
Subsequently, Grafakos, Miyachi and Tomita \cite{grafakos44} proved that if $m\in L^{\infty}(\mathbb{R}^{nN})$ satisfies
$\sup_{k\in\mathbb{Z}}\|m_k\|_{W^{(s_1,\cdots,s_N)}({\mathbb{R}}^{nN})}<\infty
\hspace{0.1cm}\text{with}\hspace{0.05cm} s_1,\cdots,s_N>n/2$, then $T_m$ is bounded from
$L^2({\mathbb{R}}^{n})\times L^{\infty}({\mathbb{R}}^{n})\times \cdots \times L^{\infty}({\mathbb{R}}^{n})$ to $L^2({\mathbb{R}}^{n})$.

A weighted version of the results in \cite{NT} for $T_m$ was given by Fujita and Tomita \cite{FT} under the H\"{o}rmander condition with classical $A_p$ weights. Recently,
Li and Sun \cite{lisun} demonstrated a H\"{o}rmander type multiplier theorem for $T_m$ with multiple weights. Furthermore, they obtained some weighted
estimates for the commutators of $T_m$ with vector version of $BMO$ functions. Still more recenty, Li, Xue and Yabuta \cite{lixueya} considered the estimates about weighted Carleson measure, and consequently they obtained some weighted results of $T_m$ by considering the missing endpoint parts of the results in \cite{FT}.
\subsection{Results on multilinear Fourier multiplier}
It is also well known that Lacey \cite{lacey} studied the following bilinear Littlewood-Paley square function defined by
$$
T(f,g)(x)=\bigg( \sum_{l\in \mathbb{Z}^d}|T_{\phi_l}(f,g)(x)|^2\bigg)^{1/2},
$$
where the bilinear operator $T_{\phi_l}$ associated with a smooth function $\phi_l$ whose Fourier
transform is supported in $\omega_l$ is defined by
$$
T_{\phi_l}(f,g)(x)= \int_{(\mathbb{R}^n)^2} \hat{f}(\xi)\hat{g}(\eta)\hat{\phi}_l(\xi-\eta)e^{2\pi ix\cdot (\xi+\eta)} d\xi d\eta,
$$
and $\{ \omega_l\}_{l\in \mathbb{Z}^d}$ is a sequence of disjoint cubes. The study on bilinear Littlewood-Paley square function has two motivations: One is  Alberto Calder\'on's conjectures on bilinear Hilbert transform; Another one is the norm inequalities of Littlewood-Paley type operators.

Our object
of investigation is the bilinear square Fourier multiplier operator
\begin{eqnarray}\label{squaremultiplier}
\begin{split}
  &\mathfrak{T}_{m}(f_1,f_2)(x)\\
  & = \biggl( \int_{0}^\infty \Big|\int_{(\mathbb{R}^n)^2}e^{2\pi ix\cdot (\xi_1 +\xi_2) }m(t\xi_1,t\xi_2) \hat{f}_{1}(\xi_1)\hat{f}_{2}(\xi_2)d\xi_1 d\xi_2\Big|^2\frac{dt}{t }\biggr)^{\frac 12}.
\end{split}
\end{eqnarray}
Let $K_t(x,y_1,y_2)
=\frac{1}{t^{2n}}\check{m}\big(\frac{x-y_1}{t},\frac{x-y_2}{t}\big)
$. Then, $\mathfrak{T}_{m}$ can be written in the form
 \begin{equation*}
\begin{split}
\mathfrak{T}_{m}(\vec{f})(x)=\biggl( \int_{0}^\infty  \Bigl|\int_{(\mathbb{R}^n)^2}K_t(x,y_1,y_2) f_{1}(y_1)f_{2}(y_2)dy_1dy_2\Bigr|^2\frac{dt}{t}\biggr)^{\frac 12},
\end{split}
\end{equation*}
The commutator of $\mathfrak{T}_{m}$ is defined by
\begin{equation*}
\begin{split}
\mathfrak{T}_{m}^{\vec{b}}(\vec{f})(x)=\sum_{i=1}^2\biggl( \int_{0}^\infty  \Bigl|\int_{(\mathbb{R}^n)^2}(b_i(x)-b_i(y))K_t(x,y_1,y_2) f_{1}(y_1)f_{2}(y_2)dy_1dy_2\Bigr|^2\frac{dt}{t}\biggr)^{\frac 12},
\end{split}
\end{equation*}
In this paper, we always assume that $m\in L^\infty((\mathbb{R}^n)^2)$ and satisfies the conditions
\begin{equation}\label{eq:2-lin-FM-5}
|\partial^\alpha m(\xi_1,\xi_2)|
\lesssim \frac{(|\xi_1|+|\xi_2|)^{-|\alpha|+\varepsilon_1}}
{(1+|\xi_1|+|\xi_2|)^{\varepsilon_1+\varepsilon_2}}
\end{equation}
for some $\varepsilon_1,\varepsilon_2>0$ and $|\alpha|\le s.$

The main results of this paper are:
\begin{theorem}\label{31}
 Let $s$ be an integer with $s\in[n+1,2n]$ and $p_0$ be a number satisfying
$2n/s\le p_0\le 2$.
 Let $p_0\leq p_1, p_2<\infty$, $1/p=1/p_1+ 1/p_2$,
 and $\vec \omega\in A_{\vec P/p_0}$.
Suppose that $m\in L^\infty((\mathbb{R}^n)^2)$ satisfies \eqref{eq:2-lin-FM-5}
 and that the bilinear square Fourier multiplier operator $\mathfrak{T}_{m}$
is bounded from $L^{q_1}\times  L^{q_2}$ into $L^{q,\infty}$, for
any $p_0<q_1,q_2$ and $1/q=1/q_1+1/q_2$.
Then the following weighted estimates hold.
\begin{enumerate}
 \item [(i)]If $p_1, p_2>p_0$, then
$||\mathfrak{T}_{m}(\vec{f})||_{L^p(\nu_{\vec{\omega}})}
\leq C ||f_1||_{L^{p_1}(\omega_1)}||f_2||_{L^{p_2}(\omega_2)}$.
\item [(ii)]If $p_0>2n/s$ and $p_1=p_0$ or $p_2=p_0$, then
 $$||\mathfrak{T}_{m}(\vec{f})||_{L^{p,\infty}(\nu_{\vec{\omega}})}\leq C
||f_1||_{L^{p_1}(\omega_1)}||f_2||_{L^{p_2}(\omega_2)}.$$
 \end{enumerate}
\end{theorem}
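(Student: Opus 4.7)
The plan is to reduce the theorem to two building blocks: a Fefferman--Stein sharp maximal inequality for $\mathfrak{T}_{m}$ controlled by the multisublinear maximal operator
$$\mathcal{M}_{p_{0}}(\vec f)(x)=\sup_{Q\ni x}\prod_{i=1}^{2}\Bigl(\frac{1}{|Q|}\int_{Q}|f_{i}(y)|^{p_{0}}\,dy\Bigr)^{1/p_{0}},$$
together with the weighted boundedness of $\mathcal{M}_{p_{0}}$ under $A_{\vec P/p_{0}}$. Concretely, the first step is to prove that for some $0<\delta<p_{0}$,
$$M^{\sharp}_{\delta}\bigl(\mathfrak{T}_{m}(\vec f)\bigr)(x) \;\lesssim\; \mathcal{M}_{p_{0}}(\vec f)(x) \qquad \text{for every } x\in\mathbb{R}^{n}.$$

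To establish this pointwise inequality I would fix a cube $Q\ni x$ and split $f_{i}=f_{i}^{0}+f_{i}^{\infty}$ with $f_{i}^{0}=f_{i}\chi_{Q^{*}}$, $Q^{*}=8Q$, so that $\mathfrak{T}_{m}(\vec f)=\sum_{\alpha_{1},\alpha_{2}\in\{0,\infty\}}\mathfrak{T}_{m}(f_{1}^{\alpha_{1}},f_{2}^{\alpha_{2}})$. The local piece is estimated on $Q$ in $L^{\delta}$-norm by Kolmogorov's inequality together with the assumed weak-type bound $L^{q_{1}}\times L^{q_{2}}\to L^{q,\infty}$ applied for any $q_{i}$ slightly above $p_{0}$; this produces precisely the factor $\mathcal{M}_{p_{0}}(\vec f)(x)$. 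For the three mixed and far pieces one needs a H\"older-type size and smoothness estimate for the vector-valued kernel $\mathcal{K}(y,\vec z)=\bigl(\int_{0}^{\infty}|K_{t}(y,z_{1},z_{2})|^{2}\,dt/t\bigr)^{1/2}$, obtained by differentiating under the $t$-integral and invoking \eqref{eq:2-lin-FM-5} with $|\alpha|\le s$: the assumption $s\ge n+1$ yields decay strictly better than $|y-z_{1}|^{-n}|y-z_{2}|^{-n}$, while $\varepsilon_{1},\varepsilon_{2}>0$ ensure that the $dt/t$-integral converges at both endpoints. Summing over the dyadic annuli $2^{k+1}Q\setminus 2^{k}Q$ in the missing variables then gives the desired domination by $\mathcal{M}_{p_{0}}(\vec f)(x)$.

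Once the sharp maximal inequality is available, assertion (i) follows by combining the Fefferman--Stein inequality (valid because $\vec\omega\in A_{\vec P/p_{0}}$ forces $\nu_{\vec\omega}\in A_{\infty}$, after a standard truncation to guarantee finiteness of the left-hand side) with a rescaled version of the Lerner--Ombrosi--P\'erez--Torres--Trujillo-Gonz\'alez theorem, which supplies
$$\mathcal{M}_{p_{0}}\colon L^{p_{1}}(\omega_{1})\times L^{p_{2}}(\omega_{2})\longrightarrow L^{p}(\nu_{\vec\omega})$$
precisely under $\vec\omega\in A_{\vec P/p_{0}}$ and $p_{i}>p_{0}$. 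For the weak-type endpoint (ii) the sharp maximal route breaks down at $p_{i}=p_{0}$, so I would instead run a direct weak-type argument: perform a Calder\'on--Zygmund decomposition of $|f_{i}|^{p_{0}}$ at level $\lambda^{p_{0}}$ relative to $\omega_{i}$, bound the good part via the assumed unweighted weak-type hypothesis, and bound the bad parts using the same kernel-smoothness estimate derived above. The hypothesis $p_{0}>2n/s$ is used precisely to guarantee that the H\"older exponent obtained from $|\alpha|\le s$ is strictly greater than what the maximal function consumes, and the weak $(p_{0},p_{0})$ endpoint for $\mathcal{M}_{p_{0}}$ closes the argument.

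The main obstacle is the very first step. Because \eqref{eq:2-lin-FM-5} is only a \emph{mild} regularity condition on the symbol and the square-function structure forces us to work with the $L^{2}(dt/t)$-valued kernel $\mathcal{K}$, the extraction of size and H\"older estimates for $\mathcal{K}$ from pointwise derivative bounds on $m$ requires a careful Littlewood--Paley style dyadic decomposition in the frequency variables. Matching the resulting H\"older exponent with the $p_{0}$-power consumed by $\mathcal{M}_{p_{0}}$ is what forces the restriction $s\in[n+1,2n]$ together with $p_{0}\ge 2n/s$, and the borderline case $p_{0}=2n/s$ is exactly the dividing line between the strong-type statement (i) and the weak-type statement (ii).
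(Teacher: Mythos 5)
Your overall architecture --- a sharp maximal function inequality $M^{\sharp}_{\delta}\bigl(\mathfrak{T}_{m}(\vec f)\bigr)\lesssim\mathcal{M}_{p_{0}}(\vec f)$, followed by the Fefferman--Stein inequality and the weighted bounds for $\mathcal{M}_{p_{0}}$ of Lerner et al./Bui--Duong --- is exactly the paper's route for the general multilinear square function (Theorems 1.6 and 1.7, proved via Lemmas 4.1--4.4). But the step you single out as the main obstacle is where your plan goes wrong, not merely where it is delicate: you propose to derive \emph{pointwise} size and H\"older estimates for the $L^{2}(dt/t)$-valued kernel $\mathcal{K}(y,\vec z)=\bigl(\int_{0}^{\infty}|K_{t}(y,z_{1},z_{2})|^{2}\,dt/t\bigr)^{1/2}$ from \eqref{eq:2-lin-FM-5}, and such pointwise estimates are in general \emph{false} under the mild hypothesis $|\alpha|\le s\le 2n$. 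For $|\alpha|\le s$ the bound \eqref{eq:2-lin-FM-5} gives $\partial^{\alpha}m$ decay of order $(|\xi|+|\eta|)^{-|\alpha|-\varepsilon_{2}}$ at infinity; since $|\alpha|+\varepsilon_{2}$ need not exceed $2n$, $\partial^{\alpha}m$ need not be integrable over $\mathbb{R}^{2n}$, so one cannot conclude any pointwise bound on $y^{\alpha}\check m(y,z)$ by Fourier inversion, and a fortiori one cannot reach a bound of the form $\mathcal{K}(y,\vec z)\lesssim(|y-z_{1}|+|y-z_{2}|)^{-2n}$ (which is the Xue--Yan condition \eqref{smooth} that the paper explicitly avoids assuming). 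This is the reason the theorem is formulated with the \emph{integral} conditions (H2) and (H3) of Definition 1.4, in which the kernel difference is taken in $L^{p_{0}'}$-average over products of dyadic annuli $S_{j_{1}}(Q)\times\cdots\times S_{j_{m}}(Q)$; such averaged bounds are attainable via the Hausdorff--Young inequality (mapping $L^{p}$ in frequency to $L^{p'}$ in space, legal precisely because $2n/s<p_{0}\le 2$ makes $\partial^{\alpha}m\in L^{p_{0}}$ on each dyadic piece) together with Minkowski's inequality to move the $dt/t$-integral past the spatial integrals. Establishing these is the content of Propositions 3.1 and 3.2, and it involves a genuinely nontrivial dyadic decomposition in $\ell$, a dilation normalization reducing to $R=2^{-\max(j,k)}$, and a careful split of the $t$-integral into $(0,(2^{\ell}|h|)^{1\pm\varepsilon}]$ and its complement to balance the competing exponents $\varepsilon_{1}$, $\varepsilon_{2}$, $s-2n/p$; none of that is captured by ``differentiating under the $t$-integral.''

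A secondary point: for assertion (ii) the paper does not run a Calder\'on--Zygmund decomposition; it stays inside the same sharp-maximal framework and simply invokes the weak-type endpoint bound for $\mathcal{M}_{p_{0}}$ when some $p_{i}=p_{0}$ (Lemma 4.1(2)), which is why Theorem 1.6(2) is dispatched ``similarly with slight modifications.'' A CZ decomposition as you describe is not obviously wrong, but it would still require the correct averaged kernel bounds (H2)/(H3) rather than pointwise ones, so the gap above propagates to your treatment of (ii) as well. Finally, the hypothesis $p_{0}>2n/s$ in (ii) is used so that the conditions (H2)/(H3) can be verified with $p_{0}$ itself as the exponent, whereas in case $p_{0}=2n/s$ for (i) the paper uses the openness of the $A_{\vec P/p_{0}}$ condition to pass to $\tilde p_{0}>2n/s$; your account attributes the restriction to ``matching H\"older exponents,'' which misstates where the constraint actually enters.
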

\begin{theorem}\label{34}
 Let $s$, $p_0, p_1, p_2, p$,
$\vec \omega$, $m$ and $\mathfrak{T}_{m}$ be the same as in Theorem \ref{31}.
Then the following weighted estimates hold for the commutators of $\mathfrak{T}_{m}(\vec{f})$.
\begin{enumerate}
 \item [(i)]If $p_1, p_2>p_0$, then for any $\vec{b}\in BMO^2$, it holds that
$$||\mathfrak{T}_{m}^{\vec{b}}(\vec{f})||_{L^p(\nu_{\vec{\omega}})}
\leq C||\vec{b}||_{BMO}
||f_1||_{L^{p_1}(\omega_1)}||f_2||_{L^{p_2}(\omega_2)},$$
where $||\vec{b}||_{BMO}=\max_j ||b_j||_{BMO}$.
\item [(ii)]Let $\vec{\omega}\in A_{\vec{1}}$ and $\vec{b}\in BMO^m.$ Then, there exists a
constant $C $ (depending on $\vec{b}$) such that
$$
\nu_{\vec{\omega}}\bigl( \bigl\{x\in \mathbb{R}^n: |\mathfrak{T}_{m}^{\vec{b}}(\vec{f})(x)|>t^2
\bigr\}\bigr)
\leq C \prod_{j=1}^2 \biggl( \int_{\mathbb{R}^n}
\Phi \Bigl(\frac{|f_j(x)|}{t}\Bigr)\omega_j(x)\biggr)^{1/ 2},
$$
where $\Phi(t)=t^{p_0}(1+\log^+ t)^{p_0}$.
 \end{enumerate}
\end{theorem}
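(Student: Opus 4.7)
The plan is to handle both parts of Theorem~\ref{34} by adapting the P\'erez--Torres--Pradolini pointwise sharp-maximal-function method for bilinear commutators to the $L^{p_{0}}$-scale forced by the regularity hypothesis \eqref{eq:2-lin-FM-5}, using Theorem~\ref{31} as the base case and running an induction on the number of BMO factors that appear simultaneously.

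For part (i), the central step is to establish a pointwise inequality of the form
\begin{equation*}
M^{\#}_{\delta}\bigl(\mathfrak{T}_{m}^{\vec{b}}(\vec{f})\bigr)(x)
\le C\|\vec{b}\|_{BMO}\Bigl(M_{r}\bigl(\mathfrak{T}_{m}(\vec{f})\bigr)(x)
+ \mathcal{M}_{L(\log L)^{p_{0}}}(\vec{f})(x)
+ \sum_{j=1}^{2} M_{r}\bigl(\mathfrak{T}_{m,j}(\vec{f})\bigr)(x)\Bigr)
\end{equation*}
for some $0<\delta<1$ and for any $r>p_{0}$ slightly larger than $p_{0}$, where $\mathcal{M}_{L(\log L)^{p_{0}}}$ denotes the bilinear Orlicz maximal operator attached to the Young function $s^{p_{0}}(1+\log^{+}s)^{p_{0}}$ and $\mathfrak{T}_{m,j}$ denotes the commutator of $\mathfrak{T}_{m}$ with $b_{j}$ alone. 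This is proved in the standard way: for $x\in Q$ subtract the averages $(b_{j})_{Q}$ inside the commutator, split each $f_{j}=f_{j}\chi_{Q^{*}}+f_{j}\chi_{(Q^{*})^{c}}$, control the local piece by Kolmogorov's inequality combined with the weak-type hypothesis of Theorem~\ref{31}, and extract decay from the far piece via \eqref{eq:2-lin-FM-5} inside the $L^{2}(dt/t)$-integration that defines the square function. The strong conclusion then follows from two applications of the Fefferman--Stein inequality, Theorem~\ref{31} applied to $\mathfrak{T}_{m}$ and (inductively) to $\mathfrak{T}_{m,j}$, and the weighted bounds for $\mathcal{M}_{L(\log L)^{p_{0}}}$ known to hold under $A_{\vec{P}/p_{0}}$.

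For part (ii), normalize $t=1$ and perform a Calder\'on--Zygmund decomposition of each $f_{j}$ at height $1$ adapted to $\Phi$: $f_{j}=g_{j}+\sum_{k}b_{j,k}$ with $\{Q_{j,k}\}_{k}$ pairwise disjoint, each $b_{j,k}$ supported on $Q_{j,k}$ with mean zero and $\frac{1}{|Q_{j,k}|}\int_{Q_{j,k}}\Phi(|f_{j}|)\sim 1$, and $\|g_{j}\|_{\infty}\lesssim 1$. Expanding $\mathfrak{T}_{m}^{\vec{b}}(\vec{f})$ yields four pieces indexed by good/bad in each slot. On the exceptional set $E^{*}=\bigcup_{j,k}Q_{j,k}^{*}$ the $A_{\vec{1}}$ condition combined with the stopping estimates gives $\nu_{\vec{\omega}}(E^{*})\lesssim\prod_{j=1}^{2}\bigl(\int_{\mathbb{R}^{n}}\Phi(|f_{j}|)\omega_{j}\bigr)^{1/2}$. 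Off $E^{*}$ the good--good piece is absorbed by the strong $L^{p_{0}}\times L^{p_{0}}\to L^{p_{0}/2,\infty}$ bound delivered by part (i); each piece containing at least one $b_{j,k}$ uses the mean-zero cancellation of $b_{j,k}$ together with the kernel regularity \eqref{eq:2-lin-FM-5} integrated against $dt/t$ to produce a decay of the form $\ell(Q_{j,k})^{\varepsilon_{1}}/|x-c_{Q_{j,k}}|^{n+\varepsilon_{1}}$ which is summable in $k$ off the dilates $Q_{j,k}^{*}$.

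The main obstacle is the compatibility between the $\Phi$-averages on the stopping cubes and the logarithmic BMO oscillations $b_{i}(x)-(b_{i})_{Q_{j,k}}$ appearing in the pieces with at least one $b_{j,k}$: these oscillations must be paired via a generalized H\"older inequality of type $\|fg\|_{L^{1}(Q)}\le C\|f\|_{L(\log L)^{p_{0}},Q}\|g\|_{\exp L^{1/p_{0}},Q}$ with the $\Phi$-controlled averages. This is precisely where the ``essentially improved'' basic lemmas alluded to in the abstract become indispensable, since the classical $L\log L$ versus $\exp L$ duality does not carry the $p_{0}$-power needed to absorb the extra decay produced by the square-function $L^{2}(dt/t)$-integration. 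Granting the refined Orlicz inequalities, summing the four pieces and reversing the normalization $t=1$ yields the advertised bound with $t^{2}$ on the level set and the geometric-mean structure $\prod_{j=1}^{2}(\cdot)^{1/2}$.
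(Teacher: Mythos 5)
Your plan for part (i) is structurally close to the paper's, but for part (ii) you take a genuinely different route that, as described, contains a gap.

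For part (i), the paper first reduces to the abstract multilinear square-function framework: it verifies via Propositions \ref{prop:BuiDuong} and \ref{prop:BuiDuong2} that the kernel $K_t(x,y_1,y_2)=t^{-2n}\check m((x-y_1)/t,(x-y_2)/t)$ satisfies the size and regularity conditions (H2), (H3), and then invokes Theorem \ref{2}, whose engine is the pointwise bound (Lemma \ref{lem31})
$M^{\sharp}_\delta\big(T_{\vec b}\vec f\big)(x)\lesssim \|\vec b\|_{BMO}\big(\mathcal M_{q_0}(\vec f)(x)+M_\varepsilon(T\vec f)(x)\big)$
with some $q_0>p_0$. Your proposed RHS carries the extra terms $\sum_j M_r(\mathfrak T_{m,j}(\vec f))$; since $\mathfrak T_m^{\vec b}=\sum_j\mathfrak T_{m,j}$ is already a sum of \emph{single} commutators (no iterated composition is involved in the paper's definition), bounding $\mathfrak T_m^{\vec b}$ in terms of $M_r(\mathfrak T_{m,j})$ is circular unless you first dispose of each $\mathfrak T_{m,j}$ separately. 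The fix is simply to drop those terms and run the estimate on each summand $\mathfrak T_{m,j}$, mirroring the paper's Lemma \ref{lem31} which never needs the commutator on the right-hand side. The replacement of $\mathcal M_{q_0}$ by $\mathcal M_{L(\log L)^{p_0}}$ is harmless since both are controlled under $A_{\vec P/p_0}$ by the openness of the weight class, but it is not the route the paper takes in the strong-type part.

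For part (ii) the approaches diverge sharply. The paper does \emph{not} perform a Calder\'on--Zygmund decomposition: it proves the $\Phi$-weak endpoint by a good-$\lambda$/sharp-maximal argument. Concretely, Lemma \ref{4} transfers the level-set estimate for $T_{\vec b}\vec f$ to one for the Orlicz maximal operator $\mathcal M_\Phi^{(i)}$ via the Fefferman--Stein inequality (Lemma \ref{21}, applied at the $\Phi$-scale) together with the improved sharp-maximal bound in terms of $\mathcal M_\Phi^{(i)}$, and Lemma \ref{5} delivers the weak-type $\Phi$ estimate for $\mathcal M_\Phi^{(i)}$ by a Vitali covering argument under $A_{\vec 1}$. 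Your CZ-based alternative has a concrete gap: you propose to absorb the good--good piece by ``the strong $L^{p_0}\times L^{p_0}\to L^{p_0/2,\infty}$ bound delivered by part (i).'' Part (i) gives strong estimates only for $p_1,p_2>p_0$, and no weak $(p_0,p_0,p_0/2)$ bound for the \emph{commutator} at the endpoint is available (for commutators this endpoint genuinely fails; the $L\log L$ substitute is exactly what one is trying to prove). The correct handling of the good part in a CZ scheme would instead exploit $\|g_j\|_\infty\lesssim1$ to raise the exponent to some $q>p_0$ and use part (i) at the level $q$, converting back to $\int\Phi(|f_j|)\omega_j$; as written, your argument for that piece is not valid. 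The bad-piece and generalized H\"older/Orlicz-duality difficulties you flag are real, but the paper sidesteps all of them by the $M^\sharp$-route, and you should also note that the multiplier-specific content (namely that condition \eqref{eq:2-lin-FM-5} implies (H2) and (H3)) is itself a nontrivial step, carried out in Propositions \ref{prop:BuiDuong} and \ref{prop:BuiDuong2}, which your plan only gestures at.
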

The above results still hold for $m$-linear square Fourier multiplier operators. An example will be given in section $\ref{Sec-2}$, which shows that the
assumption that $\mathfrak{T}_{m}$ is bounded from $L^{q_1}\times  L^{q_2}$
into $L^{q,\infty}$ in Theorems \ref{31} and \ref{34} is reasonable.
The proofs of Theorems \ref{31} and \ref{34} will be based on the results of
multilinear square functions obtained in the next subsection.
\subsection{Results on multilinear square functions}
In order to state more known results, we need to introduce some definitions.
\begin{definition}[\textbf{Multilinear operator and multilinear
square function}]
Let $K$ be a locally integrable function defined away from the diagonal
$x=y_1=\cdots =y_m$ in $(\mathbb{R}^n)^{m+1}$ and $K_t={t^{-mn}}K(\cdot/t)$.
Then, the multilinear operator $\mathcal{T}$ and multilinear square function
$T$ are defined by
\begin{equation}\label{eq1}
\mathcal{ T}\vec{f}(x)=\int_{(\mathbb{R}^n)^m}K(x,y_1,\cdots ,y_m)f_1(y_1)
\cdots f_m(y_m)dy_1\cdots dy_m
\end{equation}
and
 \begin{equation}\label{squarefunction}
\begin{split}
T(\vec{f})(x)
=\bigg( \int_{0}^\infty \Big|\int_{(\mathbb{R}^n)^m}K_t(x,y_1,\dots,y_m)
\prod_{j=1}^mf_{j}(y_j)dy_1\dots dy_m\Big|^2\frac{dt}{t}\bigg)^{\frac 12},
\end{split}
\end{equation}
where $\vec{f}=(f_1,\dots,f_m)\in \mathcal{S}(\mathbb{R}^n)\times \cdots
\times \mathcal{S}(\mathbb{R}^n)$ and all
$x\notin \bigcap_{j=1}^m \texttt{supp} f_j$.\end{definition}
\begin{definition}[\textbf{Bui and Duong's condition}, \cite{bui}]
Let $S_j(Q)=2^jQ\setminus 2^{j-1}Q$ if $j\geq 1,$ and $S_0(Q)=Q$.
Then, assume that the following two conditions hold
\begin{enumerate}\item
[(h1)] For all $1\leq p_0\leq q_1,q_2,\dots,q_m<\infty$ and $0<q<\infty$ with
$1/q_1+\cdots+1/q_m=1/q,$ $\mathcal{T}$ maps
$L^{q_1}\times\cdots \times L^{q_m}$ into $L^{q,\infty}.$
\item
[(h2)] There exists $\delta>n/p_0$ so that for the conjugate exponent $p_0'$ of
$p_0,$ one has
\begin{eqnarray*}
&&\bigg(\int_{S_{j_m}(Q)}\cdots
\int_{S_{j_1}(Q)}\big| K(z,\vec{y})-K(x,\vec{y})\big|^{p'_0}d\vec{y}
\bigg)^{1/p'_0}\leq  C \frac{|x-z|^{m(\delta-n/p_0)}}{|Q|^{m\delta/n}}
2^{-m\delta j_0}
\end{eqnarray*}
for all ball $Q$, all $x,z\in 1/2 Q$ and $(j_1,\dots,j_m)\neq (0,\dots,0),$
where $j_0=\max_{k=1,\dots,m}\{j_k\}$.\end{enumerate}
\end{definition}
\begin{definition}[\textbf{Xue and Yan's condition}, \cite{xue}]\label{1.3}
\item For any $t\in(0,\infty),$  we assume that $K_t(x,y_1,\dots, y_m)$
satisfies the following conditions: there is a positive constant $ A>0$,
such that
\begin{equation}\label{eq2}
 \begin{split}
  \bigg( \int_{0}^\infty |K_t(z,y_1,\cdots,y_m)&-K_t(x,y_1,\cdots,y_m)|^2
  \frac{dt}{t}\bigg)^{\frac 12}
  \le\frac{A|z-x|^\gamma}{(\sum_{j=1}^m|x-y_j|)^{mn+\gamma}} ,
\end{split}
\end{equation}
whenever $|z-x|\leq \frac{1}{2} \max_{j=1}^m{|x-y_j|}$; and
\begin{equation}\label{eq3}
\begin{split}
 \bigg( \int_{0}^\infty |K_t(x,\vec{y})&-K_t(x,y_1,\dots,y'_j,\dots,y_m)|^2
 \frac{dt}{t}\bigg)^{\frac 12}
 \leq \frac{A|y_j-y_{j}'|^\gamma}{(\sum_{j=1}^m|x-y_j|)^{mn+\gamma}},
\end{split}
\end{equation}
whenever $|y_j-y_j'|\leq \frac{1}{2} \max_{j=1}^m{|x-y_j|}$;
 \begin{equation}\label{smooth}
\begin{split}
\bigg( \int_{0}^\infty |K_t(x,y_1,\cdots,y_m)|^2 \frac{dt}{t}\bigg)^{\frac 12}
\leq \frac{A}{(\sum_{j=1}^m|x-y_j|)^{mn}}.
\end{split}
\end{equation}
\end{definition}

In 2013, Bui and Duong \cite{bui} studied the boundedness of $\mathcal{T}$ on
product of weighted Lebesgue spaces with the kernel
satisfies the more weaker regularity conditions (h1) and (h2).
It should be pointed out that, under the assumptions (h1) and (h2),
the multilinear operator $\mathcal{T}$ defined in \eqref{eq1} may not fall
under the scope of the theorem of multilinear
Calder\'{o}n-Zygmund singular integral operators.
In 2015, Xue and Yan \cite{xue} established the multiple-weighted norm
inequalities for multilinear square function $T$ with kernel $K_t$ satisfies
the conditions in Definition \ref{1.3}.

Motivated by the above two works, we introduce the following new condition to
study the boundedness of multilinear square function and the associated
commutators.
\begin{definition}[\textbf{New condition}]\label{1.444}Let $1\leq p_0<\infty.$
Let $S_j(Q)=2^jQ\setminus 2^{j-1}Q$ if $j\geq 1,$ and
$S_0(Q)=Q$. Then, assume that\begin{enumerate}
\item [(H1)] For all $p_0\leq q_1,q_2,\dots,q_m<\infty$ and $0<q<\infty$ with
$1/q_1+\cdots+1/q_m=1/q,$ $T$ maps $L^{q_1}\times\cdots \times L^{q_m}$ into
$L^{q,\infty}.$
\item
[(H2)] There exists $\delta>n/p_0$ so that for the conjugate exponent $p_0'$ of
$p_0,$ one has
\begin{multline*}
\bigg(\int_{S_{j_m}(Q)}\cdots  \int_{S_{j_1}(Q)}
\bigg(\int_0^\infty\big| K_t(z,\vec{y})-K_t(x,\vec{y})\big|^2\frac{dt}{t}
\bigg)^{p'_0/ 2}d\vec{y}\bigg)^{1/p'_0}
\\
\leq  C \frac{|x-z|^{m(\delta-n/p_0)}}{|Q|^{m\delta/n}}2^{-m\delta j_0}
\end{multline*}
for all balls $Q$, all $x,z\in 1/2 Q$ and $(j_1,\dots,j_m)\neq (0,\dots,0),$
where $j_0=\max_{k=1,\dots,m}\{j_k\}$.
\item
[(H3)] There exists some positive constant $C>0$   such that
\begin{equation*}\label{smooth3-2}
\biggl(
\int_{S_{j_m}(Q)}\cdots  \int_{S_{j_1}(Q)}
\biggl( \int_{0}^\infty |K_t(x,y_1,\cdots,y_m)|^2 \frac{dt}{t}
\biggr)^{\frac {p_0'}2}d\vec y\biggr)^{1/p_0}
\le C\frac{2^{-mnj_0/ p_0}}{|Q|^{m/ p_0}}
\end{equation*}
for all balls $Q$ with center at $x$ and $(j_1,\dots,j_m)\neq (0,\dots,0),$
where $j_0=\max_{k=1,\dots,m}\{j_k\}$.\end{enumerate}
\end{definition}
\begin{definition}[\textbf{Commutators of multilinear square operator}]
The commutators of multilinear square operator $T$ with BMO functions
$\vec b=(b_1,b_2,\dots, b_m)$ are defined by
 \begin{equation}
\begin{split}
T_{\vec{b}}(\vec{f})(x)
=\sum_{i=1}^m\bigg( \int_{0}^\infty \Big|
\int_{(\mathbb{R}^n)^m}(b_{i}(x)-b_i(y_i))K_t(x,\vec{y})
\prod_{j=1}^mf_{j}(y_j)dy_1 \dots dy_m\Big|^2\frac{dt}{t}\bigg)^{\frac 12},
\end{split}
\end{equation}
for any $\vec{f}=(f_1,\dots,f_m)\in \mathcal{S}(\mathbb{R}^n)\times \cdots
\times \mathcal{S}(\mathbb{R}^n)$ and all $x\notin \bigcap_{j=1}^m
\texttt{supp} f_j.$
\end{definition}

We obtain the following weighted estimates.
\begin{theorem}\label{1}
 Let $T$ be the
multilinear square function with a kernel satisfying conditions (H1), (H2) and
(H3) for some $1\leq p_0<\infty.$
Then, for any $p_0\le p_1,\dots,p_m<\infty$, $1/p=1/p_1+\cdots +1/p_m$ and
$\vec{\omega}\in A_{\vec{P}/p_0}$, the following weighted estimates hold.
 \begin{enumerate}
   \item If there is no $p_i=p_0$, then
   $\|T(\vec{f}) \|_{L^{p}(\nu_{\vec{w}})}
   \leq C \prod_{i=1}^{m} \|f_i\|_{L^{p_i}(w_i)} .$\\
   \item If there is a $p_i=p_0$, then
   $\|T(\vec{f}) \|_{L^{p,\infty}(\nu_{\vec{w}})}
   \leq C \prod_{i=1}^{m} \|f_i\|_{L^{p_i}(w_i)}. $
 \end{enumerate}
\end{theorem}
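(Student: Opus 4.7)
The plan is to derive part (1) from a pointwise sharp maximal function estimate and to obtain part (2) by re-running the same argument at the weak-type endpoint of the underlying multilinear maximal operator. The central claim is the inequality
\begin{equation*}
M^{\sharp}_{\eta}\bigl(T(\vec f)\bigr)(x)\le C\,\mathcal{M}_{L^{p_0}}(\vec f)(x),\qquad 0<\eta<p_0/m,
\end{equation*}
where $M^{\sharp}_{\eta}(g)=M^{\sharp}(|g|^{\eta})^{1/\eta}$ and
\begin{equation*}
\mathcal{M}_{L^{p_0}}(\vec f)(x)=\sup_{Q\ni x}\prod_{i=1}^{m}\left(\frac{1}{|Q|}\int_{Q}|f_i|^{p_0}\right)^{1/p_0}.
\end{equation*}
Granting this, part (1) follows from the Fefferman--Stein inequality $\|g\|_{L^{p}(\nu_{\vec\omega})}\lesssim \|M^{\sharp}_{\eta}g\|_{L^{p}(\nu_{\vec\omega})}$ (valid since $\vec\omega\in A_{\vec P/p_0}$ implies $\nu_{\vec\omega}\in A_\infty$) combined with the strong boundedness of $\mathcal{M}_{L^{p_0}}$ from $L^{p_1}(\omega_1)\times\cdots\times L^{p_m}(\omega_m)$ into $L^{p}(\nu_{\vec\omega})$ when every $p_i>p_0$, which is the known multilinear $A_{\vec P/p_0}$ characterization.

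To prove the pointwise estimate I fix a ball $Q\ni x$ with center $x_Q$, set $Q^{*}=8Q$, and split each $f_i=f_i^{0}+f_i^{\infty}$ with $f_i^{0}=f_i\chi_{Q^{*}}$. Since $K_t*\vec f(z)=\sum_{\alpha\in\{0,\infty\}^{m}}K_t*\vec f^{\,\alpha}(z)$, Minkowski's inequality in $L^{2}(dt/t)$ yields
\begin{equation*}
\bigl|T(\vec f)(z)-c_{Q}\bigr|\le T(\vec f^{\,0})(z)+\sum_{\alpha\ne (0,\dots,0),\,(\infty,\dots,\infty)}T(\vec f^{\,\alpha})(z)+\Delta_{\infty}(z),
\end{equation*}
where $c_{Q}:=T(\vec f^{\,(\infty,\dots,\infty)})(x_Q)$ and $\Delta_{\infty}(z)=\bigl\|K_t*\vec f^{\,(\infty,\dots,\infty)}(z)-K_t*\vec f^{\,(\infty,\dots,\infty)}(x_Q)\bigr\|_{L^{2}(dt/t)}$. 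For the local piece $T(\vec f^{\,0})$, Kolmogorov's inequality combined with (H1) at the boundary choice $q_1=\cdots=q_m=p_0$ gives
\begin{equation*}
\left(\frac{1}{|Q|}\int_{Q}T(\vec f^{\,0})(z)^{\eta}dz\right)^{1/\eta}\lesssim \prod_{i=1}^{m}\left(\frac{1}{|Q^{*}|}\int_{Q^{*}}|f_i|^{p_0}\right)^{1/p_{0}}\le C\,\mathcal{M}_{L^{p_0}}(\vec f)(x).
\end{equation*}
For each nontrivial mixed piece I slice the global integrations over $(Q^{*})^{c}$ into dyadic annuli $S_{j_i}(Q^{*})$, apply Minkowski in $L^{2}(dt/t)$, and use (H3) jointly with H\"older's inequality to reach a dyadic bound of the form
\begin{equation*}
C\sum_{\vec j}2^{-mnj_0/p_0}\prod_{i=1}^{m}\left(\frac{1}{|2^{j_i+3}Q|}\int_{2^{j_i+3}Q}|f_i|^{p_0}\right)^{1/p_0},
\end{equation*}
where $j_{0}=\max_{k}j_{k}$ and $j_i=0$ whenever $\alpha_i=0$; the geometric decay in $j_0$ defeats the growth $\prod_i 2^{nj_i}\le 2^{mnj_0}$ implicit in the enlargement of balls. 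For $\Delta_{\infty}(z)$ the same annular slicing is performed but with (H2) in place of (H3), producing the factor $|z-x_Q|^{m(\delta-n/p_0)}/|Q|^{m\delta/n}\lesssim 1$ together with $2^{-m\delta j_0}$, whose summability relies precisely on the hypothesis $\delta>n/p_0$.

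The principal technical obstacle is the bookkeeping of the multi-indexed sums over $\alpha$ and $\vec j$: one must verify that the outer $L^{p_0'}$ norm in (H2) and (H3) interacts correctly with H\"older's inequality in the local and global variables so that the effective geometric decay depends only on $j_0$ and not on the individual $j_i$, and also confirm that (H3) (nominally stated with $Q$ centered at $x$) transfers to all $z\in Q$ by covering $Q$ with a ball centered at $z$ of comparable radius. Once the pointwise estimate is in hand, part (2) follows by replicating the argument using the weak-type boundedness of $\mathcal{M}_{L^{p_0}}$ from $L^{p_1}(\omega_1)\times\cdots\times L^{p_m}(\omega_m)$ into $L^{p,\infty}(\nu_{\vec\omega})$ when some $p_i=p_0$, combined with the weak-type Fefferman--Stein inequality for $\nu_{\vec\omega}\in A_\infty$. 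An alternative route for (2), bypassing the sharp maximal function, is to carry out the multilinear Calder\'on--Zygmund decomposition of Lerner--Ombrosi--P\'erez--Torres--Trujillo-Gonz\'alez directly, using (H1) to handle the good-part estimate and (H2)--(H3) to control the tails after truncation.
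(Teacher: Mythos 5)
Your overall architecture — a pointwise sharp maximal estimate $M^{\sharp}_{\eta}(T\vec f)\lesssim \mathcal{M}_{p_0}\vec f$, followed by the Fefferman--Stein inequality and Lemma~\ref{22} for $\mathcal{M}_{p_0}$ — is exactly the paper's (Lemmas~\ref{21}, \ref{22}, \ref{23}, \ref{22-2}). However, your version of the pointwise lemma has a genuine gap in the treatment of the mixed terms $\vec\alpha\neq(0,\dots,0),(\infty,\dots,\infty)$.

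You choose $c_Q=T(\vec f^{(\infty,\dots,\infty)})(x_Q)$ and propose to bound the mixed pieces $T(\vec f^{\vec\alpha})(z)$ directly via (H3), without any cancellation. But (H3) gives exactly the borderline rate: after Minkowski in $L^2(dt/t)$, H\"older in $\vec y$, and (H3), one obtains
\begin{equation*}
T(\vec f^{\vec\alpha})(z)\lesssim\sum_{\vec j}\frac{2^{-mnj_0/p_0}}{|Q|^{m/p_0}}\prod_{i=1}^{m}\Bigl(\int_{S_{j_i}(Q^*)}|f_i|^{p_0}\Bigr)^{1/p_0},
\end{equation*}
and to turn the right-hand side into the multilinear average $\mathcal{M}_{p_0}(\vec f)(x)$ you must enlarge every $2^{j_i}Q^*$ to the common dilate $2^{j_0}Q^*$, which costs a factor $|2^{j_0}Q^*|^{m/p_0}/|Q|^{m/p_0}\approx 2^{mnj_0/p_0}$. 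This \emph{exactly cancels} the factor $2^{-mnj_0/p_0}$ coming from (H3), so the dyadic sum over $\vec j$ diverges. (Your displayed dyadic bound is also missing the factor $\prod_i 2^{j_in/p_0}$ arising when passing from $\int_{S_{j_i}}|f_i|^{p_0}$ to the normalized average, so the claim that ``the geometric decay in $j_0$ defeats the growth $\prod_i 2^{nj_i}$'' is not correct --- after the cancellation there is no net decay.) If instead one avoids enlarging to a common cube, the sum does converge but yields a bound by the product of one-variable maximal operators $\prod_i M_{p_0}(f_i)(x)$, which is strictly larger than $\mathcal{M}_{p_0}(\vec f)(x)$ and does not enjoy the $A_{\vec P/p_0}$ weighted bounds of Lemma~\ref{22}.

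The paper avoids this by a different choice of $c_Q$: it sets $c_{Q,t}=\sum_{\vec\alpha\neq\vec 0}\int K_t(x,\vec y)\prod f_j^{\alpha_j}d\vec y$ and $c_Q=\|c_{Q,t}\|_{L^2(dt/t)}$, so that \emph{every} nontrivial piece, mixed or not, appears as a difference $K_t(z,\vec y)-K_t(x,\vec y)$ and is estimated with the smoothness condition (H2). The point is that (H2) carries the margin $\delta>n/p_0$: its factor $2^{-m\delta j_0}$ produces, after cube enlargement, net decay $2^{-mj_0(\delta-n/p_0)}$, so the sum over $\vec j$ is finite. Condition (H3) is used in the paper only to show $c_Q<\infty$ (and in Lemma~\ref{22-2}), not to sum dyadic annuli in the sharp maximal estimate. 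To repair your argument you should mirror this: include the mixed $\vec\alpha$'s in $c_Q$ and estimate all $\vec\alpha\ne\vec 0$ via the kernel difference and (H2), rather than the kernel size and (H3).
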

As for the commutators of $T$, we obtain the following weighted  estimates.
\begin{theorem}\label{2}
 Let $T$ be the
multilinear square function with a kernel satisfying conditions (H1), (H2) and (H3) for some $1\leq p_0<\infty$. Let $\vec{b}\in BMO^m$.
Then, for any $p_0<p_1,\dots,p_m<\infty$, $1/p=1/p_1+\cdots +1/p_m$ and
$\vec{\omega}\in A_{\vec{P}/p_0}$,
we have
$$||T_{\vec{b}}\vec{f}||_{L^p(\nu_{\vec{\omega}})}\leq C||\vec{b}||_{BMO} \prod
\limits_{i=1}^{m}||f_i||_{L^{p_i}(\omega_i)},$$
where $||\vec{b}||_{BMO}=\max_j ||b_j||_{BMO}.$\end{theorem}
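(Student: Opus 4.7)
The plan is to follow the by-now standard Pérez--Torres strategy for multilinear commutators, but adapted to the $p_0$-scale dictated by conditions (H1)--(H3). By the sum structure of $T_{\vec b}$ it suffices to bound a single summand $T_{\vec b,i}$, so I fix $i$ and write $b=b_i$. The heart of the argument will be a pointwise sharp maximal function estimate of the form
\begin{equation*}
M^{\#}_{\delta}\bigl(T_{\vec b,i}\vec f\bigr)(x)
\;\leq\; C\,\|b\|_{BMO}\,
\Bigl(\mathcal{M}_{L(\log L)^{p_0}}(\vec f)(x)+M_{r}(T\vec f)(x)\Bigr),
\end{equation*}
for some $0<\delta<1/m$ and some $r>p_0$ close to $p_0$, where $\mathcal{M}_{L(\log L)^{p_0}}$ is the multilinear Orlicz maximal operator built from $\Phi(t)=t^{p_0}(1+\log^+ t)^{p_0}$. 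Once this estimate is available, the theorem follows by applying the weighted Fefferman--Stein inequality (legitimate because $\vec\omega\in A_{\vec P/p_0}$ forces $\nu_{\vec\omega}\in A_{\infty}$), then invoking Theorem \ref{1} to control the $M_r(T\vec f)$ term and the multi-weighted boundedness of $\mathcal{M}_{L(\log L)^{p_0}}$ on $A_{\vec P/p_0}$ to control the Orlicz maximal term.

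To establish the sharp maximal estimate, I fix a ball $Q$, write $\lambda=b_{Q^{*}}$ with $Q^{*}=3Q$, and use the identity $b(x)-b(y_i)=(b(x)-\lambda)-(b(y_i)-\lambda)$ inside the $L^{2}(dt/t)$ norm to split $T_{\vec b,i}\vec f$ into $|b(x)-\lambda|\,T\vec f(x)$ plus $T(f_{1},\ldots,(b-\lambda)f_i,\ldots,f_m)(x)$. The first piece is handled by subtracting the constant $c_Q=|b_Q-\lambda|\,(T\vec f)_Q$ and using John--Nirenberg together with Hölder/Kolmogorov, producing the $M_r(T\vec f)$ factor. For the second piece I split each $f_j=f_j^{0}+f_j^{\infty}$ with $f_j^{0}=f_j\chi_{Q^{*}}$, expand the $m$-linear operator into $2^{m}$ terms indexed by $\alpha\in\{0,\infty\}^{m}$, and treat each one separately. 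The fully local piece $\alpha=\vec 0$ is handled by Kolmogorov's inequality at exponent $\delta$, the weak-type bound (H1), and an Orlicz--Hölder inequality on $(b-\lambda)f_i$ yielding the required $\prod_j\|f_j\|_{L(\log L)^{p_0},Q^{*}}$. The mixed and fully global terms are the serious work: Minkowski pulls the $L^{2}(dt/t)$ norm inside the $\vec y$-integral, Hölder at exponent $p_0'$ is applied on each annulus $S_{j_k}(Q)$, and the kernel estimate (H2) (or (H3) for the fully global piece) is combined with John--Nirenberg to control the factors $\|b-\lambda\|_{L^{\cdot},S_{j_i}(Q)}$; the resulting geometric factors $2^{-m\delta j_{0}}$ with $\delta>n/p_0$ sum against the at-most-polynomial growth in $j_0$ coming from $BMO$.

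The main obstacle is precisely the mixed global terms in the sharp maximal estimate. Unlike the classical Calderón--Zygmund setting of \cite{bui}, condition (H2) here is stated only in an $L^{p_0'}$-integral sense for the $L^{2}(dt/t)$-norm of the kernel difference, so there is no pointwise replacement available; the estimate has to be carried out in dual norm throughout, which forces a careful bookkeeping of the Hölder exponents so that the product of John--Nirenberg factors (from $b-\lambda$ on the annuli), the $L^{p_0}$-normalized pieces of $f_j$ on annuli, and the geometric decay from (H2) all combine to yield exactly $\mathcal{M}_{L(\log L)^{p_0}}(\vec f)(x)$. Everything after the sharp maximal estimate is routine: Fefferman--Stein in weighted $L^{p}$, the weighted bound from Theorem \ref{1}, and the known weighted bound for the multilinear Orlicz maximal on $A_{\vec P/p_0}$ weights, which together close the argument.
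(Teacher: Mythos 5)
Your strategy matches the paper's — a Pérez--Torres pointwise sharp maximal inequality for the commutator followed by weighted Fefferman--Stein — but you reach for the sharper Orlicz version of the key pointwise lemma. The paper's proof of Theorem \ref{2} uses instead the power-type estimate (Lemma \ref{lem31}): $M^{\sharp}_{\delta}(T_{\vec b}\vec f)(x)\lesssim \|\vec b\|_{BMO}\bigl(\mathcal{M}_{q_0}(\vec f)(x)+M_\varepsilon(T\vec f)(x)\bigr)$ for some $q_0>p_0$ and small $\varepsilon$. This avoids Orlicz duality entirely: the BMO factor $b-b_{Q^*}$ is simply absorbed by H\"older using the gap between $q_0$ and $p_0$, and the weighted boundedness of $\mathcal{M}_{q_0}$ then follows from the openness of the class $A_{\vec P/p_0}$ (there exists $q_0>p_0$ with $\vec\omega\in A_{\vec P/q_0}$). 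Your Orlicz route is the one the paper reserves for the endpoint Theorem \ref{3}; it works for Theorem \ref{2} too, since $\mathcal{M}_{L(\log L)^{p_0}}\lesssim\mathcal{M}_{q_0}$ for $q_0>p_0$, so the same openness argument closes it — but it is heavier machinery than necessary here and requires you to invoke (or reprove) the weighted bound for the Orlicz maximal on the $p_0$-scale.

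There is, however, one genuine index error: you take $M_r(T\vec f)$ with $r>p_0$. This will not work in general. The Fefferman--Stein step needs $\delta<r$ small enough that both $M_\delta$ and $M_r$ are bounded on $L^p(\nu_{\vec\omega})$, and since $p$ can be arbitrarily close to $p_0/m$ (all $p_i$ close to $p_0$), one can have $p<p_0<r$, in which case $M_r$ is not bounded on $L^p(\nu_{\vec\omega})$. The correct choice is $0<\delta<r<\min\{1,p_0/m\}$ (as in the paper's Lemma \ref{lem31}); the large exponent $q_0>p_0$ must go on the maximal function acting on $\vec f$, not on the auxiliary Hardy--Littlewood maximal acting on $T\vec f$.
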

\begin{theorem}\label{3}
 Let $T$ be the
multilinear square function with a kernel satisfying conditions (H1), (H2) and (H3) for some $1\leq p_0<\infty$. Let $\vec{b}\in BMO^m$. Let $\vec{\omega}\in A_{\vec{1}}$ and $\vec{b}\in BMO^m.$ Then, there exists a
constant $C $ (depending on $\vec{b}$) such that $$
\nu_{\vec{\omega}}\bigl( \bigl\{x\in \mathbb{R}^n: |T_{\vec{b}}\vec{f}(x)|>t^m
\bigr\}\bigr)
\leq C \prod_{j=1}^m \biggl( \int_{\mathbb{R}^n}
\Phi \Bigl(\frac{|f_j(x)|}{t}\Bigr)\omega_j(x)\biggr)^{1/ m},
$$
where $\Phi(t)=t^{p_0}(1+\log^+ t)^{p_0}$.\end{theorem}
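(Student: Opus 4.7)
The plan is to adapt the endpoint argument of P\'erez--Pradolini--Torres--Trujillo-Gonz\'alez (originally designed for multilinear Calder\'on--Zygmund commutators) to the square-function setting, using Theorem \ref{2} as the strong-type input and exploiting the $L^2(dt/t)$-regularity in (H2) and (H3) to control the bad parts of the decomposition. By the BMO-linearity of the commutator and the scaling $\vec f\mapsto \vec f/t$, I may normalize so that $\|b_i\|_{BMO}=1$ for every $i$ and the level in the distribution function is $1$. The task then reduces to showing
$$\nu_{\vec{\omega}}\bigl(\{x\in\mathbb{R}^n:|T_{\vec b}\vec f(x)|>1\}\bigr)
\le C\prod_{j=1}^{m}\Bigl(\int_{\mathbb{R}^n}\Phi(|f_j(x)|)\,\omega_j(x)\,dx\Bigr)^{1/m}.$$

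For each $j$ I apply the Calder\'on--Zygmund decomposition to $|f_j|^{p_0}$ at height $1$, obtaining disjoint cubes $\{Q_{j,k}\}$ with $1<|Q_{j,k}|^{-1}\int_{Q_{j,k}}|f_j|^{p_0}\le 2^n$, and the splitting $f_j=g_j+h_j$, where $g_j$ equals $f_j$ outside $\Omega_j=\bigcup_k Q_{j,k}$ and equals the average of $f_j$ on each $Q_{j,k}$, while each $h_{j,k}=(f_j-(f_j)_{Q_{j,k}})\chi_{Q_{j,k}}$ has zero mean. Standard manipulations yield $\|g_j\|_\infty\lesssim 1$ and $\int |g_j|^{p_0}\omega_j\lesssim \int |f_j|^{p_0}\omega_j$. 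Writing $\Omega^*=\bigcup_{j,k}Q_{j,k}^*$ for the union of the $c\sqrt n$-dilates, the $A_{\vec 1}$ property together with the doubling of each $\omega_i$ and the Calder\'on--Zygmund size estimate controls $\nu_{\vec\omega}(\Omega^*)$ by the right-hand side above; it therefore suffices to estimate the contribution on $(\Omega^*)^c$.

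Expanding by multilinearity, $T_{\vec b}(\vec f)=T_{\vec b}(\vec g)+\sum_{\emptyset\neq S\subseteq\{1,\dots,m\}}T_{\vec b}(\vec f^S)$, where the $j$-th entry of $\vec f^S$ is $h_j$ when $j\in S$ and $g_j$ otherwise. The all-good term is estimated on $(\Omega^*)^c$ by Chebyshev and Theorem \ref{2} at exponents $p_1=\cdots=p_m=mp_0$ (for which $A_{\vec 1}\subset A_{\vec P/p_0}$ holds); the interpolation $\|g_j\|_{L^{mp_0}(\omega_j)}^{mp_0}\le\|g_j\|_\infty^{(m-1)p_0}\|g_j\|_{L^{p_0}(\omega_j)}^{p_0}$ then delivers the desired bound. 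The main obstacle is the mixed pieces $T_{\vec b}(\vec f^S)$ with $S\neq\emptyset$: for each $j\in S$, I exploit the cancellation of $h_{j,k}$ by subtracting $K_t(x,\vec y)|_{y_j=c_{j,k}}$ inside the $L^2(dt/t)$-norm and invoking (H2), while the coordinates outside $S$ are handled via (H3). For the commutator factors I split $b_i(x)-b_i(y_i)=(b_i(x)-(b_i)_{Q_{j,k}})+((b_i)_{Q_{j,k}}-b_i(y_i))$: the first summand factors out of the $dt/t$-integral and, via John--Nirenberg, generates the Orlicz maximal operator $M_{L(\log L)^{p_0}}$ acting on $|f_j|^{p_0}$; the second summand is absorbed through an Orlicz H\"older inequality on $Q_{j,k}$, producing the extra logarithmic factor encoded in $\Phi$. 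Summing the resulting geometric series over the annuli $S_{j_0}(Q_{j,k})$ from (H2) and (H3) and applying the weighted bound for products of Orlicz maximal operators under $A_{\vec 1}$ then completes the argument.
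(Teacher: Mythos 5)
Your proposed route is the Calder\'on--Zygmund decomposition strategy going back to P\'erez and coauthors, and it is genuinely different from the paper's argument: the paper instead follows the sharp-maximal / good-$\lambda$ route, combining the Fefferman--Stein lemma (Lemma \ref{21}), the pointwise sharp-maximal bound $M^\sharp_\delta(T_{\vec b}\vec f)\lesssim\sum_i \mathcal M^{(i)}_{\Phi}\vec f + M_\varepsilon(T\vec f)$ obtained in Lemma \ref{lem31} and its Orlicz refinement, an endpoint $A_{\vec 1}$ estimate for the Orlicz maximal operator $\mathcal M_\Phi^{(i)}$ (Lemma \ref{5}), and a final submultiplicativity step. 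Unfortunately your route does not go through under the hypotheses actually available for this theorem.

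The obstruction is in the bad-part estimate. To control the mixed terms $T_{\vec b}(\vec f^S)$ with $S\neq\emptyset$ on $(\Omega^*)^c$, you propose to exploit the zero mean of $h_{j,k}$ by subtracting $K_t(x,\vec y)|_{y_j=c_{j,k}}$ inside the $L^2(dt/t)$-norm and invoking (H2). But (H2) is a H\"ormander-type integral condition in the \emph{first} variable only: it controls
$$
\Bigl(\int_{S_{j_m}(Q)}\cdots\int_{S_{j_1}(Q)}\Bigl(\int_0^\infty|K_t(z,\vec y)-K_t(x,\vec y)|^2\tfrac{dt}{t}\Bigr)^{p_0'/2}d\vec y\Bigr)^{1/p_0'}
$$
for $x,z\in \tfrac12 Q$, and gives no control whatsoever over differences in the $y_j$ slots. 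Unlike the Xue--Yan condition of Definition \ref{1.3}, which has the symmetric smoothness estimate \eqref{eq3} in each $y_j$, the new condition (H2) of Definition \ref{1.444} is deliberately asymmetric; the whole point of the sharp-maximal route is that it requires kernel regularity only in $x$. The kernel $K_t(x,\vec y)=t^{-mn}K(x/t,\vec y/t)$ is a dilation of a general \emph{non-convolution} kernel, so $x$-regularity does not transfer to $y_j$-regularity. Without a $y_j$-variant of (H2), the cancellation of the atoms $h_{j,k}$ cannot be exploited, the annular sums over $S_{j_0}$ do not converge, and the bad-part estimate collapses. Salvaging a CZ-decomposition proof would require strengthening the hypotheses to include $y_j$-smoothness, strictly narrowing the scope of Theorem \ref{3}. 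The remaining ingredients of your sketch (the good-part estimate via Theorem \ref{2} at $p_i=mp_0$ with $A_{\vec 1}\subset A_{\vec P/p_0}$, the weighted measure of $\Omega^*$, the John--Nirenberg and Orlicz--H\"older treatment of $b_i(x)-b_i(y_i)$) are standard and unobjectionable, but they cannot compensate for the missing smoothness in the $y_j$ variables.
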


We organize this paper as follows: Section $\ref{Sec-2}$ contains one example concerning with the new assumption on $\mathfrak{T}_{m}$. Section $\ref{Sec-3}$ will be devoted to establish two key propositions related to multilinear square Fourier multiplier operator, which can be used to prove Theorem \ref{31}-\ref{34}. In section $\ref{SEC4}$, we will give the proofs of Theorem \ref{1} and Theorem \ref{2}.  Section $\ref{Sec-5}$ will be devoted to give the proof of Theorem \ref{3}.

Throughout this paper, the notation $A \lesssim B$ stands for $A \leq C B$ for some positive constant C independent of A and B.
\section{An example}\label{Sec-2}
In this section, an example will be given to show that there are some multilinear square Fourier multiplier operators which are bounded from $L^{q_1}(\mathbb{R}^n)\times L^{q_2}(\mathbb{R}^n)$ to $L^{q}(\mathbb{R}^n)$. Thus, the assumption that $\mathfrak{T}_{m}$ is bounded from $L^{q_1}\times  L^{q_2}$ into $L^{q,\infty}$ in Theorem \ref{31}- \ref{34} is reasonable.

Let \begin{align*}
&\widetilde{\mathcal T}_m(\vec f)(x)
=\int_{0}^{\infty}\int_{(\mathbb{R}^n)^4}
e^{2\pi ix\cdot(\xi_1+\xi_2+\xi_3+\xi_4)}
m(t\xi_1,t\xi_2)m(t\xi_3,t\xi_4)
\prod_{i=1}^4\hat f_i(\xi_i)
d\xi_i\,\frac{dt}{t}.
\end{align*}

\begin{example}
\label{lem:4-lin-FM-1}
Suppose that $m(0,0)=0$ and there exists some $\varepsilon>0$ such that
\begin{equation}\label{eq:4-lin-FM-1}
|\partial^\alpha m(\xi_1,\xi_2)|
\leq (1+|\xi_1|+|\xi_2|)^{-s-\varepsilon}, \quad \hbox{ \ for \ all \ }|\alpha|\le 2n+1.
\end{equation}

Then, there exists a constant $\delta,$ with $0<\delta\le 1$, such that\begin{enumerate}
\item [(i)]
$\widetilde{\mathcal T}_m$ is bounded from
$L^{q_1}(\mathbb{R}^n)\times L^{q_2}(\mathbb{R}^n)\times L^{q_3}(\mathbb{R}^n)
\times L^{q_4}(\mathbb{R}^n)$
to $L^{q}(\mathbb{R}^n)$ for $2-\delta<q_1,q_2,q_3,q_4<\infty$ with $1/q=1/q_1+1/q_2+1/q_3+1/q_4$.
\item [(ii)]  $\mathfrak{T}_{m}$
is bounded
from $L^{q_1}(\mathbb{R}^n)\times L^{q_2}(\mathbb{R}^n)$
to $L^{q}(\mathbb{R}^n)$ for $2-\delta<q_1,q_2<\infty$ with $1/q=1/q_1+1/q_2$.\end{enumerate}
\end{example}
%
%
%
\begin{proof}
(i) Let $\tilde m(\xi_1,\xi_2,\xi_3,\xi_4)=\int_{0}^{\infty}
m(t\xi_1,t\xi_2) m(t\xi_3,t\xi_4)\frac{dt}{t}.$ Then  $\widetilde{\mathcal T}_m$ can be written as a Fourier multiplier operator in the following form:
\begin{align*}
&\widetilde{\mathcal T}_m(\vec f)(x)
\\
&=\int_{(\mathbb{R}^n)^4}e^{2\pi ix\cdot(\xi_1+\xi_2+\xi_3+\xi_4)} \tilde m(\xi_1,\xi_2,\xi_3,\xi_4)
\hat f_1(\xi_1)\hat f_2(\xi_2)\hat f_3(\xi_3)\hat f_4(\xi_4)
d\xi_1d\xi_2d\xi_3d\xi_4,
\end{align*}

Next, we will show that $\tilde m$ is a multiplier by considering two cases.
\item Case (a): $1\le|\alpha|\le 2n+1$. We have
\begin{align*}
|\partial^\alpha\tilde m(\xi_1,\xi_2,\xi_3,\xi_4)|
&=\bigl|\int_{0}^{\infty}
|\partial^\alpha \bigl(m(t\xi_1,t\xi_2) m(t\xi_3,t\xi_4)\bigr)\frac{dt}{t}
\bigr|
\\
&\leq
\int_{0}^{\infty}\frac{t^{|\alpha|}}
{(1+|t\xi_1|+|t\xi_2|)^{s+\varepsilon}(1+|t\xi_3|+|t\xi_4|)^{s+\varepsilon}}
\frac{dt}{t}
\\
&\le \int_{0}^{\infty}\frac{t^{|\alpha|}}
{(1+t(|\xi_1|+|\xi_2|+|\xi_3|+|\xi_4|))^{s+\varepsilon}}
\frac{dt}{t}
\\
&=\frac{1}{(|\xi_1|+|\xi_2|+|\xi_3|+|\xi_4|)^{|\alpha|}}
 \int_{0}^{\infty}\frac{s^{|\alpha|}}{(1+s)^{s+\varepsilon}}\frac{ds}{s}.
\end{align*}

\item Case (b): $|\alpha|=0$.

 By using the mean-value theorem and the assumption
$m(0,0)=0$, we may obtain that
$|m(\xi_1,\xi_2)|\leq |\xi_1|+|\xi_2|$.
Thus, together with the boundedness of $m$, it yields that
$|m(\xi_1,\xi_2)|\leq (|\xi|+|\xi_2|)^{1/2}$ for $\xi_1,\xi_2\in\mathbb{R}^n$.
Therefore, we have
\begin{align*}
|\tilde m(\xi_1,\xi_2,\xi_3,\xi_4)|
&=\Bigl|\int_{0}^{\infty}
\bigl(m(t\xi_1,t\xi_2) m(t\xi_3,t\xi_4)\bigr)\frac{dt}{t}
\Bigr|
\\
&\leq
\int_{0}^{\infty}\frac{(|t\xi_1|+|t\xi_2|)^{1/4}}
{(1+|t\xi_1|+|t\xi_2|)^{1/2}}\frac{(|t\xi_3|+|t\xi_4|)^{1/4}}
{(1+|t\xi_3|+|t\xi_4|)^{1/2}}
\frac{dt}{t}
\\
&\le \int_{0}^{\infty}\frac{(t(|\xi_1|+|\xi_2|)(t(|\xi_3|+|\xi_4|))^{1/4}}
{(1+t(|\xi_1|+|\xi_2|+|\xi_3|+|\xi_4|))^{1/2}}
\frac{dt}{t}
\\
&\le
 \int_{0}^{\infty}\frac{s^{1/2}}{(1+s)^{1/2}}\frac{ds}{s}<\infty.
\end{align*}
Note that $2n+1>4n/2$, then by Theorem 1 in \cite{si}, one obtains that there exists
$0<\delta\le 1$ such that
$\widetilde{\mathcal T}_m$ is bounded
from $L^{q_1}(\mathbb{R}^n)\times L^{q_2}(\mathbb{R}^n)\times L^{q_3}(\mathbb{R}^n)\times L^{q_4}(\mathbb{R}^n)$
to $L^{q}(\mathbb{R}^n)$ for $2-\delta<q_1,q_2,q_3,q_4$ with
$1/q=1/q_1+1/q_2+1/q_3+1/q_4$.

(ii)
Note that
\begin{align*}
\mathfrak{T}_{m}(\vec f)(x)^2
&=\int_{0}^{\infty}\int_{(\mathbb{R}^n)^4}
e^{2\pi ix\cdot(\xi_1+\xi_2-\xi_3-\xi_4)}
m(t\xi_1,t\xi_2)\overline{m(t\xi_3,t\xi_4)}
\hat f_1(\xi_1)\hat f_2(\xi_2)\\&\quad \times \overline{\hat f_1(\xi_3)\hat f_2(\xi_4)}
d\xi_1d\xi_2d\xi_3d\xi_4\,\frac{dt}{t}
\\
&=\int_{0}^{\infty}\int_{(\mathbb{R}^n)^4}
e^{2\pi ix\cdot(\xi_1+\xi_2+\xi_3+\xi_4)}
m(t\xi_1,t\xi_2)\overline{m(-t\xi_3,-t\xi_4)}
\\&\quad \times \hat f_1(\xi_1)\hat f_2(\xi_2)\overline{\hat f_1(-\xi_3)\hat f_2(-\xi_4)}
d\xi_1d\xi_2d\xi_3d\xi_4\,\frac{dt}{t}.
\end{align*}
Then, as a consequence of (i), we obtain that $\mathfrak{T}_{m}$ is bounded
from $L^{q_1}(\mathbb{R}^n)\times L^{q_2}(\mathbb{R}^n)$
to $L^{q}(\mathbb{R}^n)$ for $2-\delta<q_1,q_2<\infty$ with $1/q=1/q_1+1/q_2$.
\end{proof}

\section{Proofs of Theorems \ref{31} and \ref{34}}\label{Sec-3}

This section will be devoted to prove Theorems \ref{31} and \ref{34} by showing that
the associated kernel of $  \mathfrak{T}_m$ satisfies the conditions (H2) and
(H3) in Definition \ref{1.444}. The following two propositions provide a foundation for our analysis.
\begin{proposition}\label{prop:BuiDuong}
Let $s\in \mathbb{N}$ satisfy $n+1\le s\le 2n$.
Suppose $m\in L^\infty((\mathbb{R}^n)^2)$ satisfies
\begin{equation}\label{eq:2-lin-FM-5-1}
|\partial^\alpha m(\xi_1,\xi_2)|
\lesssim \frac{(|\xi_1|+|\xi_2|)^{-|\alpha|+\varepsilon_1}}
{(1+|\xi_1|+|\xi_2|)^{\varepsilon_1+\varepsilon_2}} ,
\end{equation}
for some $\varepsilon_1, \varepsilon_2>0$ and  $|\alpha|\le s$.
Then, for any $2n/s< p\le 2$, there exist $C>0$ and $\delta>n/p$, such that
\begin{align}
&\biggl(
\int_{S_j(Q)}\int_{S_k(Q)}\Bigl(\int_{0}^\infty \Bigl|
\check m\Bigl(\frac{x-y_1}{t},\frac{x-y_2}{t}\Bigr)-
\check m \Bigl(\frac{\bar{x}-y_1}{t},\frac{\bar{x}-y_2}{t}\Bigr)\Bigr|^2
\frac{dt}{t^{4n+1}}\Bigr)^{\frac{p'}{2}}dy_1dy_2\biggr)^{\frac1{p'}}
\label{eq:FM-diff-1}
\\
&\le C\frac{|x-\bar{x}|^{2(\delta-n/p)}}{|Q|^{2\delta/n}}2^{-2\delta \max(j,k)}
\notag
\end{align}
for all balls $Q$, all $x,\bar{x}\in 1/ 2Q$ and $(j,k)\ne (0,0)$.
\end{proposition}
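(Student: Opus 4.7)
The strategy is a frequency-side Littlewood--Paley decomposition of $m$. Fix a radial $\varphi\in C_c^\infty$ supported in $\{1/2\le|\xi|\le 2\}$ with $\sum_{\ell\in\Z}\varphi(2^{-\ell}\xi)=1$ for $\xi\ne 0$, and write $m=\sum_\ell m_\ell$ with $m_\ell(\xi)=\varphi(2^{-\ell}|\xi|)m(\xi)$. Rescaling $\tilde m_\ell(\xi)=m_\ell(2^\ell\xi)$ yields a function supported in the unit annulus of $\R^{2n}$ satisfying $\|\partial^\alpha \tilde m_\ell\|_\infty\lesssim\theta_\ell$ for all $|\alpha|\le s$, where $\theta_\ell=\min(2^{\varepsilon_1\ell},2^{-\varepsilon_2\ell})$. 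The essential point is that $\sum_{\ell\in\Z}\theta_\ell<\infty$, which will provide absolute summability at the end.

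Next we establish the frequency-localized weighted estimate
$$
\|(1+2^\ell|u|)^s\check m_\ell(u)\|_{L^{p'}(\R^{2n})}\lesssim 2^{2n\ell/p}\theta_\ell.
$$
This comes from the Hausdorff--Young inequality (applicable because $p'\ge 2$) applied to $\tilde m_\ell$ and to $\partial^\alpha \tilde m_\ell$ with $|\alpha|\le s$, combined with the compact support in the unit annulus, followed by a dilation by $2^\ell$. The bound simultaneously encodes a volume gain $2^{2n\ell/p}$ and decay of $\check m_\ell$ at scale $2^{-\ell}$.

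Writing $\Delta_t$ for the integrand difference in \eqref{eq:FM-diff-1} and $\Delta_t=\sum_\ell \Delta_t^\ell$ accordingly, two applications of Minkowski's inequality (first in $L^2(dt/t^{4n+1})$, then in $L^{p'}(dy_1\,dy_2)$) reduce matters to estimating
$$
I_\ell^2\le\int_0^\infty\|\Delta_t^\ell\|_{L^{p'}(S_j\times S_k)}^2\,\frac{dt}{t^{4n+1}}
$$
for each $\ell$ and summing. For each $\ell$ we combine the direct bound $|\Delta_t^\ell|\lesssim|\check m_\ell((x-y)/t)|+|\check m_\ell((\bar x-y)/t)|$ with the Lipschitz bound from the fundamental theorem of calculus (which carries an extra factor $2^\ell|x-\bar x|/t$ because $\nabla\check m_\ell$ acquires the dilation $2^\ell$) via $\min(1,a)\le a^\gamma$ for $\gamma\in[0,1]$. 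The substitution $u=(x-y)/t$ shows that $y\in S_j\times S_k$ corresponds to $|u|\sim R/t$ with $R=2^{j_0}r_Q$; invoking the weighted $L^{p'}$ bound above on this annulus yields
$$
\|\Delta_t^\ell\|_{L^{p'}(S_j\times S_k)}\lesssim t^{2n/p'}2^{2n\ell/p}\theta_\ell(2^\ell|x-\bar x|/t)^\gamma(1+2^\ell R/t)^{-s}.
$$

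The final step is the $t$-integration. Substituting $u=2^\ell R/t$ reduces the $dt/t^{4n+1}$ integral to $R^{-4n/p-2\gamma}\int_0^\infty u^{4n/p+2\gamma-1}(1+u)^{-2s}\,du$, which converges precisely when $s>2n/p+\gamma$. Since $p>2n/s$, we may choose $\gamma\in(0,s-2n/p)$; after the $2^\ell$-factors cancel this gives $I_\ell\lesssim\theta_\ell(|x-\bar x|/R)^\gamma R^{-2n/p}$. Summing in $\ell$ using $\sum_\ell\theta_\ell<\infty$ and setting $\delta=\gamma/2+n/p$ produces the desired estimate with $\delta>n/p$. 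The principal technical obstacle is the weighted $L^{p'}$ bound itself (balancing Hausdorff--Young against the Sobolev-type hypothesis) together with the delicate bookkeeping of the $2^\ell$ and $t$ powers that must cancel exactly; the hypothesis $p>2n/s$ is precisely the condition that makes every exponent line up so that the $t$-integral converges and the $\ell$-sum is absolutely summable.
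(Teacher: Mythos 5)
Your proof is correct, and it takes a genuinely different and arguably cleaner route than the paper's. The paper proceeds by (i) rescaling so that $2^{\max(j,k)}r_Q=1$, (ii) multiplying $\check m_\ell$ by $y^{\alpha}$ (with $|\alpha|=s$, and \emph{only} in the variable ranging over the outer shell $S_{\max(j,k)}$, where $|y|\sim 1$), (iii) constructing the auxiliary multiplier $\varphi_\ell(\xi,\eta)=m_\ell(\xi,\eta)\bigl(e^{2\pi i t^{-1}h\cdot(\xi+\eta)}-1\bigr)$ and expanding $\partial^\alpha_\xi\varphi_\ell$ by Leibniz, and (iv) splitting the $t$-integral into $(0,(2^\ell|h|)^{1\pm\varepsilon}]$ and $[(2^\ell|h|)^{1\pm\varepsilon},\infty)$, using the direct Hausdorff--Young bound on the short interval and the Leibniz-expanded bound on the long one, then summing separately over $2^\ell|h|\ge1$ and $2^\ell|h|<1$. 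This forces the case analysis $2n/p<s<2n/p+1$, $s=2n/p+1$, $2n/p+1<s\le 2n$, because the half-line $t$-integral $\int t^{2s-4n/p-1}\,dt$ in the raw estimate diverges and has to be truncated. Your version sidesteps all of this: the weighted Hausdorff--Young bound $\|(1+2^\ell|u|)^s\check m_\ell\|_{L^{p'}}\lesssim 2^{2n\ell/p}\theta_\ell$ supplies a genuine decay factor $(1+2^\ell R/t)^{-s}$ in $t$, so the $t$-integral converges over all of $(0,\infty)$ once $\gamma<s-2n/p$, and the Lipschitz gain from the fundamental theorem of calculus combined with $\min(1,a)\le a^\gamma$ replaces the $\varphi_\ell$-expansion, the interval splitting, and the $\varepsilon$-perturbation of exponents all at once. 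The $2^\ell$-powers then cancel exactly in $I_\ell$, making the $\ell$-sum converge purely because $\sum_\ell\theta_\ell<\infty$, with no case distinction on $s$. What the paper's route buys is that it only needs derivatives in one block of variables (the one paired with the larger shell), which marginally weakens the hypothesis needed; what your route buys is uniformity in $s$ and the elimination of the three-case structure, the rescaling normalization, and the auxiliary-multiplier Leibniz computation. One small precision: since you use both $\min(1,a)\le a^\gamma$ (forcing $\gamma\le1$) and the $t$-integral convergence (forcing $\gamma<s-2n/p$), the allowable range is $\gamma\in(0,\min(1,\,s-2n/p))$, and for the final identification $\delta=\gamma/2+n/p$ to match the stated conclusion one should also note $|h|\le r_Q\le R/2$ so that the annulus translation in the FTC step stays within $|u|\gtrsim R/t$.
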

\begin{proof}
Denote the left-side of \eqref{eq:FM-diff-1} by $A_{j,k}(m,Q)(x, \bar{x})$,
and let $Q=B(x_0,R)$. Let $u=ax$ $(a>0)$ and $s=at$, one obtains
 that
\begin{align*}
A_{j,k}(m,Q)(x,\bar x)
&=a^{-2n/p'}\biggl(\int_{S_j(Q^a)}\int_{S_k(Q^a)}\Bigl(\int_{0}^\infty \Bigl|
\check m\Bigl(\frac{x^a-u_1}{at},\frac{x^a-u_2}{at}\Bigr)\\&\quad
-\check m \Bigl(\frac{\bar{x}^a-u_1}{at},\frac{\bar{x}^a-u_2}{at}\Bigr)
\Bigr|^2\frac{dt}{t^{4n+1}}\Bigr)^{\frac{p'}{2}}du_1du_2
\biggr)^{\frac1{p'}}
\\
&=a^{2n/p}\biggl(\int_{S_j(Q^a)}\int_{S_k(Q^a)}\Bigl(\int_{0}^\infty
\Bigl|\check m\Bigl(\frac{x^a-u_1}{s},\frac{x^a-u_2}{s}\Bigr)\\&\quad
-\check m \Bigl(\frac{\bar{x}^a-u_1}{s},\frac{\bar{x}^a-u_2}{s}\Bigr)
\Bigr|^2\frac{ds}{s^{4n+1}}\Bigr)^{\frac{p'}{2}}du_1du_2
\biggr)^{\frac1{p'}}\\
&=a^{2n/p} A_{j,k}(m,Q^a)(x^a, \bar{x}^a),
\end{align*}
where $Q^a=B(ax_0,aR)$, $x^{a}=ax$ and $\bar x^{a}=a \bar{x}$.
Therefore, taking $a=1/(2^{\max(j,k)}R)$, the desired estimate \eqref{eq:FM-diff-1} follows from the following fact:
\begin{equation*}
A_{j,k}(m,Q^a)(x^a, \bar{x}^a)\lesssim
\frac{|x^a-\bar{x}^a|^{2(\delta-n/p)}}{|Q^a|^{2\delta/n}}2^{-2\delta \max(j,k)}
=|x^a-\bar{x}^a|^{2(\delta-n/p)}
\end{equation*}
Thus, we only need to show
\eqref{eq:FM-diff-1} in the case $R=1/2^{\max(j,k)}$. In addition, we may assume
$|h|=|x-\bar{x}|<1/2$ and $k\ge j$ (hence $k\ge1$).
Hence, for $Q=B(x_0,2^{-k})$ and $\delta>n/p$, we need to show that
\begin{equation}\label{eq:BuiDuong-reduced}
A_{j,k}(m,Q)(x,\bar x)\lesssim  |x-\bar{x}|^{2(\delta-n/p)}.
\end{equation}


Let $\Psi\in \mathcal{S}(\mathbb{R}^{2n})$ with
$\operatorname{supp}\Psi \in \{ (\xi,\eta): 1/ 2\leq |\xi|+|\eta|\leq 2\}$ and
$$
\sum_{j\in \mathbb{Z}}\Psi(2^{-j}\xi,2^{-j}\eta)=1, \quad\quad\hbox{\ for \ all } (\xi,\eta)\in (\mathbb{R}^{2n})\setminus \{0 \}.
$$
\par Now, we can write
\begin{eqnarray*}\label{decomposition}
 m(\xi,\eta)=\sum_{j\in \mathbb{Z}}m_j(\xi,\eta)
 :=\sum_{j\in \mathbb{Z}}\Psi(2^{-j}\xi,2^{-j}\eta)m(\xi,\eta)
\end{eqnarray*}
and hence $\operatorname{supp}m_j\subseteq
\{ (\xi,\eta):  2^{j-1}\leq |\xi|+|\eta|\leq 2^{j+1}\}$.

By changing variables, to prove \eqref{eq:BuiDuong-reduced}, it is sufficient to show that for $Q=B(x_0,2^{-k})$, the following inequality holds:
\begin{align*}
&\biggl(
\int_{S_j(Q_{\bar{x}})}\int_{S_k(Q_{\bar{x}})}\Bigl(\int_0^\infty \Bigl|
\check m\Bigl(\frac{y+h}{t},\frac{z+h}{t}\Bigr)
-\check m \Bigl(\frac{y}{t},\frac{z}{t}\Bigr) \Bigr|^2
\frac{dt}{t^{4n+1}}\Bigr)^{\frac{p'}{2}}dydz\biggr)^{\frac1{p'}}
\le C{|h|^{2(\delta-n/p)}},
\end{align*}
where $h=x-\bar{x}$ and $Q_{\bar{x}}=Q-\bar{x}$.
We prove this in the following three cases.

\textbf{(a) The case $2n/p<s<2n/p+1$}.
First, we note that \eqref{eq:2-lin-FM-5-1} remains valid for any smaller
positive number than $\varepsilon_1$. Thus, one may take $\varepsilon_1$ sufficiently
close to $s-2n/p$ so that $0<\varepsilon_1<s-2n/p$.

For any interval $I$ in $\mathbb R_+$, we introduce the notion $A_\ell$ and $A_\ell(I)$ as follows.
\begin{align*}
A_\ell&:=\biggl(
\int_{S_j(Q_{\bar{x}})}\int_{S_k(Q_{\bar{x}})}\Bigl(\int_0^\infty\Bigl|
\check m_\ell\Bigl(\frac{y+h}{t},\frac{z+h}{t}\Bigr)-
\check m_\ell\Bigl(\frac{y}{t},\frac{z}{t}\Bigr) \Bigr|^2
\frac{dt}{t^{4n+1}}\Bigr)^{\frac{p'}{2}}dydz\biggr)^{\frac1{p'}};
\\
A_\ell(I)&:=\biggl(
\int_{S_j(Q_{\bar{x}})}\int_{S_k(Q_{\bar{x}})}\Bigl(\int_I \Bigl|
\check m_\ell\Bigl(\frac{y+h}{t},\frac{z+h}{t}\Bigr)-
\check m_\ell\Bigl(\frac{y}{t},\frac{z}{t}\Bigr) \Bigr|^2
\frac{dt}{t^{4n+1}}\Bigr)^{\frac{p'}{2}}dydz\biggr)^{\frac1{p'}}.
\end{align*}

Since $Q_{\bar x}=B(x_0-\bar x, 1/2^k)$, we have
$2^{-2}\le |y+h|\le 2$ and $|z+h|\le 2^{j-k+1}$ for all $y\in S_k(Q_{\bar x})$
and $z\in S_j(Q_{\bar x})$. Therefore, it yields that
\begin{equation*}
A_\ell(I)\lesssim
\biggl(\int_{|z|\le 2^{j-k+1}}\int_{2^{-2}\le |y|\le2}\Bigl(\int_I \Bigl|
\check m_\ell\Bigl(\frac{y}{t},\frac{z}{t}\Bigr)\Bigr|^2
\frac{dt}{t^{4n+1}}\Bigr)^{\frac{p'}{2}}dydz\biggr)^{\frac1{p'}}.
\end{equation*}
Note that $|y|\sim 1$ in the above integration domain, by the Minkowski inequality
and the Haussdorf-Young inequality, for $|\alpha|=s$, we have
\begin{align}
A_\ell(I)&\lesssim
\biggl(\int_{|z|\le 2^{j-k+1}}\int_{2^{-2}\le |y|\le2}\Bigl(\int_I |y^\alpha|
\Bigl|\check m_\ell\Bigl(\frac{y}{t},\frac{z}{t}\Bigr)\Bigr|^2
\frac{dt}{t^{4n+1}}\Bigr)^{\frac{p'}{2}}dydz\biggr)^{\frac1{p'}} \notag
\\
&\le \biggl(\int_I \biggl(
\int_{|z|\le 2^{j-k+1}}\int_{2^{-2}\le |y|\le2}\Bigl|y^{\alpha}\check m_\ell
\Bigl(\frac{y}{t},\frac{z}{t}\Bigr)\Bigr|^{p'}dydz
\biggr)^{\frac2{p'}}\frac{dt}{t^{4n+1}}\biggr)^{\frac1{2}} \notag
\\
&= \biggl(\int_I \biggl(
\int_{|tz|\le 2^{j-k+1}}\int_{2^{-2}\le |ty|\le2}|y^{\alpha}\check m_\ell
(y,z)|^{p'}dydz
\biggr)^{\frac2{p'}}t^{2|\alpha|+4n/p'}\frac{dt}{t^{4n+1}}\biggr)^{\frac1{2}}
\notag
\\
&\le \biggl(\int_I \biggl(
\int_{\mathbb{R}^n}\int_{\mathbb{R}^n}
|\partial_\xi^{\alpha} m_\ell(\xi,\eta)|^{p}d\xi d\eta
\biggr)^{\frac2{p}}t^{2|\alpha|-4n/p-1}dt\biggr)^{\frac1{2}} \notag
\\
&\lesssim
\biggl(\int_I t^{2|\alpha|-4n/p-1}dt\biggr)^{\frac1{2}}\biggl(
\int_{\mathbb{R}^n}\int_{\mathbb{R}^n}
|\partial_\xi^{\alpha} m_\ell(\xi,\eta)|^{p}d\xi d\eta
\biggr)^{\frac1{p}}. \notag
\end{align}
Hence, we obtain
\begin{equation}\label{eq:FM-diff-2}
A_\ell(I)\lesssim
\frac{(2^{\ell})^{\varepsilon_1-|\alpha|+2n/p}}
{(1+2^\ell)^{\varepsilon_1+\varepsilon_2}}
\biggl(\int_I t^{2|\alpha|-4n/p-1}dt\biggr)^{\frac1{2}}.
\end{equation}
Now, setting $\varphi_{\ell}(\xi,\eta)
=m_{\ell}(\xi,\eta)(e^{2\pi it^{-1}h\cdot(\xi+\eta)}-1)$, we have
\begin{equation*}
\check m_\ell\Bigl(\frac{y+h}{t},\frac{z+h}{t}\Bigr)
-\check m_\ell\Bigl(\frac{y}{t},\frac{z}{t}\Bigr)
=\check \varphi_{\ell}\Bigl(\frac{y}{t},\frac{z}{t}\Bigr).
\end{equation*}
Proceeding the same argument as before, we have
\begin{align*}
A_\ell(I)&\lesssim \biggl(
\int_{S_j(Q_{\bar{x}})}\int_{S_k(Q_{\bar{x}})}\Bigl(
\int_I \Bigl|y^{\alpha}\Bigl(
\check m_\ell\Bigl(\frac{y+h}{t},\frac{z+h}{t}\Bigr)-
\check m_\ell\Bigl(\frac{y}{t},\frac{z}{t}\Bigr)\Bigr)\Bigr|^2
\frac{dt}{t^{4n+1}}\Bigr)^{\frac{p'}{2}}dydz\biggr)^{\frac1{p'}}
\\
&= \biggl(\int_I \biggl(\int_{S_j(Q_{\bar{x}})}\int_{S_k(Q_{\bar{x}})}
\Bigl|y^{\alpha}\check \varphi_\ell
\Bigl(\frac{y}{t},\frac{z}{t}\Bigr)\Bigr|^{p'}dydz
\biggr)^{\frac2{p'}}\frac{dt}{t^{4n+1}}\biggr)^{\frac1{2}}
\\
&= \biggl(\int_I \biggl(
\int_{S_j(t^{-1}Q_{\bar{x}})}\int_{S_k(t^{-1}Q_{\bar{x}})}
|y^{\alpha}\check \varphi_\ell(y,z)|^{p'}dydz
\biggr)^{\frac2{p'}}t^{2|\alpha|+4n/p'}\frac{dt}{t^{4n+1}}\biggr)^{\frac1{2}}
\\
&\le \biggl(\int_I \biggl(
\int_{\mathbb{R}^n}\int_{\mathbb{R}^n}
|\partial_\xi^{\alpha} \varphi_\ell(\xi,\eta)|^{p}d\xi d\eta
\biggr)^{\frac2{p}}t^{2|\alpha|-4n/p-1}dt\biggr)^{\frac1{2}}
\\
&= \biggl(\int_I \biggl(
\int_{\mathbb{R}^n}\int_{\mathbb{R}^n}|\partial_\xi^{\alpha}
[m_\ell(\xi,\eta)(e^{-2\pi it^{-1}h\cdot(\xi+\eta)}-1)]|^{p}d\xi d\eta
\biggr)^{\frac2{p}}t^{2|\alpha|-4n/p-1}dt\biggr)^{\frac1{2}}.
\end{align*}
By the following fact
\begin{equation*}
|\partial_\xi^{\alpha}[m_\ell(\xi,\eta)(e^{-2\pi it^{-1}h\cdot(\xi+\eta)}-1)]|
\lesssim \frac{2^{\ell}|h|}{t}\frac{(2^{\ell})^{\varepsilon_1-|\alpha|}}
{(1+2^\ell)^{\varepsilon_1+\varepsilon_2}}
+
\sum_{\beta=1}^{|\alpha|}\Bigl(\frac{|h|}{t}\Bigr)^\beta
\frac{(2^{\ell})^{\varepsilon_1-|\alpha|+\beta}}
{(1+2^\ell)^{\varepsilon_1+\varepsilon_2}},
\end{equation*}
it yields that
\begin{align}
A_\ell(I)&\lesssim
\biggl(\int_I\Bigl(\frac{2^{\ell}|h|}{t}
\frac{(2^{\ell})^{\varepsilon_1-|\alpha|}}
{(1+2^\ell)^{\varepsilon_1+\varepsilon_2}}
+
\sum_{\beta=1}^{|\alpha|}\Bigl(\frac{|h|}{t}\Bigr)^\beta
\frac{(2^{\ell})^{\varepsilon_1-|\alpha|+\beta}}
{(1+2^\ell)^{\varepsilon_1+\varepsilon_2}}\Bigr)^2
2^{4n\ell/p}t^{2|\alpha|-4n/p-1}dt\biggr)^{\frac1{2}}\label{eq:FM-diff-3}
\\
&\lesssim
\sum_{\beta=0}^{|\alpha|}|h|^{\max(\beta,1)}\frac
{2^{\ell(-|\alpha|+2n/p+\max(\beta,1)+\varepsilon_1)}}
{(1+2^\ell)^{\varepsilon_1+\varepsilon_2}}
\biggl(\int_I t^{2(|\alpha|-2n/p-\max(\beta,1))-1}dt\biggr)^{\frac1{2}}.\notag
\end{align}
Now, we fix sufficiently small $\varepsilon>0$ so that
$\varepsilon(s-2n/p)<\min\{\varepsilon_1, \varepsilon_2\}$.
Then, if $2^\ell |h|\ge 1$, noting $2n/p<s<2n/p+1$
and using \eqref{eq:FM-diff-2} for $I=(0, (2^\ell |h|)^{1+\varepsilon}]$,
we have
\begin{equation*}
A_\ell((0, (2^\ell |h|)^{1+\varepsilon}])\lesssim
2^{-\ell(s+\varepsilon_2-2n/p)}(2^\ell |h|)^{(1+\varepsilon)(s-2n/p)}
=|h|^{(1+\varepsilon)(s-2n/p)}2^{\ell(\varepsilon(s-2n/p)-\varepsilon_2)}.
\end{equation*}
By \eqref{eq:FM-diff-3} for $I=[(2^\ell |h|)^{1+\varepsilon}, \infty)$,
we have
\begin{align*}
A_\ell([(2^\ell |h|)^{1+\varepsilon}, \infty))
&\lesssim
\sum_{\beta=0}^{|\alpha|}|h|^{\max(\beta,1)}
{2^{\ell(-|\alpha|+2n/p+\max(\beta,1))}}
(2^\ell |h|)^{(1+\varepsilon)(s-2n/p-\max(\beta,1))}
\\
&=\sum_{\beta=0}^{|\alpha|}
|h|^{-\varepsilon\max(\beta,1) +(1+\varepsilon)(s-2n/p)}
2^{\ell\varepsilon((s-2n/p)-\max(\beta,1))}.
\end{align*}
Thus, noting $\varepsilon(s-2n/p)-\varepsilon_2<0$ and $|h|<1$, we obtain
\begin{align}
\sum_{2^\ell |h|\ge 1} A_\ell
&\lesssim \sum_{2^\ell |h|\ge 1}
|h|^{(1+\varepsilon)(s-2n/p)}2^{\ell(\varepsilon(s-2n/p)-\varepsilon_2)}
\label{eq:FM-diff-4}
\\
&\ +\sum_{2^\ell |h|\ge 1}\sum_{\beta=0}^{|\alpha|}
|h|^{-\varepsilon\max(\beta,1) +(1+\varepsilon)(s-2n/p)}
2^{\ell\varepsilon((s-2n/p)-\max(\beta,1))} \notag
\\
&\le |h|^{s-2n/p+\varepsilon_2}+\sum_{\beta=0}^{|\alpha|}|h|^{s-2n/p}
 \lesssim |h|^{s-2n/p}. \notag
\end{align}
In the case $2^\ell|h|<1$,
using \eqref{eq:FM-diff-2} for $I=(0, (2^\ell |h|)^{1-\varepsilon}]$,
we have
\begin{equation*}
A_\ell((0, (2^\ell |h|)^{1-\varepsilon}])\lesssim
2^{\ell(-s+2n/p+\varepsilon_1)}(2^\ell |h|)^{(1-\varepsilon)(s-2n/p)}
=|h|^{(1-\varepsilon)(s-2n/p)}2^{\ell(-\varepsilon(s-2n/p)+\varepsilon_1)}.
\end{equation*}
Further more, by using \eqref{eq:FM-diff-3} for $I=[(2^\ell |h|)^{1-\varepsilon}, \infty)$,
we have
\begin{align*}
A_\ell([(2^\ell |h|)^{1-\varepsilon}, \infty))
&\lesssim
\sum_{\beta=0}^{|\alpha|}|h|^{\max(\beta,1)}
{2^{\ell(-s+2n/p+\max(\beta,1))}}
(2^\ell |h|)^{(1-\varepsilon)(s-2n/p-\max(\beta,1))}
\\
&=\sum_{\beta=0}^{|\alpha|}
|h|^{\varepsilon\max(\beta,1)+(1-\varepsilon)(s-2n/p)}
2^{-\varepsilon\ell(s-2n/p-\max(\beta,1))}.
\end{align*}
By the fact that $\varepsilon(s-2n/p)-\varepsilon_1<0$ and $|h|<1$, we obtain
\begin{align}
\sum_{2^\ell |h|< 1} A_\ell
&\lesssim \sum_{2^\ell |h|< 1}
|h|^{(1-\varepsilon)(s-2n/p)}2^{\ell(-\varepsilon(s-2n/p)+\varepsilon_1)}
\label{eq:FM-diff-5}
\\
&\ +\sum_{2^\ell |h|< 1}\sum_{\beta=0}^{|\alpha|}
|h|^{\varepsilon\max(\beta,1)+(1-\varepsilon)(s-2n/p)}
2^{-\varepsilon\ell(s-2n/p-\max(\beta,1))} \notag
\\
&\le |h|^{s-2n/p-\varepsilon_1}+\sum_{\beta=0}^{|\alpha|}|h|^{s-2n/p}
 \lesssim |h|^{s-2n/p-\varepsilon_1}+|h|^{s-2n/p}. \notag
\end{align}
Noting that $0<\varepsilon_1<s-2n/p$ and taking $\delta=(s-\varepsilon_1)/2$, by \eqref{eq:FM-diff-4} and
\eqref{eq:FM-diff-5}, it holds that
\begin{align*}
&\biggl(
\int_{S_j(Q_{\bar{x}})}\int_{S_k(Q_{\bar{x}})}\Bigl(\int_0^\infty \Bigl|
\check m\Bigl(\frac{y+h}{t},\frac{z+h}{t}\Bigr)
-\check m \Bigl(\frac{y}{t},\frac{z}{t}\Bigr) \Bigr|^2
\frac{dt}{t^{4n+1}}\Bigr)^{\frac{p'}{2}}dydz\biggr)^{\frac1{p'}}
\\
&\le \sum_{\ell\in\mathbb Z}A_\ell
\lesssim{|h|^{2(\delta-n/p)}},
\end{align*}
This leads to the conclusion of Proposition \ref{prop:BuiDuong}
in the case $2n/p<s<2n/p+1$.

\textbf{(b) The case $2n/p<s=2n/p+1$}. First, we Choose $1<p_0<p$ such that $2n/p_0<s$. Then $p_0$ satisfies
$2n/p_0<s=2n/p+1<2n/p_0+1$. Hence, for all balls $Q$, all $x,\bar x\in \frac12 Q$ and $(j,k)\ne (0,0)$, by the step (a), we have
\begin{align*}
&\biggl(
\int_{S_j(Q_{\bar{x}})}\int_{S_k(Q_{\bar{x}})}\Bigl(\int_I \Bigl|
\check m\Bigl(\frac{y+h}{t},\frac{z+h}{t}\Bigr)-
\check m\Bigl(\frac{y}{t},\frac{z}{t}\Bigr) \Bigr|^2
\frac{dt}{t^{4n+1}}\Bigr)^{\frac{p'_0}{2}}dydz\biggr)^{\frac1{p'_0}}
\\
&\le C\frac{|h|^{2\delta-2n/p_0}}{|Q|^{2\delta/n}}2^{-2\delta \max(j,k)}.
\end{align*}
By the H\"older inequality, it yields that
\begin{align*}
&\biggl(
\int_{S_j(Q_{\bar{x}})}\int_{S_k(Q_{\bar{x}})}\Bigl(\int_I \Bigl|
\check m\Bigl(\frac{y+h}{t},\frac{z+h}{t}\Bigr)-
\check m\Bigl(\frac{y}{t},\frac{z}{t}\Bigr) \Bigr|^2
\frac{dt}{t^{4n+1}}\Bigr)^{\frac{p'}{2}}dydz\biggr)^{\frac1{p'}}
\\
&\le (2^{n(j+k)}|Q|^2)^{\frac{1}{p_0}-\frac{1}{p}}\biggl(
\int_{S_j(Q_{\bar{x}})}\int_{S_k(Q_{\bar{x}})}\Bigl(\int_I \Bigl|
\check m\Bigl(\frac{y+h}{t},\frac{z+h}{t}\Bigr)-
\check m\Bigl(\frac{y}{t},\frac{z}{t}\Bigr) \Bigr|^2\\&\quad \times
\frac{dt}{t^{4n+1}}\Bigr)^{\frac{p_0'}{2}}dydz\biggr)^{\frac1{p_0'}}
\\
&\lesssim(2^{2n\max(j,k)}|Q|^2)^{\frac{1}{p_0}-\frac{1}{p}}
\frac{|h|^{2\delta-2n/p_0}}{|Q|^{\frac {2\delta}n}}
\frac{1}{2^{2\delta\max(j,k)}}
\\
&=\frac{|h|^{({2\delta}-2n/p_0+2n/p)-2n/p}}
{|Q|^{\frac{({2\delta}-2n/p_0+2n/p)}{n}}}
{2^{-({2\delta}-2n/p_0+2n/p)\max(j,k)}}.
\end{align*}
Therefore, taking $\delta-n/p_0+n/p>n/p$ as $\delta$ newly,
we obtain the desired estimate.

\textbf{(c) The case $2n/p+1<s\le 2n$}.
In this case there is an integer $l$ such that $2n/p+l<s\le 2n/p+1+l$. Then
it follows that $2n/p<s-l\le 2n/p+1$. Thus, regarding $s-l$ as $s$,
we may deduce this case to the previous case (a) or case (b). This completes the proof of
Proposition  \ref{prop:BuiDuong}.

\end{proof}
\begin{proposition}\label{prop:BuiDuong2}
Let $s\in \mathbb{N}$ with $n+1\le s\le 2n$.
Let $m\in L^\infty((\mathbb{R}^n)^2)$ and satisfy
\begin{equation}\label{eq:2-lin-FM-5-2}
|\partial^\alpha m(\xi_1,\xi_2)|
\lesssim (|\xi_1|+|\xi_2|)^{-|\alpha|} \text{ for }|\alpha|\le s,
\end{equation}
and
\begin{equation}\label{eq:2-lin-FM-5-3}
|m(\xi_1,\xi_2)|
\lesssim\frac{(|\xi_1|+|\xi_2|)^{\varepsilon_1}}
{(1+|\xi_1|+|\xi_2|)^{\varepsilon_1+\varepsilon_2}}, \quad \hbox{\ for \ some } \varepsilon_1,\varepsilon_2>0.
\end{equation}
Then, for $2n/s<p\le 2$, there exists a constant $C>0$, such that the following inequality holds for all balls $Q$ with center at $x $ and $(j,k)\ne (0,0)$.
\begin{equation}\label{eq:FM-ker-1}
\bigl(
\int_{S_j(Q )}\int_{S_k(Q )}\Bigl(\int_0^\infty \Bigl|
\check m \Bigl(\frac{x-y_1}{t},\frac{x-y_2}{t}\Bigr)\Bigr|^2
\frac{dt}{t^{4n+1}}\Bigr)^{\frac{p'}{2}}dy_1dy_2\bigr)^{\frac1{p'}}
\le C\frac{1}{|Q|^{2/p}}2^{-2n\max(j,k)/p}.
\end{equation}

\end{proposition}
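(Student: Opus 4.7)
The plan is to prove Proposition \ref{prop:BuiDuong2} by the same scaling-plus-Littlewood-Paley strategy as in the proof of Proposition \ref{prop:BuiDuong}, except that we are now estimating the kernel $\check m$ itself rather than a difference, so there is no $|h|$ parameter and the balancing is driven solely by the frequency scale $2^\ell$ produced by the Littlewood-Paley pieces. By the dilation $u=ay$, $s=at$ used previously, if we denote the left-hand side of \eqref{eq:FM-ker-1} by $B_{j,k}(m,Q)(x)$, one checks the homogeneity $B_{j,k}(m,Q)(x)=a^{2n/p}B_{j,k}(m,Q^a)(ax)$, and the right-hand side scales by the same factor $a^{2n/p}$; taking $a=1/(2^{\max(j,k)}R)$ we may therefore reduce to $R=2^{-\max(j,k)}$ and, by symmetry, $k\ge j\ge 0$ with $k\ge 1$, and the task becomes bounding the left-hand side by an absolute constant when $Q=B(x,2^{-k})$.

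Next I would decompose $m=\sum_{\ell\in\Z}m_\ell$ exactly as in the earlier proof, with $\supp m_\ell\subset\{2^{\ell-1}\le|\xi|+|\eta|\le 2^{\ell+1}\}$, and for each interval $I\subset(0,\infty)$ define $B_\ell(I)$ by replacing $m$ with $m_\ell$ and restricting the $t$-integration to $I$. By Minkowski's inequality (using $p'\ge 2$), change of variables $w_i=z_i/t$ where $z_i=y_i-x$, and Hausdorff-Young, two complementary estimates follow. The $|\alpha|=0$ version gives
\begin{equation*}
B_\ell(I)\lesssim \|m_\ell\|_{L^p}\Bigl(\int_I t^{-4n/p-1}\,dt\Bigr)^{1/2},
\end{equation*}
while, using $|z_1|\sim 1$ on the rescaled $S_k(Q)$ and inserting the factor $|z_1|^s\lesssim \sum_{|\alpha|=s}|z_1^\alpha|$ before passing to $\partial_{\xi_1}^\alpha m_\ell$ via Hausdorff-Young, the $|\alpha|=s$ version gives
\begin{equation*}
B_\ell(I)\lesssim 2^{\ell(2n/p-s)}\Bigl(\int_I t^{2s-4n/p-1}\,dt\Bigr)^{1/2}.
\end{equation*}
Here \eqref{eq:2-lin-FM-5-3} supplies $\|m_\ell\|_{L^p}\lesssim 2^{\ell(\varepsilon_1+2n/p)}$ for $\ell\le 0$ and $\|m_\ell\|_{L^p}\lesssim 2^{\ell(2n/p-\varepsilon_2)}$ for $\ell>0$, while \eqref{eq:2-lin-FM-5-2} provides the scale-invariant derivative bound $\|\partial_{\xi_1}^\alpha m_\ell\|_{L^p}\lesssim 2^{\ell(2n/p-s)}$ used in the $|\alpha|=s$ estimate.

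Finally, I would split $I=(0,T_\ell]\cup[T_\ell,\infty)$ and apply the $|\alpha|=s$ estimate on the first piece (which is integrable at $0$ precisely because $s>2n/p$) and the $|\alpha|=0$ estimate on the second. Balancing the two yields the optimal choices $T_\ell=2^{\ell(1+\varepsilon_1/s)}$ for $\ell\le 0$ and $T_\ell=2^{\ell(1-\varepsilon_2/s)}$ for $\ell>0$, producing
\begin{equation*}
B_\ell\lesssim 2^{\ell\,\varepsilon_1(1-2n/(ps))}\ \ (\ell\le 0),\qquad B_\ell\lesssim 2^{-\ell\,\varepsilon_2(1-2n/(ps))}\ \ (\ell>0).
\end{equation*}
Both exponents are strictly positive because $p>2n/s$ forces $1-2n/(ps)>0$, so $\sum_{\ell\in\Z}B_\ell$ is a convergent geometric-type series, bounded by an absolute constant, as required. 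The main obstacle is precisely this last balancing step: unlike in Proposition \ref{prop:BuiDuong} there is no $|h|$ factor available to produce decay in $\ell$, so the geometric decay must be extracted entirely from the low-frequency gain $2^{\ell\varepsilon_1}$ and the high-frequency gain $2^{-\ell\varepsilon_2}$ supplied by \eqref{eq:2-lin-FM-5-3}, and one has to verify that after the balance these $\varepsilon$-gains are not swamped by the $2^{\ell(2n/p-s)}$ factor from the derivative estimate nor by the $T_\ell^{s-2n/p}$ factor from the $t$-integration near zero.
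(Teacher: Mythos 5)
Your proposal follows essentially the same route as the paper's proof: the same scaling reduction to $R=2^{-\max(j,k)}$, the same Littlewood--Paley decomposition $m=\sum_\ell m_\ell$, the same pair of Minkowski/Hausdorff--Young estimates (one with $|\alpha|=0$ exploiting \eqref{eq:2-lin-FM-5-3}, one with $|\alpha|=s$ exploiting \eqref{eq:2-lin-FM-5-2}), and the same splitting of the $t$-integral at a scale $T_\ell\sim 2^{\ell(1\pm\varepsilon)}$. The only difference is cosmetic: you pick the exact balancing value $T_\ell=2^{\ell(1\pm\varepsilon_i/s)}$ rather than a generic small $\varepsilon$ with $2n\varepsilon/p<\min(\varepsilon_1,\varepsilon_2)$ as in the paper, and both choices yield geometric decay in $\ell$, so the argument is correct.
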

\begin{proof}
Let $Q=B(x,R)$, $u=ax$ $(a>0)$ and $s=at$, we have
\begin{align*}
B_{j,k}(m,Q)(x)&:=\bigl(
\int_{S_j(Q )}\int_{S_k(Q )}\Bigl(\int_0^\infty \Bigl|
\check m \Bigl(\frac{x-y_1}{t},\frac{x-y_2}{t}\Bigr)\Bigr|^2
\frac{dt}{t^{4n+1}}\Bigr)^{\frac{p'}{2}}dy_1dy_2\bigr)^{\frac1{p'}}
\\
&=a^{2n/p}\biggl(\int_{S_j(Q^a )}\int_{S_k(Q^a )}
\Bigl(\int_{0}^\infty\Bigl|\check m\Bigl(\frac{x^a-u_1}{t},\frac{x^a-u_2}{t}\Bigr)\Bigr|^2\frac{ds}{s^{4n+1}}\Bigr)^{\frac{p'}{2}}du_1du_2
\biggr)^{\frac1{p'}}
\\
&=a^{2n/p}B_{j,k}(m,Q^a)(x^a),
\end{align*}
where $Q^a=B(ax,aR)$, $x^{a}=ax$.
So, taking $a=1/(2^{\max(j,k)}R)$, the estimate
$B_{j,k}(m,Q^a)(x^a)\lesssim  1$
implies the desired estimate. Thus, we only need to show
\eqref{eq:FM-ker-1} in the case $R=1/2^{\max(j,k)}$.
We may also assume $k\ge j$ and hence $k\ge1$.
Then, for $Q=B(x,2^{-k})$, it sufficient to show that
\begin{equation*}\label{eq:BuiDuong2-reduced}
B_{j,k}(m,Q)(x)\lesssim  1.
\end{equation*}
By changing variables,  it is enough to show that

\begin{equation}\label{eq:FM-ker-11}
\bigl(
\int_{S_j(Q_{x})}\int_{S_k(Q_{x})}\Bigl(\int_0^\infty \Bigl|
\check m \Bigl(\frac{y}{t},\frac{z}{t}\Bigr)\Bigr|^2
\frac{dt}{t^{4n+1}}\Bigr)^{\frac{p'}{2}}dydz\bigr)^{\frac1{p'}}
\le C\frac{1}{|Q|^{2/p}}2^{-2n\max(j,k)/p},
\end{equation}
where  $Q_{x}=Q-x$.

For every interval $I$ in $\mathbb R_+$, let
\begin{equation*}\label{eq:BuiDuong2-3}
B_{j,k}(m_\ell,Q,I)(x):=\biggl(
\int_{S_j(Q_x)}\int_{S_k(Q_x)}\biggl(\int_I \Bigl|
\check m_\ell \Bigl(\frac{y}{t},\frac{z}{t}\Bigr)\Bigr|^2
\frac{dt}{t^{4n+1}}\Bigr)^{\frac{p'}{2}}dydz\biggr)^{\frac1{p'}}.
\end{equation*}
The Minkowski inequality, together with Haussdorf-Young inequality implies that


\begin{align*}\label{eq:BuiDuong2-4}
&B_{j,k}(m_\ell,Q,I)(x)
\\&\lesssim(2^kR)^{-|\alpha|}\bigl(
\int_{S_j(Q_{x})}\int_{S_k(Q_{x})}\Bigl(\int_I \Bigl|y^{\alpha}\check m_\ell
\Bigl(\frac{y}{t},\frac{z}{t}\bigr)\Bigr|^2\frac{dt}{t^{4n+1}}
\Bigr)^{\frac{p'}{2}}dydz\bigr)^{\frac1{p'}}
\\
&\lesssim(2^kR)^{-|\alpha|}\bigl(\int_I \bigl(
\int_{S_j(Q_{x})}\int_{S_k(Q_{x})}\Bigl|y^{\alpha}\check m_\ell
\Bigl(\frac{y}{t},\frac{z}{t}\Bigr)\Bigr|^{p'}dydz
\bigr)^{\frac2{p'}}\frac{dt}{t^{4n+1}}\bigr)^{\frac1{2}}
\\
&= C(2^kR)^{-|\alpha|}\bigl(\int_I \bigl(
\int_{S_j(t^{-1}Q_{x})}\int_{S_k(t^{-1}Q_{x})}|y^{\alpha}\check m_\ell
(y,z)|^{p'}dydz
\bigr)^{\frac2{p'}}t^{2|\alpha|+4n/p'}\frac{dt}{t^{4n+1}}\bigr)^{\frac1{2}}
\\
&\lesssim(2^kR)^{-|\alpha|}\bigl(\int_I \bigl(
\int_{\mathbb{R}^n}\int_{\mathbb{R}^n}|\partial_\xi^{\alpha} m_\ell(\xi,\eta)|^{p}d\xi d\eta
\bigr)^{\frac2{p}}t^{2|\alpha|-4n/p-1}dt\bigr)^{\frac1{2}}
\\
&\lesssim (2^kR)^{-|\alpha|}
\bigl(\int_I t^{2|\alpha|-4n/p-1}dt\bigr)^{\frac1{2}}\bigl(
\int_{\mathbb{R}^n}\int_{\mathbb{R}^n}|\partial_\xi^{\alpha} m_\ell(\xi,\eta)|^{p}d\xi d\eta
\bigr)^{\frac1{p}}.
\end{align*}
Next, we consider two cases according to the value of $\ell.$
\par
\textbf{\item Case (a).  $\ell<0$}.   In this case,  taking $|\alpha|=0$ and $I=[2^{\ell(1+\varepsilon)}, \infty)$, the estimate in \eqref{eq:2-lin-FM-5-3} implies that
\begin{equation*}
B_{j,k}(m_\ell,Q,[2^{\ell(1+\varepsilon)}, \infty))
\lesssim
2^{\ell(1+\varepsilon)(-2n/p)}2^{\ell\varepsilon_1}2^{\ell(2n/p)}
=2^{\ell(\varepsilon_1-2\varepsilon n/p)}.
\end{equation*}
In virtue of $2^kR=1$, taking $|\alpha|=s$ and
$I=[0, 2^{\ell(1+\varepsilon)}]$, the estimate in \eqref{eq:2-lin-FM-5-2} implies that

\begin{equation*}
B_{j,k}(m_\ell,Q,[0, 2^{\ell(1+\varepsilon)}])
\lesssim
2^{\ell(1+\varepsilon)(s-2n/p)}2^{-\ell(s-2n/p)}
=2^{\ell\varepsilon(s-2n/p)}.
\end{equation*}
Hence,
\begin{equation*}
B_{j,k}(m_\ell,Q,[0, \infty))\lesssim
2^{\ell(\varepsilon_1-2\varepsilon n/p)}+2^{\ell\varepsilon(s-2n/p)}.
\end{equation*}

\par
\textbf{\item Case (b).  $\ell\ge 0$}.   By repeating the same arguments as in case (a), we get
\begin{equation*}
B_{j,k}(m_\ell,Q,[2^{\ell(1-\varepsilon)}, \infty))
\lesssim
2^{\ell(1-\varepsilon)(-2n/p)}2^{-\ell\varepsilon_2}2^{\ell(2n/p)}
=2^{\ell(2\varepsilon n/p-\varepsilon_2)}
\end{equation*}
and
\begin{equation*}
B_{j,k}(m_\ell,Q,[0, 2^{\ell(1+\varepsilon)}])
\lesssim
2^{\ell(1-\varepsilon)(s-2n/p)}2^{-\ell(s-2n/p)}
=2^{-\ell\varepsilon(s-2n/p)}.
\end{equation*}
Therefore,
\begin{equation*}
B_{j,k}(m_\ell,Q,[0, \infty))\lesssim
2^{\ell(2\varepsilon n/p-\varepsilon_2)}+2^{-\ell\varepsilon(s-2n/p)}.
\end{equation*}

Choosing $\varepsilon>0$ so that
$2n\varepsilon/p<\min(\varepsilon_1, \varepsilon_2)$,  we obtain from case (a) and case (b)
\begin{align*}
B_{j,k}(m,Q)(x)&\le \sum_{\ell<0}B_{j,k}(m_\ell,Q,[0, \infty))
+ \sum_{\ell\ge 0}B_{j,k}(m_\ell,Q,[0, \infty))
\\
&\lesssim \sum_{\ell<0}
[2^{\ell(\varepsilon_1-2\varepsilon n/p)}+2^{\ell\varepsilon(s-2n/p)}]
+ \sum_{\ell\ge 0}
[2^{\ell(2\varepsilon n/p-\varepsilon_2)}+2^{-\ell\varepsilon(s-2n/p)}]\\
&\lesssim 1.
\end{align*}
This completes the proof of Proposition \ref{prop:BuiDuong2}.
\end{proof}
\textbf{Proofs of Theorems \ref{31} and \ref{34}.}
First, we assume that Theorems \ref{1}--\ref{3} are true,
whose proofs will be postponed to the next sections.\\
(a) The case $p_0>2n/s$.
By Proposition \ref{prop:BuiDuong} and Proposition \ref{prop:BuiDuong2}, it is easy to see
 that the associated kernel of $  \mathfrak{T}_m$ satisfies the conditions
 (H2) and (H3).
Since we have supposed (H1) from the beginning,
applying Theorems \ref{1}--\ref{3},
we obtain the desired conclusions in Theorem \ref{31} and  Theorem \ref{34}.\\
(b) The case $p_0=2n/s$. By the property of $A_p$ weights,
there exists a real number $\tilde p_0$ satisfying $p_0=2n/s<\tilde p_0<\min(p_1,p_2,2)$ and
$\vec\omega\in A_{\vec p/{\tilde p_0}}$ (see \cite{anh} or \cite{lerner}).
Therefore, by step (a), we finish the proof of Theorem \ref{31} and Theorem \ref{34}.
\section{Proofs of Theorems \ref{1} and \ref{2}}\label{SEC4}
Let us recall the definition of $A_{\vec{P}}$ weights introduced by Lerner et al.\cite{lerner}.
\begin{definition}\label{def3}
Let $\vec{P}=(p_1,\cdots,p_m)$ and $1/p=1/p_1+\cdots+1/p_m$ with $1\leq p_1,\cdots,p_m<\infty.$  Given
$\vec{\omega}=(\omega_1,\cdots, \omega_m)$, set $\nu_{\vec{\omega}}=\prod_{i=1}^m\omega_i^{p/ p_i}.$
We say that $\vec{\omega}$ satisfies the $A_{\vec{P}}$ condition if
$$\sup_Q \big(\frac{1}{|Q|}\int_Q \prod_{i=1}^m\omega_i^{\frac{p}{p_i}}
\big)^{\frac{1}{p}}\prod_{i=1}^m\big( \frac{1}{|Q|}\int_Q
\omega_i^{1-p_i'} \big)^{\frac{1}{p_i'}}< \infty,$$ when
$p_i=1,$ $\big( \frac{1}{|Q|}\int_Q
\omega_i^{1-p_i'}\big)^{\frac{1}{p_i'}}$ is understood as
$(\inf_Q \omega_i)^{-1}.$
\end{definition}

The new maximal function $\mathcal{M}_p$ can be defined by
$$\mathcal{M}_p(\vec{f})(x)=\sup_{Q\ni x} \prod_{j=1}^m \big( \frac{1}{|Q|}\int_Q |f_j(y_j)|^pdy_j\big)^{1/p}.$$
When $p=1$, we get $\mathcal{M}_1=\mathcal{M}$, which was introduced in \cite{lerner}.

In order to prove our results, we need the following lemmas.
\begin{lemma}\label{22}
 For any $1<p_0\leq p_1,\dots,p_m<\infty$ and $p$ so that $1/p=1/p_1+\cdots +1/p_m$ and $\vec{\omega}\in A_{\vec{P}/p_0},$ where $\vec{P}/p_0=(p_1/p_0,\dots, p_m/p_0)$, the following weighted estimates hold.
 \begin{enumerate}
   \item If there is no $p_i=p_0$, then $\|\mathcal{M}_{p_0}(\vec{f}) \|_{L^{p}(\nu_{\vec{w}})}\leq C \prod_{i=1}^{m} \|f_i\|_{L^{p_i}(w_i)} .$\\
   \item If there is a $p_i=p_0$, then $\|\mathcal{M}_{p_0}(\vec{f}) \|_{L^{p,\infty}(\nu_{\vec{w}})}\leq C \prod_{i=1}^{m} \|f_i\|_{L^{p_i}(w_i)}. $
 \end{enumerate}
\end{lemma}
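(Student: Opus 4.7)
\textbf{Proof proposal for Lemma \ref{22}.} My plan is to reduce the statement to the classical weighted bounds for the multilinear maximal function $\mathcal{M}$ of Lerner--Ombrosi--P\'erez--Torres--Trujillo-Gonz\'alez by a simple rescaling. The key observation is that, pulling the $p_0$-th root outside the supremum,
\begin{equation*}
\mathcal{M}_{p_0}(\vec f)(x)
=\Bigl(\sup_{Q\ni x}\prod_{j=1}^m\frac1{|Q|}\int_Q|f_j(y_j)|^{p_0}dy_j\Bigr)^{1/p_0}
=\bigl(\mathcal{M}(|f_1|^{p_0},\dots,|f_m|^{p_0})(x)\bigr)^{1/p_0}.
\end{equation*}
Setting $g_j=|f_j|^{p_0}$, $q_j=p_j/p_0$, and $q=p/p_0$, one checks $1/q=\sum_{j=1}^m 1/q_j$ and $q_j\ge 1$, with $q_j=1$ precisely when $p_j=p_0$. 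Moreover, by Definition \ref{def3}, the condition $\vec\omega\in A_{\vec P/p_0}$ is exactly the statement $\vec\omega\in A_{\vec Q}$ with $\vec Q=(q_1,\dots,q_m)$, where the common dual-weight averages at the endpoints $q_j=1$ reduce to $(\inf_Q\omega_j)^{-1}$ in the usual way.

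Next, I would invoke the theorem of Lerner et al.\ (see \cite{lerner}): under $\vec\omega\in A_{\vec Q}$, $\mathcal{M}$ maps $L^{q_1}(\omega_1)\times\cdots\times L^{q_m}(\omega_m)$ into $L^{q}(\nu_{\vec\omega})$ when every $q_j>1$, and into $L^{q,\infty}(\nu_{\vec\omega})$ when some $q_j=1$. Raising the resulting inequality to the power $1/p_0$ and using $\|g_j\|_{L^{q_j}(\omega_j)}=\|f_j\|_{L^{p_j}(\omega_j)}^{p_0}$, together with $\|\mathcal{M}(\vec g)^{1/p_0}\|_{L^{p}(\nu_{\vec\omega})}=\|\mathcal{M}(\vec g)\|_{L^{p/p_0}(\nu_{\vec\omega})}^{1/p_0}$ (and analogously for weak $L^{p,\infty}$), the strong and weak estimates in (1) and (2) follow at once with the same constant.

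The only point that might look subtle is verifying that the endpoint case $p_i=p_0$ for some $i$ genuinely corresponds to the $q_i=1$ endpoint of the Lerner et al.\ theorem, which forces the weak-type conclusion, and that the convention on the $A_{\vec P/p_0}$ condition when some $q_i=1$ (namely $(\inf_Q \omega_i)^{-1}$ in place of the reverse H\"older average) matches their hypothesis verbatim. Once this identification is made, no further work is needed; in particular, the main obstacle one might anticipate---re-proving a weighted Lerner-type inequality from scratch---is entirely bypassed by the $p_0$-th power rescaling above.
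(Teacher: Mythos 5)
Your proof is correct, and it is essentially the standard argument that the paper outsources: the paper's own ``proof'' merely cites Bui and Duong \cite{bui} for part (1) and says (2) is similar, while Bui--Duong's lemma is established by exactly the $p_0$-th power rescaling $\mathcal{M}_{p_0}(\vec f)=\mathcal{M}(|f_1|^{p_0},\dots,|f_m|^{p_0})^{1/p_0}$ that reduces the statement to the Lerner et al.\ theorems for $\mathcal{M}$ with exponents $\vec P/p_0$. The identification $A_{\vec P/p_0}=A_{\vec Q}$ with $\vec Q=(p_1/p_0,\dots,p_m/p_0)$ is literally Definition \ref{def3}, and the norm bookkeeping $\|g_j\|_{L^{q_j}(\omega_j)}=\|f_j\|_{L^{p_j}(\omega_j)}^{p_0}$ and $\|\mathcal{M}(\vec g)^{1/p_0}\|_{L^{p}(\nu_{\vec\omega})}=\|\mathcal{M}(\vec g)\|_{L^{q}(\nu_{\vec\omega})}^{1/p_0}$ (with the analogous weak-type identity) is exactly right, so nothing further is needed.
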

\begin{proof}
The proof of (1) was given in \cite{bui}. The proof of (2) is similar to (1), we omit the proof.
\end{proof}
\begin{lemma}\label{21}(\cite{fefferman})
 Let $\omega$ be an $A_\infty$ weight. Then there exist constant C and $\rho>0$ depending upon the $A_\infty$ condition of
$\omega$ such that, for all $\lambda,\varepsilon>0,$
$$\omega(\{y\in \mathbb{R}^n: M f(y)>\lambda, M^\sharp f(y)\leq \lambda \epsilon\})\leq C \varepsilon^\rho\omega(\{ y\in \mathbb{R}^n: Mf(y)>\frac 12 \lambda\}).$$
As consequences, we have the following estimates for $\delta>0.$
\par (1) Let $\varphi: (0,\infty)\rightarrow (0,\infty)$ be a doubling, that is, $\varphi(2a)\leq C \varphi(a)$ for $a>0$. Then, there exists a constant C depending upon the $A_\infty$ condition of $\omega$ and doubling condition of $\varphi$ such that
\begin{equation}
\sup_{\lambda>0}\varphi(\lambda)\omega(\{y\in \mathbb{R}^n: M_\delta f(y)>\lambda\})\leq C \sup_{\lambda>0}\varphi(\lambda)\omega(\{y\in \mathbb{R}^n: M^\sharp_\delta f(y)>\lambda\})
\end{equation}
for every function such that the left-hand side is finite.
\par
(2) Let $0<p<\infty.$ There exists a positive constant C depending upon the $A_\infty$ condition and $p$ such that
\begin{equation}
\int_{\mathbb{R}^n}(M_\delta f(x))^p\omega(x)dx\leq C \int_{\mathbb{R}^n}(M^\sharp_\delta f(x))^p\omega(x)dx
\end{equation}
for every function such that the left-hand side is finite.
\end{lemma}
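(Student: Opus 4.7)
The plan is to establish Lemma \ref{21} through the classical Fefferman--Stein good-$\lambda$ strategy: first prove the main distributional inequality using a Whitney-type decomposition and a local oscillation estimate, then upgrade it to a weighted inequality using the $A_\infty$ property, and finally derive the two consequences through layer-cake integration.

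For the main inequality, I would decompose the open set $E_\lambda = \{Mf>\lambda\}$ (assuming it has finite measure after a standard truncation/approximation) into its Whitney cubes $\{Q_j\}$. For each $Q_j$ one can find a nearby reference point $\bar x_j\in cQ_j$ (lying outside $E_\lambda$) with $Mf(\bar x_j)\le\lambda$. The key local claim is that
\[
\bigl|\{y\in Q_j:\, Mf(y)>\lambda,\ M^\sharp f(y)\le \lambda\varepsilon\}\bigr|\le C\varepsilon |Q_j|
\]
whenever the set on the left is nonempty (otherwise the contribution vanishes). This is proved by splitting $f=(f-f_{\widetilde Q_j})\chi_{\widetilde Q_j}+g$ over a suitable enlargement $\widetilde Q_j$, bounding $Mg$ pointwise by $Mf(\bar x_j)\le\lambda$ plus something controlled by $M^\sharp f(y_0)\le\lambda\varepsilon$, and applying the weak-$(1,1)$ boundedness of $M$ to the local piece, whose $L^1$ average is itself controlled by $\lambda\varepsilon$.

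To convert this Lebesgue-measure bound into the claimed weighted one, I would invoke the reverse-H\"older-type characterization of $A_\infty$: there exist $C_2,\rho>0$ such that $\omega(E)/\omega(Q)\le C_2(|E|/|Q|)^\rho$ for every cube $Q$ and every measurable $E\subseteq Q$. Applied to each Whitney cube this yields $\omega\bigl(\{y\in Q_j:\, Mf(y)>\lambda,\ M^\sharp f(y)\le\lambda\varepsilon\}\bigr)\le C\varepsilon^\rho\omega(Q_j)$, and summing over $j$, together with the inclusion $\bigcup_j Q_j\subseteq\{Mf>\lambda/2\}$ (valid after a mild enlargement of the Whitney constants), produces the desired distributional estimate. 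For the consequences, one first replaces $f$ by $|f|^\delta$ (which converts $M$ and $M^\sharp$ into $M_\delta$ and $M^\sharp_\delta$ up to powers) and then applies the good-$\lambda$ inequality inside a layer-cake decomposition: for (2) write $\int(M_\delta f)^p\omega\,dx=p\int_0^\infty\lambda^{p-1}\omega(\{M_\delta f>\lambda\})d\lambda$, split the level set by $M^\sharp_\delta f$, and choose $\varepsilon$ small so that the $C\varepsilon^\rho$ factor is absorbed back to the left-hand side; for (1) replace $\lambda^{p-1}d\lambda$ by $d\varphi(\lambda)$ and use the doubling of $\varphi$ to handle $\varphi(2\lambda)\le C\varphi(\lambda)$.

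The main obstacle is the local oscillation estimate in the first step: one must carefully combine the defining property of Whitney cubes, the pointwise control of the tail of $Mf$ via the reference point $\bar x_j$, and the definition of $M^\sharp f$, in order to reduce the estimate to the weak-$(1,1)$ bound of $M$ applied to a localized piece. The balance between the dilation factor used in $\widetilde Q_j$ and the Whitney constants must be chosen so that the tail of $M$ picks up only a controllable multiple of $\lambda$. A secondary subtlety is the absorption step in consequence (2), which requires a priori finiteness of $\int(M_\delta f)^p\omega$; this is standard and handled by truncating $f$ and passing to the limit.
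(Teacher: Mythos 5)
This lemma is stated in the paper without proof; it is cited directly from Fefferman and Stein \cite{fefferman}, and versions of it with the $M_\delta$, $M^\sharp_\delta$ variants appear e.g.\ in García-Cuerva and Rubio de Francia \cite{cuerva}. Your sketch correctly reproduces the classical good-$\lambda$ argument behind it: Whitney decomposition of a level set of $Mf$, the local oscillation bound (via a reference point just outside the level set, the splitting $f=(f-f_{\widetilde Q_j})\chi_{\widetilde Q_j}+g$, and the weak-$(1,1)$ bound for $M$), conversion to weighted measure through the $A_\infty$ comparability $\omega(E)/\omega(Q)\le C(|E|/|Q|)^\rho$, and the substitution $f\mapsto |f|^\delta$ to pass to $M_\delta$, $M^\sharp_\delta$. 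Since the paper gives no proof, there is nothing to compare against beyond noting that this is indeed the intended argument.

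One small correction to the derivation of consequence (1): it is not a layer-cake integration against ``$d\varphi(\lambda)$'' --- $\varphi$ is only assumed doubling, so it need not be monotone or differentiable and such an integral is not defined. The correct route is direct: for each fixed $\lambda$, split $\{M_\delta f>\lambda\}$ according to whether $M^\sharp_\delta f$ is $\le\varepsilon\lambda$ or $>\varepsilon\lambda$, apply the good-$\lambda$ inequality to the first piece to get $C\varepsilon^{\rho}\omega(\{M_\delta f>\lambda/2^{1/\delta}\})$, multiply by $\varphi(\lambda)$, use doubling of $\varphi$ to replace $\varphi(\lambda)$ by $\varphi(\lambda/2^{1/\delta})$ (and by $\varphi(\varepsilon^{1/\delta}\lambda)$ for the second piece), take $\sup_{\lambda>0}$, and absorb the first term into the left-hand side by choosing $\varepsilon$ small --- which is exactly why a priori finiteness of the left-hand side is required. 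Your consequence (2) is handled exactly as you describe, and the finiteness caveat you mention is likewise the key to the absorption there.
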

\begin{lemma}\label{23} Let $T$ be a
multilinear square function with a kernel satisfying conditions (H1), (H2) and
(H3) for some $1\leq p_0<\infty.$ For any $0<\delta<\min\{1,\frac{p_0}{m}\}$,
there is a constant $C<\infty$  such that for any bounded and compactly
supported $f_j, (j=1,\dots,m).$

$$
M^{\sharp}_\delta T(\vec{f})(x)\leq C  \mathcal{M}_{p_0}(\vec{f})(x) .
$$

\end{lemma}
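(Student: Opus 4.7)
I will follow the classical sharp-function strategy for multilinear operators (as in Bui--Duong), adapted to the square-function setting: split each input into a local piece supported on a dilate of the testing ball and a far piece, and handle the resulting $2^m$ terms by (H1) plus Kolmogorov for the fully local one and by (H2) for all others. Concretely, fix $x\in\mathbb{R}^n$ and a ball $B\ni x$ with center $x_B$ and radius $r_B$; set $Q:=2B$, so that $B=\tfrac12 Q$. Decompose $f_j=f_j^0+f_j^\infty$ with $f_j^0=f_j\chi_Q$ and expand
$$
T(\vec f)(y)=\sum_{\alpha\in\{0,\infty\}^m}T(f_1^{\alpha_1},\ldots,f_m^{\alpha_m})(y)=:\sum_\alpha T^\alpha(y).
$$
Choose $c_\alpha:=T^\alpha(x_B)$ for $\alpha\neq(0,\ldots,0)$, $c_{(0,\ldots,0)}:=0$, and $c:=\sum_\alpha c_\alpha$. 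Since $\delta<1$, the subadditivity $|a+b|^\delta\le|a|^\delta+|b|^\delta$ reduces the task to proving $\bigl(|B|^{-1}\int_B|T^\alpha-c_\alpha|^\delta\bigr)^{1/\delta}\lesssim\mathcal{M}_{p_0}(\vec f)(x)$ for each $\alpha$. For $\alpha=(0,\ldots,0)$, hypothesis (H1) gives $T\colon(L^{p_0})^m\to L^{p_0/m,\infty}$, and since $\delta<p_0/m$, Kolmogorov's inequality together with $|B|\sim|Q|$ yields
$$
\Bigl(\frac{1}{|B|}\int_B|T(\vec f^0)|^\delta\Bigr)^{1/\delta}\lesssim|B|^{-m/p_0}\|T(\vec f^0)\|_{L^{p_0/m,\infty}}\lesssim\prod_{j=1}^m\Bigl(\frac{1}{|Q|}\int_Q|f_j|^{p_0}\Bigr)^{1/p_0}\le\mathcal{M}_{p_0}(\vec f)(x).
$$

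For each $\alpha\neq(0,\ldots,0)$, partition the $\vec z$-integration into annular products $\prod_k S_{j_k}(Q)$, taking $j_k=0$ when $\alpha_k=0$ and $j_k\ge 1$ when $\alpha_k=\infty$; every relevant tuple satisfies $(j_1,\ldots,j_m)\neq(0,\ldots,0)$. For $y\in B$, Minkowski's inequality for the $L^2(dt/t)$ norm followed by H\"older's inequality in $\vec z$ with exponents $(p_0,\ldots,p_0,p_0')$ gives
\begin{align*}
|T^\alpha(y)-T^\alpha(x_B)|&\le\sum_{(j_1,\ldots,j_m)}\Bigl(\int_{\prod_k S_{j_k}(Q)}\|K_t(y,\vec z)-K_t(x_B,\vec z)\|_{L^2(dt/t)}^{p_0'}\,d\vec z\Bigr)^{1/p_0'}\\
&\qquad\times\prod_{k=1}^m\Bigl(\int_{S_{j_k}(Q)}|f_k|^{p_0}\Bigr)^{1/p_0}.
\end{align*}
Since $y,x_B\in\tfrac12 Q$, (H2) with exponent $\gamma>n/p_0$ majorizes the kernel factor by $C|y-x_B|^{m(\gamma-n/p_0)}|Q|^{-m\gamma/n}2^{-m\gamma j_0}$, where $j_0=\max_k j_k$. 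Using $|y-x_B|\le r_B$, $|Q|\sim r_B^n$, $|2^{j_0}Q|\sim 2^{j_0 n}r_B^n$, and $\prod_k\bigl(\int_{S_{j_k}(Q)}|f_k|^{p_0}\bigr)^{1/p_0}\le|2^{j_0}Q|^{m/p_0}\mathcal{M}_{p_0}(\vec f)(x)$, each summand is dominated by $2^{-mj_0(\gamma-n/p_0)}\mathcal{M}_{p_0}(\vec f)(x)$, and the geometric series in $j_0$ converges because $\gamma>n/p_0$. Taking $\sup_{y\in B}$ disposes of each nonlocal term, and assembling all $2^m$ contributions and supremizing over balls $B\ni x$ yields $M^\sharp_\delta T(\vec f)(x)\lesssim\mathcal{M}_{p_0}(\vec f)(x)$.

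The main obstacle is the multilinear bookkeeping: correctly choosing the H\"older exponents, verifying that (H2) applies on every product of annuli (including the mixed cases where some $\alpha_k=0$ and the corresponding $j_k=0$), and ensuring that the doubling factor $|2^{j_0}Q|^{m/p_0}$ is fully absorbed into the geometric decay $2^{-m\gamma j_0}$ furnished by (H2). Condition (H3) is not needed here; it will be invoked for companion estimates in the proofs of the other theorems in the paper.
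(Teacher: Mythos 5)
Your overall strategy mirrors the paper's: split each $f_j$ into a local piece on a dilate of the testing ball and a far piece, control the fully local term via (H1) and Kolmogorov, and control the remaining terms via (H2) together with H\"older. Your treatment of each individual piece is correct, and the kernel-difference quantity you bound is exactly the right one. The flaw is in the reduction step. You assert $T(\vec f)(y)=\sum_\alpha T^\alpha(y)$ and then invoke $\delta$-subadditivity with $c_\alpha=T^\alpha(x_B)$ for $\alpha\neq\vec 0$. But $T$ is a square function, not a linear operator: writing $F^\alpha(y,t)=\int K_t(y,\vec z)\prod_j f_j^{\alpha_j}(z_j)\,d\vec z$, one has $T(\vec f)(y)=\bigl\|\sum_\alpha F^\alpha(y,\cdot)\bigr\|_{L^2(dt/t)}$, which is at most $\sum_\alpha T^\alpha(y)$ but not equal to it. With $c=\sum_{\alpha\ne\vec 0}\|F^\alpha(x_B,\cdot)\|_{L^2(dt/t)}$, the inequality $|T(\vec f)(y)-c|\le\sum_\alpha|T^\alpha(y)-c_\alpha|$ fails in general: if, say, $F^{0\infty}(y,\cdot)=-F^{\infty 0}(y,\cdot)$ for all $y$ while $F^{00}\equiv 0$ and $F^{\infty\infty}$ is independent of $y$, then the right-hand side vanishes while the left-hand side does not, since $c$ is a sum of norms which overcounts the cancelling far pieces relative to the norm of the sum.

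The correct choice, and the one the paper makes, is to take $c_Q$ to be the $L^2(dt/t)$ norm of the single $t$-function $c_{Q,t}=\sum_{\alpha\ne\vec 0}F^\alpha(x,t)$. Then one application of the reverse triangle inequality in $L^2(dt/t)$ gives $|T(\vec f)(z)-c_Q|\le\|F(z,\cdot)-c_{Q,\cdot}\|_{L^2(dt/t)}$, and the ordinary triangle inequality inside that norm splits it into $T(\vec f^0)(z)$ plus the terms $\|F^\alpha(z,\cdot)-F^\alpha(x,\cdot)\|_{L^2(dt/t)}$ for $\alpha\ne\vec 0$. Your subsequent work (Kolmogorov for the local term, Minkowski/H\"older/(H2) for the kernel differences) then goes through unchanged, since the bound you derive for $|T^\alpha(y)-T^\alpha(x_B)|$ is in fact a bound for $\|F^\alpha(y,\cdot)-F^\alpha(x_B,\cdot)\|_{L^2(dt/t)}$. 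One further point: (H3) is in fact used in this lemma, contrary to your closing remark, since it is what guarantees that $c_{Q,t}$ (equivalently, your $c_\alpha$) is finite, so that $c_Q$ is an admissible constant in the sharp maximal function.
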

\begin{proof}
Fix a point $x\in \mathbb{R}^n$ and a ball $Q$ containing $x$.
For $0<\delta<\min\{1,\frac{p_0}{m}\},$ we only need to show that there exists a
constant $c_Q$ such that
$$
\big( \frac{1}{|Q|}\int_{Q}\big| T(\vec{f})(z)-c_Q\big|^\delta dz
\big)^{1/\delta}\leq C \mathcal{M }_{p_0}\vec{f}(x).
$$
For each $j=1,\dots, m,$ we decompose $f_j=f_j^0+f_j^\infty$,
where $f_j^0=f_j\chi_{Q^*}$, $Q^*$ is the ball with center at $x$ and
having eight times bigger radius than $Q$.


First, we claim that
\begin{equation*}
\bigl( \int_0^\infty \Bigl|\int_{\mathbb{R}^{nm}}K_t(x,y_1,\cdots,y_m)
\prod_{j=1}^m f_j^{\alpha_j}(y_j)d\vec{y}\Bigr|^2\frac{dt}{t} \bigr)^{1/2}
<\infty, \quad\quad \hbox{for\ }\vec\alpha\ne \vec 0.
\end{equation*}
In fact,
set
\begin{equation*}
c_{Q,\alpha}
=\bigl( \int_0^\infty \Bigl|\int_{\mathbb{R}^{nm}}K_t(x,y_1,\cdots,y_m)
\prod_{j=1}^m f_j^{\alpha_j}(y_j)d\vec{y}\Bigr|^2\frac{dt}{t} \bigr)^{1/2}.
\end{equation*}

By the Minkowski inequality, we have
\begin{equation*}
c_{Q,\alpha}
\le \int_{\mathbb{R}^{nm}}\bigl( \int_0^\infty \Bigl|K_t(x,y_1,\cdots,y_m)
\Bigr|^2\frac{dt}{t} \bigr)^{1/2}\prod_{j=1}^m|f_j^{\alpha_j}(y_j)|d\vec{y}.
\end{equation*}
To estimate $c_{Q,\alpha}$, we may assume $\alpha_1=\dots=\alpha_l=0$ and
$\alpha_{l+1}=\dots=\alpha_m=\infty$.
Since $\vec f_j\in L_c^\infty(\mathbb R^n)$,
there exists the smallest $j_0\in\mathbb N$ satisfying
$\operatorname{supp}\vec f\subset 2^{j_0}Q^*$. Then,
by using the H\"older inequality and condition (H3), one may obtain that
\begin{align*}
c_{Q,\alpha}&\le
\bigl(\int_{(2^{j_0}Q^*\setminus Q^*)^{m-l}\times (S_0(Q^*))^l}
\bigl( \int_0^\infty \Bigl|K_t(x,\vec y)\Bigr|^2\frac{dt}{t} \bigr)^{p_0'/2}
d\vec y \bigr)^{1/p_0'}
\\
&\hspace{4cm}\times
\bigl(\int_{(2^{j_0}Q^*\setminus Q^*)^{m-l}\times (S_0(Q^*))^l}
\prod_{j=1}^m|f_j^{\alpha_j}(y_j)|^{p_0}d\vec{y}\bigr)^{1/p_0}
\\
&\le \sum_{k=1}^{j_0}
\bigl(\int_{(S_k(Q^*))^{m-l}\times (S_0(Q^*))^l}
\bigl( \int_0^\infty \Bigl|K_t(x,\vec y)\Bigr|^2\frac{dt}{t} \bigr)^{\frac{p_0'}2}
d\vec y \bigr)^{\frac{1}{p_0'}}
\bigl(\int_{\mathbb R^{nm}}
\prod_{j=1}^m|f_j^{\alpha_j}(y_j)|^{p_0}d\vec{y}\bigr)^{\frac1{p_0}}
\\
&\le C\sum_{k=1}^{j_0} \frac{2^{-nmk/p_0}}{|Q|^{m/p_0}}
\bigl(\int_{\mathbb R^{nm}}
\prod_{j=1}^m|f_j^{\alpha_j}(y_j)|^{p_0}d\vec{y}\bigr)^{\frac1{p_0}}
<\infty.
\end{align*}

Let $c_{Q,t}=\sum_{\vec{\alpha},\vec{\alpha}\neq \vec{0}}
\int_{\mathbb{R}^{nm}}K_t(x,y_1,\cdots,y_m)
\prod_{j=1}^m f_j^{\alpha_j}(y_j)d\vec{y}$ and $c_Q=\big( \int_0^\infty |c_{Q,t}|^2\frac{dt}{t} \big)^{1/2},$
where $\vec{\alpha}=(\alpha_1,\cdots,\alpha_m)$ with $\alpha_i=0$ or $\infty$.
 Then we have
$$\aligned
{} &\big( \frac{1}{|Q|}\int_{Q}\big| T(\vec{f})(z)-c_Q\big|^\delta dz
 \big)^{1/\delta}
\\
&\le  C \big( \frac{1}{|Q|}\int_{Q}\big(\int_0^\infty\big|
\int_{\mathbb{R}^{nm}}K_t(z,\vec{y})\prod_{j=1}^m f_j^0(y_j)d\vec{y}
\big|^2\frac{dt}{t}\big)^{\delta/2} dz\big)^{1/\delta}
\\
&\quad +  C \sum_{\vec{\alpha},\vec{\alpha}\neq \vec{0}}
\big( \frac{1}{|Q|}\int_{Q}\big(\int_0^\infty\big| \int_{\mathbb{R}^{nm}}
\big(K_t(z,\vec{y})-K_t(x,\vec{y})\big)
\prod_{j=1}^m f_j^{\alpha_j}(y_j)d\vec{y}\big|^2\frac{dt}{t}
\big)^{\delta/2} dz\big)^{1/\delta}
\\
&= I_{\vec{0}} + C\sum_{\vec{\alpha}\neq \vec{0}} I_{\vec{\alpha}}.
\endaligned
$$

By condition (H1), $T$ maps $L^{p_0}\times \cdots \times L^{p_0}$
into $L^{p_0/m, \infty}$. This together with the Kolmogorov inequality tells
us that
$$
I_{\vec{0}}\leq C  ||T(f^0)||_{L^{p_0/m,\infty}(Q,\frac{dx}{|Q|})}
\leq C \prod_{j=1}^m \big( \frac{1}{|Q^*|}\int_{Q^*}|f_j(z)|^{p_0}dz
\big)^{p_0}\leq C  \mathcal{M}_{p_0}(\vec{f})(x).
$$
\par
To estimate $I_{\vec{\alpha}}$ for $\vec{\alpha}\neq \vec{0}$, we may assume
$\alpha_1=\dots=\alpha_l=\infty$ and $\alpha_{l+1}=\dots=\alpha_m=0$.
By condition (H2), it yields that
\begin{align*}
&\int_{\mathbb{R}^{nm}}\big(\int_0^\infty
\big| K_t(z,\vec{y})-K_t(x,\vec{y})\big|^2\frac{dt}{t}\big)^{1/ 2}
\prod_{j=1}^m| f_j^{\alpha_j}(y_j)|d\vec{y}\\
  &\leq C \int_{ ((Q^*)^c)^l\times  (Q^*)^{m-l}} \big(\int_0^\infty
   \big| K_t(z,\vec{y})-K_t(x,\vec{y})\big|^2\frac{dt}{t}\big)^{\frac1 2}
   \prod_{j=1}^l \big|f_j^\infty(y_j)\big|dy_j\prod_{j=l+1}^m
   \big|f_j^0(y_j)\big|dy_j\\
 &\leq C \sum_{j_1,\dots, j_l\geq 1}\int_{(Q^*)^{m-l}}\int_{S_{j_l}(Q^*)}
 \cdots \int_{S_{j_1}(Q^*)}\big(\int_0^\infty
  \big| K_t(z,\vec{y})-K_t(x,\vec{y})\big|^2\frac{dt}{t}\big)^{\frac1 2}\\
      &\quad \quad\times \prod_{j=1}^l \big|f_j^\infty(y_j)\big|dy_j
     \prod_{j=l+1}^m  \big|f_j^0(y_j)\big|dy_j\\
          &\leq C \sum_{j_1,\dots, j_l\geq 1}\big(\int_{(Q^*)^{m-l}}
      \int_{S_{j_l}(Q^*)}\cdots  \int_{S_{j_1}(Q^*)}\big(\int_0^\infty
      \big| K_t(z,\vec{y})-K_t(x,\vec{y})\big|^2\frac{dt}{t}
      \big)^{p'_0/ 2}d\vec{y}\big)^{1/p'_0}\\
 &\quad \quad\times \prod_{j=1}^l \big(\int_{2^{j_k}Q^*}|f_j(y_j)|^{p_0}dy_j
 \big)^{1/ p_0}\prod_{j=l+1}^m  \big(\int_{Q^*}|f_j(y_j)|^{p_0}dy_j
 \big)^{1/ p_0}\\
 &\leq  C\sum_{j_1,\dots, j_l\geq 1}
 \frac{|x-z|^{m(\delta-n/p_0)}}{|Q^*|^{m\delta/n}}2^{-m\delta j_0}\\
 &\quad \quad\times \prod_{j=1}^l \big(\int_{2^{j_k}Q^*}|f_j(y_j)|^{p_0}dy_j
 \big)^{1/ p_0}\prod_{j=l+1}^m  \big(\int_{Q^*}|f_j(y_j)|^{p_0}dy_j
 \big)^{1/ p_0}\\
 &\leq C\sum_{j_0 \geq 1} \frac{|x-z|^{m(\delta-n/p_0)}}{|Q^*|^{m\delta/n}}
 m2^{-m\delta j_0}2^{j_0mn/p_0}|Q^*|^{m/p_0}
 \prod_{j=1}^m  \big(\frac{1}{|2^{j_0}Q^*|}\int_{2^{j_0}Q^*}|f_j(y_j)|^{p_0}
 dy_j \big)^{\frac{1}{p_0}}\\
 &\leq
 C\sum_{j_0 \geq 1}\frac{|x-z|^{m(\delta-n/p_0)}}{|Q^*|^{m(\delta/n-1/p_0)}}
 m2^{-mj_0(\delta-n/p_0)} \mathcal{M}_{p_0}(\vec{f})(x)\\
 &\leq C \mathcal{M}_{p_0}(\vec{f})(x).
\end{align*}
 Here, we use the condition $\delta>n/p_0$ and $x,z\in Q$, where $j_0=\max\{j_1,\dots,j_l\}.$
Then, by the Minkowski inequality, we get
\begin{eqnarray*}
  I_{\vec{\alpha}}&\leq & C\big( \frac{1}{|Q|}\int_{Q}\big( \int_{\mathbb{R}^{nm}}\big(\int_0^\infty\big| K_t(z,\vec{y})-K_t(x,\vec{y})\big|^2\frac{dt}{t}\big)^{1/ 2}\prod_{j=1}^m| f_j^{\alpha_j}(y_j)|d\vec{y}\big)^{\delta} dz\big)^{1/\delta}\\
  &\leq & C \mathcal{M}_{p_0}(\vec{f})(x).
\end{eqnarray*}
Thus, we finish the proof of Lemma \ref{23}.
\end{proof}

%
\begin{lemma}\label{22-2}
Suppose $K_t$ satisfies $(H3)$ for some $1\le p_0<\infty$. Suppose $f_i\in C_c^\infty(\mathbb{R}^n)$ and
$\operatorname{supp}f_i \subset B(0,R)$ for any $i=1,\cdots,m.$
 Then there is a constant $C<\infty$  such that for $|x|>3R,$
 the following estimate holds uniformly.
$$
T(\vec{f})(x)\leq C  \mathcal{M }_{p_0}(\vec{f})(x).
$$

\end{lemma}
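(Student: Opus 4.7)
The plan is to exploit a single-annulus reduction: since $|x|>3R$, all supports $\operatorname{supp} f_j\subset B(0,R)$ lie inside one dyadic annulus $S_{j_0}(Q)$ around $x$, so only a single term of (H3) needs to be invoked.

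First I would make the key geometric observation. For $y_j\in\operatorname{supp} f_j$ we have $|x|-R\le |x-y_j|\le |x|+R$. Setting $r=|x|/3$ and $Q=B(x,r)$, the hypothesis $|x|>3R$ is equivalent to the two inequalities $2r<|x|-R$ and $|x|+R<4r$. Hence $\operatorname{supp} f_j\subset B(x,4r)\setminus B(x,2r)=4Q\setminus 2Q=S_2(Q)$ simultaneously for all $j=1,\dots,m$. This is the crucial point: without the lower bound $|x|>3R$, the supports could spread over a range of annuli, and summing (H3) over them would pick up an uncontrolled $(\log|x|/R)^m$ factor.

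Next I would apply Minkowski's integral inequality to interchange the $L^2_{dt/t}$-norm with the spatial integration,
\begin{equation*}
T(\vec f)(x)\le \int_{(S_2(Q))^m}\Bigl(\int_0^\infty |K_t(x,\vec y)|^2\frac{dt}{t}\Bigr)^{1/2}\prod_{j=1}^m |f_j(y_j)|\,d\vec y,
\end{equation*}
and then Hölder's inequality with exponents $p_0'$ and $p_0$ to separate kernel from data:
\begin{equation*}
T(\vec f)(x)\le \Bigl(\int_{(S_2(Q))^m}\bigl(\int_0^\infty|K_t(x,\vec y)|^2\tfrac{dt}{t}\bigr)^{p_0'/2}d\vec y\Bigr)^{1/p_0'}\Bigl(\int_{(S_2(Q))^m}\prod_j|f_j|^{p_0}d\vec y\Bigr)^{1/p_0}.
\end{equation*}
The first factor is bounded using condition (H3) applied to $(j_1,\dots,j_m)=(2,\dots,2)$, yielding $C\,2^{-mn\cdot 2/p_0}|Q|^{-m/p_0}$ (as already used in the proof of Lemma~\ref{23}). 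The second factor equals $\prod_j(\int_{S_2(Q)}|f_j|^{p_0})^{1/p_0}\le |4Q|^{m/p_0}\prod_j(|4Q|^{-1}\int_{4Q}|f_j|^{p_0})^{1/p_0}\le |4Q|^{m/p_0}\mathcal M_{p_0}(\vec f)(x)$, since $x$ is the center of $4Q$. Multiplying the two factors and using $|4Q|^{m/p_0}/|Q|^{m/p_0}=4^{nm/p_0}$ collapses everything to $C\mathcal M_{p_0}(\vec f)(x)$.

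The only real obstacle is the geometric setup in the first paragraph; everything else is a one-line application of Minkowski, Hölder, and (H3). A minor issue to verify is that $(j_1,\dots,j_m)=(2,\dots,2)\ne (0,\dots,0)$, so (H3) is indeed applicable, and that the estimate is uniform in $R$, which it is since the constant $4^{nm/p_0}$ depends only on $m,n,p_0$.
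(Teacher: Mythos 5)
Your proof is correct and follows essentially the same route as the paper: the single-annulus observation (that $|x|>3R$ forces all supports into $B(x,4|x|/3)\setminus B(x,2|x|/3)$), then Minkowski, Hölder with exponents $p_0,p_0'$, and a single application of (H3). You simply parametrize the annulus as $S_2(Q)$ with $Q=B(x,|x|/3)$, while the paper tacitly uses $S_1$ of $B(x,2|x|/3)$; this is a cosmetic difference.
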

\begin{proof}
By the Minkowski inequality, the H\"older inequality, the support property of
$f_j$ and condition (H3), we obtain
\begin{align*}
T(\vec f)(x)&\le \int_{B(0,R)^m}
\bigl( \int_{0}^\infty |K_t(x,y_1,\cdots,y_m)|^2 \frac{dt}{t}
\bigr)^{\frac {1}2}\prod_{j=1}^m|f_{j}(y_j)|d\vec y
\\
&\le \int_{(B(x,\frac43 |x|)\setminus (B(x,\frac23 |x|))^m}
\bigl( \int_{0}^\infty |K_t(x,y_1,\cdots,y_m)|^2 \frac{dt}{t}
\bigr)^{\frac {1}2}\prod_{j=1}^m|f_{j}(y_j)|d\vec y
\\
&\le \bigl(\int_{(B(x,\frac43 |x|)\setminus (B(x,\frac23 |x|))^m}
\bigl( \int_{0}^\infty |K_t(x,y_1,\cdots,y_m)|^2 \frac{dt}{t}
\bigr)^{\frac {p_0'}2}d\vec y\bigr)^{1/p_0'}
\\
&\quad\quad\times
\bigl(\int_{(B(x,\frac43 |x|)\setminus (B(x,\frac23 |x|))^m}
\prod_{j=1}^m|f_{j}(y_j)|^{p_0}d\vec y\bigr)^{1/p_0}
\\
&\le C\mathcal{M}_{p_0}(\vec{f})(x),
\end{align*}
which concludes the proof of Lemma \ref{22-2}.
\end{proof}

\textbf{Proof of Theorem \ref{1}.}
 First, we will show that (1) in Theorem \ref{1} is true. By Lemma \ref{22}, we may assume that $||\mathcal{M}_{p_0}\vec{f}||_{L^p(\nu_{\vec{\omega}})}$ is finite. Without loss of generality, we futher assume that each $f_i>0$, $f_i\in C_c^\infty(\mathbb{R}^n)$ and $\nu_{\vec{\omega}}$ are bounded functions. Now, we claim that
$\int_{\mathbb{R}^n}(T(\vec{f}))^p\nu_{\vec{\omega}}dx< \infty.$
In fact,
 $$\int_{\mathbb{R}^n}(T(\vec{f}))^p\nu_{\vec{\omega}}dx=\int_{3B}(T(\vec{f}))^p\nu_{\vec{\omega}}dx+\int_{(3B)^c}(T(\vec{f}))^p\nu_{\vec{\omega}}dx.$$

By the H\"{o}lder inequality, it yields that
$$\int_{3B}(T(\vec{f}))^p\nu_{\vec{\omega}}dx\leq C \prod
\limits_{i=1}^{m}||f_i||_{L^{p_i} }<\infty.$$
On the one hand, by using Lemma \ref{22-2}, it holds that
$$\int_{(3B)^c}(T(\vec{f}))^p\nu_{\vec{\omega}}dx\leq  \int_{(3B)^c}(\mathcal{M}_{p_0}(\vec{f}))^p\nu_{\vec{\omega}}dx<\infty.$$

Now, we are in a position to prove  $\int_{\mathbb{R}^n}(M_\delta T(\vec{f}))^p\nu_{\vec{\omega}}dx<\infty.$  Since $\omega\in A_\infty,$ then there exists $q_0>1,$ such that $\omega\in A_{q_0}.$ We may take $\delta>0$, small enough and $p/ \delta>q_0$ such that $\omega\in A_{p/ \delta}.$ Then, the boundedness of $M$ yields that
$$\int_{\mathbb{R}^n}(M_\delta T(\vec{f}))^p\nu_{\vec{\omega}}dx<\int_{\mathbb{R}^n}( T(\vec{f}))^p\nu_{\vec{\omega}}dx< \infty.$$

Thus, the desired estimates follows by using Fefferman-Stein's inequality,

\begin{eqnarray*}
 \big(\int_{\mathbb{R}^n}\big(T(\vec{f})\big)^p\nu_{\vec{\omega}}dx\big)^{1/p}\leq C  \big( \int_{\mathbb{R}^n}(M_\delta \big(T(\vec{f})\big))^p\nu_{\vec{\omega}}dx\big)^{1/p}\leq C  \big( \int_{\mathbb{R}^n}(M_\delta^\sharp \big
 (T(\vec{f})\big))^p\nu_{\vec{\omega}}dx\big)^{1/p}\\
 \leq C \big( \int_{\mathbb{R}^n}(\mathcal{M}_{p_0}(\vec{f}))^p\nu_{\vec{\omega}}dx\big)^{1/ p}\leq C \big( \int_{\mathbb{R}^n} |f_i|^{p_i}\omega_idx\big)^{1/ p_i}.
\end{eqnarray*}

The proof of Theorem \ref{1} (2) can be treated similarly as that in Theorem \ref{1} (1), with only a slight modifications. Thus, we omit the proof of it.

Thus we complete the proof of Theorem \ref{1}. \qed
\par\smallskip

Next, we turn to the proof of Theorem \ref{2}. We prepare several lemmas.
\begin{lemma} Suppose $K_t$ satisfies $(H3)$ for some $1\le p_0<\infty$. Suppose $f_i\in C_c^\infty(\mathbb{R}^n)$ and
$\operatorname{supp}f_i \subset B(0,R)$ for any $i=1,\cdots,m.$
Then there is a constant $C<\infty$  such that for $|x|>3R$
and bounded function $b_j(x)$, $j=1,\cdots,m$, the following estimate holds uniformly.
$$
T_{\vec{b}}(\vec{f})(x)\leq C\|\vec b\|_\infty \mathcal{M }_{p_0}(\vec{f})(x).
$$
\end{lemma}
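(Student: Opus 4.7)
The plan is to imitate the argument of Lemma \ref{22-2} almost verbatim, exploiting the crucial new ingredient that the commutator factors $b_i(x)-b_i(y_i)$ are uniformly bounded by $2\|\vec b\|_\infty$ whenever $b_i\in L^\infty$. This reduces the commutator estimate to the non-commutator estimate, up to a harmless multiplicative constant.

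First I would fix an index $i\in\{1,\dots,m\}$ and treat the $i$-th summand
\[
T_{\vec b,i}(\vec f)(x):=\biggl(\int_0^\infty\Bigl|\int_{(\mathbb{R}^n)^m}(b_i(x)-b_i(y_i))K_t(x,\vec y)\prod_{j=1}^m f_j(y_j)\,d\vec y\Bigr|^2\frac{dt}{t}\biggr)^{1/2}
\]
separately. Using $|b_i(x)-b_i(y_i)|\le 2\|b_i\|_\infty\le 2\|\vec b\|_\infty$, pulling this scalar outside the inner integral, and then applying the Minkowski integral inequality (to interchange the $L^2(dt/t)$-norm with the spatial integration), we obtain
\[
T_{\vec b,i}(\vec f)(x)\le 2\|\vec b\|_\infty\int_{(\mathbb{R}^n)^m}\biggl(\int_0^\infty|K_t(x,\vec y)|^2\frac{dt}{t}\biggr)^{1/2}\prod_{j=1}^m|f_j(y_j)|\,d\vec y.
\]

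Next, I would use the support hypothesis $\operatorname{supp} f_j\subset B(0,R)$ together with $|x|>3R$. This forces every $y_j$ appearing in the integrand to satisfy $|y_j|<R<|x|/3$, hence $|x-y_j|\in(\tfrac{2}{3}|x|,\tfrac{4}{3}|x|)$. Therefore the integration is effectively over $\bigl(B(x,\tfrac{4}{3}|x|)\setminus B(x,\tfrac{2}{3}|x|)\bigr)^m$, a region which can be viewed as (a subset of) a single annulus $S_k(Q)$ for a ball $Q$ centred at $x$ of appropriate radius. Applying H\"older's inequality with exponents $p_0$ and $p_0'$ separates the kernel from the data:
\[
T_{\vec b,i}(\vec f)(x)\le 2\|\vec b\|_\infty\,\mathcal{I}_{\mathrm{ker}}\cdot\mathcal{I}_{\mathrm{data}},
\]
where $\mathcal{I}_{\mathrm{ker}}$ is the $L^{p_0'}$-norm of $\bigl(\int_0^\infty|K_t(x,\vec y)|^2\,dt/t\bigr)^{1/2}$ over the relevant annular product, and $\mathcal{I}_{\mathrm{data}}$ is the $L^{p_0}$-norm of $\prod_j|f_j(y_j)|$ over the same set.

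Condition (H3) applied to this annulus controls $\mathcal{I}_{\mathrm{ker}}$ by $C|Q|^{-m/p_0}2^{-mnk/p_0}$, which is exactly the factor needed to produce the average defining $\mathcal{M}_{p_0}$. Concretely, $\mathcal{I}_{\mathrm{ker}}\cdot\mathcal{I}_{\mathrm{data}}\lesssim\prod_{j=1}^m\bigl(\tfrac{1}{|Q|}\int_Q|f_j|^{p_0}\bigr)^{1/p_0}\le\mathcal{M}_{p_0}(\vec f)(x)$. Summing over $i=1,\dots,m$ and absorbing $m$ into the constant yields the claimed pointwise bound. The only step requiring any care is verifying that the annular region on which the integrand is supported actually fits within a single shell $S_k(Q)$ (or a bounded number of such shells) centred at $x$, so that (H3) applies directly; this is immediate from the inequalities $\tfrac{2}{3}|x|<|x-y_j|<\tfrac{4}{3}|x|$ derived above, so there is no genuine obstacle.
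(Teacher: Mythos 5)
Your proposal is correct and follows essentially the same route as the paper, which simply states that one should ``use the same arguments as in Lemma~\ref{22-2}.'' You carry that out explicitly: bound $|b_i(x)-b_i(y_i)|\le 2\|\vec b\|_\infty$ uniformly, then repeat the Minkowski--H\"older--(H3) chain of Lemma~\ref{22-2} on the annulus $B(x,\tfrac43|x|)\setminus B(x,\tfrac23|x|)$, which is exactly a single shell $S_1(Q)$ for $Q=B(x,\tfrac23|x|)$, giving the bound by $\mathcal{M}_{p_0}(\vec f)(x)$.
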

\begin{proof}
We can use the same arguments as in Lemma \ref{22-2} to finish the proof.
\end{proof}
\begin{lemma}\label{lem31} Let $T$ be a
multilinear square function with a kernel satisfying conditions (H1), (H2) and (H3) for some $1\leq p_0<\infty.$
Then, for any $0<\delta<\varepsilon<\min\{1,\frac{p_0}{m}\}$ and $q_0>p_0$,
there is a constant $C<\infty$
such that for any bounded and compactly supported $f_j (j=1,\dots,m),$ the following inequality holds
$$
M^{\sharp}_\delta \big(T_{\vec{b}}(\vec{f})\big)(x)
\leq C  ||\vec{b}||_{BMO}\big(\mathcal{M}_{q_0}(\vec{f})(x)+M_\varepsilon
\big(T (\vec{f})\big)(x)\big) .
$$
\end{lemma}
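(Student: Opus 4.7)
The argument would follow the template for commutators of multilinear operators adapted to the square function setting (cf.\ the proof of Lemma~\ref{23}). Since $T_{\vec b}=\sum_{i=1}^m T_{b_i}$, it suffices to establish
\[
M_\delta^\sharp\bigl(T_{b_i}(\vec f)\bigr)(x)\le C\|b_i\|_{BMO}\bigl(\mathcal{M}_{q_0}(\vec f)(x)+M_\varepsilon(T(\vec f))(x)\bigr)
\]
for each $i$, and by symmetry I take $i=1$ and write $b=b_1$. Fix $x$ and a ball $B\ni x$; set $B^*=8B$ and $\lambda=b_{B^*}$. Writing $b(z)-b(y_1)=(b(z)-\lambda)-(b(y_1)-\lambda)$ inside the kernel of $T_b(\vec f)(z)$ and applying Minkowski in $L^2(dt/t)$ gives the pointwise bound
\[
T_b(\vec f)(z)\le |b(z)-\lambda|\,T(\vec f)(z)+T(g_1,f_2,\dots,f_m)(z),
\]
where $g_1=(b-\lambda)f_1$.

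Next, I would split $f_j=f_j^0+f_j^\infty$ with $f_j^0=f_j\chi_{B^*}$ for $j\ge 2$, and $g_1=g_1^0+g_1^\infty$ analogously, producing $2^m$ terms $T_{\vec\alpha}(z):=T(g_1^{\alpha_1},f_2^{\alpha_2},\dots,f_m^{\alpha_m})(z)$ indexed by $\vec\alpha\in\{0,\infty\}^m$. Selecting the constant $c:=\sum_{\vec\alpha\ne\vec 0}T_{\vec\alpha}(x)$, the $\delta$-average $(|B|^{-1}\int_B|T_b(\vec f)(z)-c|^\delta dz)^{1/\delta}$ splits into three groups. For the first group, $(|B|^{-1}\int_B(|b(z)-\lambda|T(\vec f)(z))^\delta dz)^{1/\delta}$, H\"older with exponents $\varepsilon/\delta>1$ and $(\varepsilon/\delta)'$ followed by John--Nirenberg yields $C\|b\|_{BMO}M_\varepsilon(T(\vec f))(x)$. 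For the local term $T_{\vec 0}$, the weak $L^{p_0/m}$ boundedness from (H1) combined with Kolmogorov's inequality (valid since $\delta<p_0/m$) produces the $L^{p_0}$-norm of $(b-\lambda)f_1^0$ on $B^*$; I then apply generalized H\"older to write it as $\|b-\lambda\|_{L^{q_0p_0/(q_0-p_0)}(B^*)}\cdot\|f_1\|_{L^{q_0}(B^*)}$, and John--Nirenberg absorbs the BMO factor, leaving $C\|b\|_{BMO}\mathcal{M}_{q_0}(\vec f)(x)$.

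For the nonlocal terms $|T_{\vec\alpha}(z)-T_{\vec\alpha}(x)|$ with $\vec\alpha\ne\vec 0$, I would apply Minkowski in $L^2(dt/t)$, then H\"older with exponent $p_0'$ over the annular decomposition $S_{j_k}(B^*)$, and invoke (H2) to obtain the decay factor $|x-z|^{m(\delta-n/p_0)}|B^*|^{-m\delta/n}2^{-m\delta j_0}$; this mirrors the mechanism in the proof of Lemma~\ref{23}, except that one of the factors in the integrand now carries the BMO oscillation $|b-\lambda|$ on $2^{j_0}B^*$. This is the main obstacle: on far annuli $|b-\lambda|$ grows linearly in $j_0$, and must be controlled simultaneously with the kernel regularity while still extracting an $L^{q_0}$-quantity with $q_0>p_0$. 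The resolution is generalized H\"older combined with the standard bound $(|2^{j_0}B^*|^{-1}\int_{2^{j_0}B^*}|b-\lambda|^r)^{1/r}\lesssim(j_0+1)\|b\|_{BMO}$ (valid for any $r<\infty$); the resulting polynomial $(j_0+1)$-factor is then absorbed by the exponential decay $2^{-mj_0(\delta-n/p_0)}$ granted by $\delta>n/p_0$ from (H2), yielding the required bound $C\|b\|_{BMO}\mathcal{M}_{q_0}(\vec f)(x)$ and completing the proof.
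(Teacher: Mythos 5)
Your proposal mirrors the paper's strategy closely: decompose $b(z)-b(y_1)=(b(z)-\lambda)-(b(y_1)-\lambda)$, split each $f_j$ (and the modulated $g_1$) into local and far pieces relative to $B^*$, use (H1) with Kolmogorov for the fully local piece, (H2) for the pieces carrying at least one far factor, and absorb the $(j_0+1)$ growth of the BMO oscillation on large annuli in the exponential decay from $\delta>n/p_0$. These are exactly the steps in the paper's proof of this lemma, and they are all sound.

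The one real issue is your choice of constant $c=\sum_{\vec\alpha\ne\vec 0}T_{\vec\alpha}(x)$, a sum of $L^2(dt/t)$-norms. Because $T_b(\vec f)(z)$ is itself an $L^2(dt/t)$-norm, the pointwise bound $|T_b(\vec f)(z)-c|\le |b(z)-\lambda|T(\vec f)(z)+T_{\vec 0}(z)+\sum_{\vec\alpha\ne\vec 0}|T_{\vec\alpha}(z)-T_{\vec\alpha}(x)|$ that underlies your three-group split only goes through cleanly in the upper-bound direction. In the lower-bound direction, $c-T_b(\vec f)(z)$ leaves behind, after the reverse triangle inequality, an extra nonnegative quantity equal to the difference between $\sum_{\vec\alpha\ne\vec 0}T_{\vec\alpha}(x)$ and the $L^2(dt/t)$-norm of the $t$-dependent \emph{sum} $\sum_{\vec\alpha\ne\vec 0}\int K_t(x,\vec y)(b(y_1)-\lambda)\prod_j f_j^{\alpha_j}\,d\vec y$. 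That discrepancy is finite (the paper shows finiteness via (H3) and the compact support of $\vec f$), but it is \emph{not} controlled pointwise by $\mathcal{M}_{q_0}(\vec f)(x)+M_\varepsilon(T(\vec f))(x)$, so the needed inequality does not follow with your $c$. The fix, which is precisely what the paper does, is to take $c$ to be the $L^2(dt/t)$-norm of that $t$-dependent sum rather than the sum of the norms of its summands; then the reverse triangle inequality applied to $T_b(\vec f)(z)-c$ produces a single $L^2(dt/t)$-norm of a kernel difference, and the rest of your argument goes through verbatim. For $m=1$ the two choices coincide (there is only one $\vec\alpha\ne\vec 0$), which is why this slip is easy to miss; for $m\ge 2$ it matters.
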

\begin{proof}
 We may assume
$\vec b=(b,0,\dots,0)$.
Fix a point $x\in \mathbb{R}^n$ and a ball $Q$ containing $x$.
For $0<\delta<\varepsilon<\min\{1,\frac{p_0}{m}\}$,
we need to show that there exists a constant $c_Q$ such that
$$
\bigg( \frac{1}{|Q|}\int_{Q}\big| T_{\vec{b}}(\vec{f})(z)-c_Q\big|^\delta dz
\bigg)^{1/\delta}
\leq C ||\vec{b}||_{BMO}\big(\mathcal{M}_{q_0}(\vec{f})(x)+M_\varepsilon
\big(T (\vec{f})\big)(x)\big).
$$
For any constant $c_Q$, we have
\begin{align*}
 &\Big( \frac{1}{|Q|}\int_{Q}
 \big| T_{\vec{b}}(\vec{f})(z)-c_Q\big|^\delta dz\Big)^{1/\delta}
\\
&\leq  C \Big( \frac{1}{|Q|}\int_{Q}|b(z)-b_{Q^*}|^\delta\Bigl(
\int_0^\infty\Big|\int_{\mathbb{R}^{nm}}K_t(z,\vec{y})\prod_{j=1}^m f_j(y_j)
d\vec{y}\Big|^2\frac{dt}{t}\Big)^{\delta/2} dz\Big)^{1/\delta}
\\
&\quad +  C \Big( \frac{1}{|Q|}\int_{Q}\Big|\Big(\int_0^\infty\Bigl|
\int_{\mathbb{R}^{nm}}K_t(z,\vec{y})
(b(y_1)-b_{Q^*})\prod_{j=1}^m f_j(y_j)d\vec{y}\Big|^2\frac{dt}{t}
\Big)^{1/2}%
-c_Q\Big|^{\delta} dz\Big)^{1/\delta}
\\&
:= I  +  II.
\end{align*}

The H\"{o}lder inequality gives that
\begin{eqnarray*}
 I &\leq & C
\Big( \frac{1}{|Q^*|}\int_{Q^*}|(b(z)-b_{Q^*})|^{p'\delta}dz\Big)^{1/ p'\delta}
\Big( \frac{1}{|Q^*|}\int_{Q^*}|T(\vec{f})(z)|^{p\delta}dz\Big)^{1/ p\delta}
\\
 &\leq & C ||\vec{b}||_{BMO} M_\varepsilon \big(T (\vec{f})\big)(x),
 \end{eqnarray*}
where we have chosen $p>1$ so that $\delta p<\varepsilon<p_0/m$ and
$\delta p'>1$.

Now for each $j$ we decompose $f_j=f^0_j+f^\infty_j$,
 where $f^0_j=f_j\chi_{Q^*},j=1,\dots, m,$ and $Q^*=8Q.$
 Then
$$
\prod_{j=1}^m f_j(y_j)=\sum_{\vec{\alpha}}f_1^{\alpha_1}(y_1)\cdots
f_m^{\alpha_m}(y_m),
$$
where $\vec{\alpha}=(\alpha_1,\cdots,\alpha_m)$ with $\alpha_i=0$ or $\infty.$

Now, we introduce the notion, $c_Q=\big( \int_0^\infty |c_{Q,t}|^2\frac{dt}{t} \big)^{1/2},$
where $$c_{Q,t}=  \sum_{\vec{\alpha}\neq \vec{0}}
\int_{\mathbb{R}^{nm}}(b(y_1)-b_{Q^*})K_t(x,y_1,\cdots,y_m)
\prod_{j=1}^m f_j^{\alpha_j}(y_j)d\vec{y}.$$
Similarly as before, the finiteness of $c_Q$ follows from the condition (H3). Moreover,
\begin{align*}
II&\le\big( \frac{1}{|Q|}\int_{Q}\big(\int_0^\infty\big|
\int_{\mathbb{R}^{nm}}K_t(z,\vec{y})(b(y_1)-b_{Q^*})
\prod_{j=1}^m f_j(y_j)d\vec{y}-c_{Q,t}\big|^2\frac{dt}{t}\big)^{\delta/2}
dz\big)^{1/\delta}
\\
&\leq C\big( \frac{1}{|Q|}\int_{Q}\big(\int_0^\infty\big|
\int_{\mathbb{R}^{nm}}K_t(z,\vec{y})(b(y_1)-b_{Q^*})\prod_{j=1}^m f_j^0(y_j)
d\vec{y} \big|^2\frac{dt}{t}\big)^{\delta/2} dz\big)^{1/\delta}
\\
&+ \sum_{\vec{\alpha}\neq \vec{0}}
\big( \frac{1}{|Q|}\int_{Q}\big(\int_0^\infty\big| \int_{\mathbb{R}^{nm}}
\big(K_t(z,\vec{y})-K_t(x,\vec{y})\big)(b(y_1)-b_{Q^*})\\&\quad\times
\prod_{j=1}^m |f_j^{\alpha_j}(y_j)|d\vec{y}\big|^2\frac{dt}{t}
\big)^{\delta/2} dz\big)^{1/\delta}
:=II_{\vec{0} } + II_{\vec{\alpha}:\vec{\alpha}\neq \vec{0}}.
\end{align*}
The condition (H1), together with the Kolmogorov inequality ($p_0<q_0$) gives that
\begin{align*}
 II_{\vec{0} }
&\leq  C \big( \frac{1}{|Q|}
\int_{Q}|T((b-b_{Q^*})f^0_{1},\dots,f^0_m)(z)|^{\delta}dz\big)^{1/ \delta}
\\
 &\leq  C
   ||T((b-b_{Q^*})f^0_{1},\dots,f^0_m)||_{L^{p_0/m,\infty}(Q,\frac{dx}{|Q|})}
\\
   &\leq  C \big( \frac{1}{|Q|}\int_{Q}|(b(z)-b_{Q^*})f^0_1(z)|^{p_0}dz
   \big)^{1/ p_0} \prod_{j=2}^\infty \big( \frac{1}{|Q|}
   \int_{Q}|f_j^0(z)|^{p_0}dz\big)^{1/ p_0}
\\
    &\leq  C ||\vec{b}||_{BMO} \mathcal{M}_{q_0}(\vec{f})(x).
 \end{align*}
A similar argument as in the proof of Lemma \ref{23} will lead to that
 \begin{align*}
&\int_{\mathbb{R}^{nm}}\big(\int_0^\infty\big| K_t(z,\vec{y})-K_t(x,\vec{y})
\big|^2\frac{dt}{t}\big)^{1/ 2}|(b(y_1)-b_{Q^*})|
\prod_{j=1}^m| f_j^{\alpha_j}(y_j)|d\vec{y}
\\
  &\leq C\sum_{j_0 \geq 1} \frac{|x-z|^{m(\delta-n/p_0)}}{|Q^*|^{m\delta/n}}
  m2^{-m\delta j_0}2^{j_0mn/p_0}|Q^*|^{m/p_0}
\\
 &\times \big(\frac{1}{|2^{j_0}Q^*|}
 \int_{2^{j_0}Q^*}|(b(y_1)-b_{Q^*})f_1(y_1)|^{p_0}dy_1 \big)^{\frac1{ p_0}}
 \prod_{j=2}^m  \big(\frac{1}{|2^{j_0}Q^*|}\int_{2^{j_0}Q^*}|f_j(y_j)|^{p_0}
 dy_j \big)^{\frac1{ p_0}}
\\
 &\leq  C ||\vec{b}||_{BMO} \mathcal{M}_{q_0}(\vec{f})(x).
\end{align*}
 Here, $\delta>n/p_0$ and $x,z\in Q$.
Then,  by Minkowski's inequality, we get
\begin{align*}
  II_{\vec{\alpha}:\vec{\alpha}\neq \vec{0}}
  &\leq  C\big( \frac{1}{|Q|}\int_{Q}\big( \int_{\mathbb{R}^{nm}}
\big(\int_0^\infty\big| K_t(z,\vec{y})-K_t(x,\vec{y})\big|^2\frac{dt}{t}
\big)^{1/ 2}|(b(y_1)-b_{Q^*})|\\&\quad \times\prod_{j=1}^m| f_j^{\alpha_j}(y_j)|d\vec{y}
\big)^{\delta} dz\big)^{1/\delta}
\\
  &\leq  C \mathcal{M}_{q_0}(\vec{f})(x).
\end{align*}
Then, the proof of Lemma \ref{lem31} is finished.
\end{proof}
\textbf{Proof of Theorem \ref{2}.} We may assume that $||\vec{b}||_{BMO}=1$. By repeating the same arguments as in the proof of Theorem \ref{1} (1), we get $\int_{\mathbb{R}^n}(T_{\vec{b}}(\vec{f}))^p\nu_{\vec{\omega}}dx$ and
$\int_{\mathbb{R}^n}(M_\delta T_{\vec{b}}(\vec{f}))^p\nu_{\vec{\omega}}dx$ are finite.
Since $\nu_{\vec{\omega}}\in A_{pm/p_0}$, Theorem \ref{1} (1) gives that
$$\big( \int_{\mathbb{R}^n}(M_\epsilon \big(T (\vec{f})\big))^p\nu_{\vec{\omega}}dx\big)^{1/p}\leq C\big( \int_{\mathbb{R}^n}\big(T (\vec{f})\big)^p\nu_{\vec{\omega}}dx\big)^{1/p}\leq C \big( \int_{\mathbb{R}^n} |f_i|^{p_i}\omega_idx\big)^{1/ p_i}.$$
It is known from \cite{lerner} that if $\vec{\omega}\in A_{\vec{P}/ p_0,}$ then there exists $q_0>p_0$ such that $\vec{\omega}\in A_{\vec{P}/ q_0}.$ Then
$$\|\mathcal{M}_{q_0}(\vec{f}) \|_{L^{p}(\nu_{\vec{w}})}\leq C \prod_{i=1}^{m} \|f_i\|_{L^{p_i}(w_i)} .$$
Thus, we have
\begin{eqnarray*}
 \big( \int_{\mathbb{R}^n}\big(T_{\vec{b}}(\vec{f})\big)^p\nu_{\vec{\omega}}dx\big)^{1/p}\leq C  \big( \int_{\mathbb{R}^n}(M_\delta \big(T_{\vec{b}}(\vec{f})\big))^p\nu_{\vec{\omega}}dx\big)^{1/p}\leq C  \big( \int_{\mathbb{R}^n}(M_\delta^\sharp \big(T_{\vec{b}}(\vec{f})\big))^p\nu_{\vec{\omega}}dx\big)^{1/p}\\
 \leq C \big( \int_{\mathbb{R}^n}(\mathcal{M}_{q_0}(\vec{f}))^p\nu_{\vec{\omega}}dx\big)^{1/ p}+C\big( \int_{\mathbb{R}^n}(M_\epsilon \big(T (\vec{f})\big))^p\nu_{\vec{\omega}}dx\big)^{1/p}\leq C \big( \int_{\mathbb{R}^n} |f_i|^{p_i}\omega_idx\big)^{1/ p_i}.
\end{eqnarray*}
Hence, we complete the proof of Theorem \ref{2}.

\section{Proof of Theorem 1.5}\label{Sec-5}
We begin with some basic lemmas.
\begin{lemma}\label{lem:LlogL^a}
For $0<\alpha<\infty$, let
\begin{equation*}
\Phi(t)=t(1+\log^+t)^{\alpha}, \quad 0<t<\infty.
\end{equation*}
Then it is a Young function and its complementary Young function is equivalent
to
\begin{equation*}
\overline{\Phi_1}(t)=\int_0^t \bar\varphi_1(s)ds,\ \text{ where }
\bar\varphi_1(t)=\begin{cases}
t^{1/\alpha}, &0<t<1
\\ e^{t^{1/\alpha}-1}, &t\ge1.
\end{cases}
\end{equation*}
\end{lemma}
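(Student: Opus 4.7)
The plan is to verify $\Phi$ is a Young function by checking convexity through its right-derivative, then compute a convenient representative of the complementary function, and finally show this representative is equivalent to the claimed $\overline{\Phi_1}$.

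For convexity I would compute the right-derivative $\varphi := \Phi'_+$. On $[0,1)$ we have $\varphi(t) = 1$, while on $[1,\infty)$ direct differentiation gives $\varphi(t) = (1+\log t)^{\alpha} + \alpha(1+\log t)^{\alpha-1}$. Monotonicity on $[1,\infty)$ follows from a second differentiation yielding $\Phi''(t) = \alpha(1+\log t)^{\alpha-2}(\alpha+\log t)/t > 0$, and at $t=1$ there is only an upward jump in the derivative, so $\varphi$ is non-decreasing on all of $[0,\infty)$ and $\Phi$ is convex. Combined with $\Phi(0)=0$, continuity, and $\Phi(t)/t\to\infty$ as $t\to\infty$, this makes $\Phi$ a Young function.

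To compute the complementary function I would work with the equivalent representative $\widetilde{\Phi}(t) := \int_0^t \widetilde{\varphi}(s)\,ds$ where $\widetilde{\varphi}(s) = 1$ on $[0,1]$ and $\widetilde{\varphi}(s) = (1+\log s)^{\alpha}$ on $[1,\infty)$. A single integration by parts on $[1,t]$ gives
\begin{equation*}
 \widetilde{\Phi}(t) \;=\; \Phi(t) - \alpha\int_1^t (1+\log s)^{\alpha-1}\,ds,
\end{equation*}
so the two differ by a genuine lower-order term, yielding the two-sided bound $\Phi(c_1 t) \lesssim \widetilde{\Phi}(t) \lesssim \Phi(c_2 t)$; hence $\Phi$ and $\widetilde{\Phi}$ are equivalent and their complementaries are equivalent. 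Solving $(1+\log s)^{\alpha} = u$ gives the generalized inverse $\widetilde{\varphi}^{-1}(u) = 0$ on $[0,1]$ and $\widetilde{\varphi}^{-1}(u) = e^{u^{1/\alpha}-1}$ on $[1,\infty)$, so $\overline{\widetilde{\Phi}}(t) = \int_0^t \widetilde{\varphi}^{-1}(s)\,ds$ vanishes on $[0,1]$ and equals $\int_1^t e^{s^{1/\alpha}-1}\,ds$ for $t\ge 1$.

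The final step is to match $\overline{\widetilde{\Phi}}$ to $\overline{\Phi_1}$. On $[1,\infty)$ the two functions differ only by the bounded constant $\overline{\Phi_1}(1) = \alpha/(\alpha+1)$, which is dominated by the exponentially growing integral for $t$ slightly larger than $1$; on $[0,1]$ both are uniformly bounded. Hence one obtains $\overline{\widetilde{\Phi}}(c_1 t) \lesssim \overline{\Phi_1}(t) \lesssim \overline{\widetilde{\Phi}}(c_2 t)$ by tuning $c_1, c_2$, which gives the claimed equivalence $\overline{\Phi} \sim \overline{\Phi_1}$. The main obstacle I anticipate is just bookkeeping: one must be careful that every passage between an exact Young function and a more convenient equivalent representative respects the equivalence class of the complementary, but the dominant exponential growth $e^{t^{1/\alpha}}$ at infinity absorbs all the lower-order discrepancies and makes the comparisons routine.
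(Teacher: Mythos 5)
Your overall strategy --- simplify the density of $\Phi$ to one whose inverse is explicit, then transfer equivalence to the complementaries --- matches the paper's, but the crucial choice near $t=0$ differs and this creates a gap. The paper builds $\Phi_1$ from the density $\phi_1(t)=t^\alpha$ on $(0,1)$ and $(1+\log t)^\alpha$ on $[1,\infty)$, which is strictly increasing from $0$ to $\infty$; its inverse is exactly $\bar\varphi_1(u)=u^{1/\alpha}$ on $(0,1)$ and $e^{u^{1/\alpha}-1}$ on $[1,\infty)$, so $\overline{\Phi_1}(t)=\int_0^t\bar\varphi_1(s)\,ds$ is the \emph{exact} complementary of $\Phi_1$ and nothing remains to be patched. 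You instead take $\widetilde{\varphi}\equiv 1$ on $[0,1]$, whose generalized inverse is identically $0$ on $[0,1)$, forcing your $\overline{\widetilde{\Phi}}(t)=0$ for all $t\le 1$.

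That choice breaks your final step. You assert $\overline{\Phi_1}(t)\lesssim\overline{\widetilde{\Phi}}(c_2 t)$ ``by tuning $c_1,c_2$,'' but for any fixed $c_2$ and every $0<t\le 1/c_2$ the right side is $0$ while $\overline{\Phi_1}(t)=\frac{\alpha}{\alpha+1}\,t^{(\alpha+1)/\alpha}>0$, so the upper bound simply cannot hold near the origin; ``both uniformly bounded on $[0,1]$'' does not rescue a two-sided multiplicative comparison when one side vanishes identically. The fix is precisely what the paper does: first regularize $\Phi$ to $\Phi_0(t)=t^{1+\alpha}$ on $(0,1)$, whose derivative $(1+\alpha)t^\alpha$ is then sandwiched between $\phi_1(t)=t^\alpha$ and $(1+\alpha)\phi_1(t)$; the power $t^\alpha$ (not the constant $1$) inverts to the $u^{1/\alpha}$ appearing in the statement, so $\overline{\Phi_1}$ drops out exactly. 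As written, your argument gives equivalence of the complementaries only at infinity, not the Young-function equivalence claimed.
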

\begin{proof}
Let
\begin{equation*}
\Phi_0(t)=\begin{cases}
t^{1+\alpha}, &0<t<1,
\\
t(1+\log t)^{\alpha}, & 1\leq t<\infty.
\end{cases}
\end{equation*}
Then $\Phi_0(t)\sim\Phi(t)$ and
\begin{equation*}
\phi_0(t)=\Phi_0'(t)=\begin{cases}
(1+\alpha)t^{\alpha}, &0<t<1,
\\
(1+\log t)^{\alpha}+\alpha(1+\log t)^{\alpha-1}, &1<t<\infty.
\end{cases}
\end{equation*}
Futhermore,
\begin{equation*}
\phi_0'(t)=\begin{cases}
\alpha(1+\alpha)t^{\alpha-1}, &0<t<1,
\\
\frac{\alpha(1+\log t)^{\alpha-1}}{t}
+\frac{\alpha(\alpha-1)(1+\log t)^{\alpha-2}}{t}, & 1<t<\infty.
\end{cases}
\end{equation*}
So, $\Phi_0(t)$ is also a Young function.
Let
\begin{equation*}
\phi_1(t)=\begin{cases}
t^{\alpha}, &0<t<1,
\\
(1+\log t)^{\alpha}, &1\leq t<\infty.
\end{cases}
\end{equation*}
Then we see that $\phi_1(t)<\phi_0(t)\le (1+\alpha)\phi_1(t)$ $(0<t<\infty)$
and
\begin{equation*}
\phi_1'(t)=\begin{cases}
\alpha t^{\alpha-1}, &0<t<1,
\\
\frac{\alpha(1+\log t)^{\alpha-1}}{t}, & 1<t<\infty.
\end{cases}
\end{equation*}
Hence $\Phi_1(t)=\int_0^t \phi_1(s)ds$ is a Young function and
is equivalent to $\Phi_0(t)$ and so to $\Phi(t)$.

The inverse function of $\phi_1(t)$ is given by
\begin{equation*}
\bar\phi_1(t)=\begin{cases}
t^{1/\alpha}, &0<t<1,\\ e^{t^{1/\alpha}-1}, &t\ge1,
\end{cases}
\end{equation*}
which completes the proof of Lemma \ref{lem:LlogL^a}.

\end{proof}
\begin{lemma}\label{lem:p-BMO-Lp-duaity}
Let $b\in \mathrm{BMO}(\mathbb R^n)$ and $f\in L^p\log^pL(\mathbb R^n)$ for
some $1\le p<\infty$. Then,  for any ball $Q$,
$x\in Q$ and $j\in\mathbb N_0$, there exists a constant $C>0$ such that
\begin{equation*}
\Big( \frac{1}{|2^jQ|}\int_{2^jQ}|(b(y)-b_{Q})f(y)|^{p}dy\Big)^{1/p}
\leq  C (j+1)||{b}||_{BMO} {M}_{L^p\log^pL }(f)(x).
\end{equation*}
\end{lemma}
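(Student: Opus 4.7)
\medskip
\noindent\textbf{Proof proposal.}
The plan is to split $b(y)-b_{Q}=\bigl(b(y)-b_{2^jQ}\bigr)+\bigl(b_{2^jQ}-b_{Q}\bigr)$
and apply Minkowski's inequality. The constant piece $b_{2^jQ}-b_{Q}$ is
handled by the standard telescoping estimate
$|b_{2^jQ}-b_{Q}|\lesssim (j+1)\,\|b\|_{BMO}$, after which the contribution is
$(j+1)\|b\|_{BMO}\bigl(|2^jQ|^{-1}\int_{2^jQ}|f|^p\bigr)^{1/p}$, and this is
$\lesssim (j+1)\|b\|_{BMO}\,\|f\|_{\Phi,2^jQ}\le(j+1)\|b\|_{BMO}M_{L^p\log^pL}(f)(x)$,
using that $t^p\lesssim \Phi(t)$ for $\Phi(t)=t^p(1+\log^+ t)^p$ and that
$x\in Q\subset 2^jQ$.

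For the remaining piece $\bigl(|2^jQ|^{-1}\int_{2^jQ}|b(y)-b_{2^jQ}|^p|f(y)|^p\,dy\bigr)^{1/p}$,
I would apply the generalized H\"older inequality in Orlicz spaces to the
complementary Young pair provided by Lemma \ref{lem:LlogL^a}
with $\alpha=p$: the function
$A(t)=e^{t^{1/p}}-1$ and its complementary $\bar A(t)\sim t(1+\log^+ t)^p$.
This yields
\begin{equation*}
\frac{1}{|2^jQ|}\int_{2^jQ}|b-b_{2^jQ}|^p|f|^p\,dy
\;\lesssim\;
\bigl\||b-b_{2^jQ}|^p\bigr\|_{A,2^jQ}\,\bigl\||f|^p\bigr\|_{\bar A,2^jQ}.
\end{equation*}
The first factor equals $\|b-b_{2^jQ}\|_{\exp L,2^jQ}^p$, which by the
Orlicz form of the John--Nirenberg inequality is $\lesssim \|b\|_{BMO}^p$.

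For the second factor I would observe, by comparing the defining Luxemburg
integrals under the substitution $\mu=\lambda^p$, that
$\bigl\||f|^p\bigr\|_{\bar A,2^jQ}\sim \|f\|_{\Phi,2^jQ}^p$
for $\Phi(t)=t^p(1+\log^+t)^p$; then $\|f\|_{\Phi,2^jQ}\le M_{L^p\log^p L}(f)(x)$
since $x\in 2^jQ$. Combining the two contributions and taking $p$-th roots yields the
claimed bound with constant $C(j+1)\|b\|_{BMO}$.

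The main technical obstacle is the Orlicz identification
$\bigl\||f|^p\bigr\|_{\bar A,2^jQ}\sim \|f\|_{\Phi,2^jQ}^p$: one has to verify
carefully that raising the argument to the $p$-th power inside the two Young functions produces
equivalent Luxemburg norms (modulo a constant depending only on $p$), treating separately
the regions $|f|/\lambda<1$ and $|f|/\lambda\ge 1$ where the log-factor is
activated. Once this equivalence and the explicit form of $\bar A$ from
Lemma \ref{lem:LlogL^a} are in hand, the remainder of the argument reduces
to a routine assembly of John--Nirenberg, generalized H\"older and Minkowski.
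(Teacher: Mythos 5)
Your proposal is correct and follows essentially the same route as the paper: the core is the Orlicz Hölder inequality against the pair $\bigl(e^{t^{1/p}}-1,\ t(1+\log^+t)^p\bigr)$ from Lemma 5.1, the John--Nirenberg estimate for the $\exp L$ norm of $b-b_{2^jQ}$, and the identification $\||f|^p\|_{\bar A,2^jQ}\sim\|f\|_{\Phi,2^jQ}^p$ (which the paper also uses implicitly). The only cosmetic difference is that you collapse the telescoping sum into the single constant $|b_{2^jQ}-b_Q|\lesssim(j+1)\|b\|_{BMO}$ before applying Minkowski, whereas the paper keeps the full chain $\sum_{l=1}^{j}(b_{2^lQ}-b_{2^{l-1}Q})$ inside the Minkowski step; the two yield the same $(j+1)$ factor.
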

\begin{proof}
(a) The case $j=0$.
Let $Q$ be a ball in $\mathbb R^n$.
Let $\Phi(t)$ and $\overline{\Phi_1}(t)$ be in Lemma \ref{lem:LlogL^a} as
$\alpha=p$.
Then by the H\"older inequality in Orlicz spaces, it holds that
\begin{equation}\label{eq:BMO-Lp-1}
\frac{1}{|Q|}\int_{Q}|(b(y)-b_{Q})f(y)|^{p}dy
\le C\|b\|_{*}^p\|((b(y)-b_Q)/\|b\|_*)^p \|_{{\bar{\Phi}_1,Q}}
\||f|^p\|_{{\Phi,Q}}.
\end{equation}
Note that
\begin{equation*}
\overline{\Phi_1}(t)\le \int_{0}^{t}e^{s^{1/\alpha}}ds=:\Psi(t).
\end{equation*}
Thus, for any $c>0$, we have
\begin{align*}
\frac{1}{|Q|}\int_Q \bar{\Phi}_1
\Bigl(\frac{|b(y)-b_Q|^p}{(c\|b\|_*)^p}\Bigr)dy
&\le
\frac{1}{|Q|}\int_Q \Psi\Bigl(\frac{|b(y)-b_Q|^p}{(c\|b\|_*)^p}\Bigr)dy
\\
&=\frac{1}{|Q|}\int_Q
\int_{0}^{\frac{|b(y)-b_Q|^p}{(c\|b\|_*)^p}}e^{s^{1/p}}ds\,dy
\\
&=\frac{1}{|Q|}\int_Q
\int_{0}^{\infty}\chi_{\{\frac{|b(y)-b_Q|^p}{(c\|b\|_*)^p}>s\}}
e^{s^{1/p}}ds\,dy
\\
&=\int_{0}^{\infty}e^{s^{1/p}}
\biggl( \frac{1}{|Q|}\int_Q \chi_{\{\frac{|b(y)-b_Q|^p}{(c\|b\|_*)^p}>s\}}dy
\biggr)ds.
\end{align*}
On the other hand, by the John-Nirenberg inequality, there exist positive
constants $c_1$ and $c_2$ such that
\begin{equation*}
|\{x\in Q: |b(x)-b_Q|>\lambda\}|\le c_2|Q|e^{-c_1\lambda/\|b\|_*}, \quad
\lambda>0.
\end{equation*}
Hence, choosing $c$ big enough such that $c_1c>1$, we get
\begin{equation*}
\frac{1}{|Q|}\int_Q \bar{\Phi}_1
\Bigl(\frac{|b(y)-b_Q|^p}{(c\|b\|_*)^p}\Bigr)
\le
\int_{0}^{\infty}e^{s^{1/p}}c_2e^{-c_1cs^{1/p}}ds
=c_2\int_{0}^{\infty}e^{-(c_1c-1)s^{1/p}}ds<\infty,
\end{equation*}
which shows that the norm $\|((b(y)-b_Q)/\|b\|_*)^p \|_{{\bar{\Phi}_1},Q}$
is bounded by a constant depending on $p,c_1, c_2$.
Combining this with \eqref{eq:BMO-Lp-1} gives
\begin{equation*}
\Big( \frac{1}{|Q|}\int_{Q}|(b(y)-b_{Q})f(y)|^{p}dy\Big)^{1/p}
\leq  C \|{b}\|_{BMO} {M}_{L^p\log^pL }(f)(x),
\end{equation*}
for any $x\in Q$.
\par\noindent
(b) The case $j\in\mathbb N$. By the Minkowski inequality and step (a), one obtains
\begin{align*}
&\Big(\frac{1}{|2^jQ|}\int_{2^jQ}|(b(y)-b_{Q})f(y)|^{p}dy\Big)^{1/p}
\\
&\le \sum_{l=1}^{j}
\Big( \frac{1}{|2^jQ|}\int_{2^jQ}|(b_{2^lQ}-b_{2^{l-1}Q})f(y)|^{p}dy\Big)^{1/p}
+\Big( \frac{1}{|2^jQ|}\int_{2^jQ}|(b(y)-b_{2^jQ})f(y)|^{p}dy\Big)^{1/p}
\\
&\le \sum_{l=1}^{j}\frac{2^n}{|2^{l}Q|}\int_{2^{l}Q}|(b(y)-b_{2^{l}Q})|dy
\Big(\frac{1}{|2^{j}Q|}\int_{2^{j}Q}|f(y)|^{p}dy\Big)^{1/p}
+C||{b}||_{BMO} \||f|^p\|_{{\Phi,Q}}
\\
&\le C(j+1)||{b}||_{BMO} {M}_{L^p\log^pL }(f)(x).
\end{align*}
This completes the proof of Lemma \ref{lem:p-BMO-Lp-duaity}.

\end{proof}

Using Lemma \ref{lem:p-BMO-Lp-duaity}, we can improve Lemma \ref{lem31} as follows.
\begin{lemma} Let $T$ be a
multilinear square function with a kernel satisfying conditions (H1), (H2)
and (H3) for some $1\leq p_0<\infty.$
Then, for any $0<\delta<\varepsilon<\min\{1,\frac{p_0}{m}\}$,
there is a constant $C<\infty$
such that for any bounded and compactly supported $f_j, j=1,\dots,m.$
$$
M^{\sharp}_\delta \big(T_{\vec{b}}(\vec{f})\big)(x)
\leq C  ||\vec{b}||_{BMO}\big(\sum_{i=1}^m\mathcal{M}_{\Phi}^{(i)}(\vec{f})(x)
+M_\varepsilon\big(T (\vec{f})\big)(x)\big),
$$
where $\Phi(t)=t^{p_0}(1+\log^+ t)^{p_0}$ and
\begin{equation*}
\mathcal{M}_{\Phi}^{(i)}(\vec{f})(x)=\sup_{Q\ni x}\|f_i\|_{\Phi,Q}
\prod_{j\ne i}\left(\frac{1}{|Q|}\int_Q |f_j(y)|^{p_0}\,dy\right)^{\frac 1 p_0}.
\end{equation*}
\end{lemma}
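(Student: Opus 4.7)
The plan is to follow the architecture of the proof of Lemma \ref{lem31} and only refine the two places where the auxiliary maximal function appeared, replacing $\mathcal{M}_{q_0}(\vec f)$ with the Orlicz quantities $\mathcal{M}_{\Phi}^{(i)}(\vec f)$ by invoking Lemma \ref{lem:p-BMO-Lp-duaity} at the borderline exponent $p=p_0$ instead of at an exponent $q_0>p_0$. By linearity it suffices to treat $\vec b=(b,0,\dots,0)$. Fix $x\in Q$, split each $f_j=f_j^0+f_j^\infty$ with $f_j^0=f_j\chi_{Q^*}$, $Q^*=8Q$, and choose the same constant
$c_Q=\bigl(\int_0^\infty |c_{Q,t}|^2\,\frac{dt}{t}\bigr)^{1/2}$
with $c_{Q,t}=\sum_{\vec\alpha\neq\vec 0}\int_{\mathbb R^{nm}}(b(y_1)-b_{Q^*})K_t(x,\vec y)\prod_j f_j^{\alpha_j}(y_j)d\vec y$; the finiteness of $c_Q$ follows as in Lemma \ref{23} from (H3). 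Write
$\bigl(\frac1{|Q|}\int_Q|T_{\vec b}(\vec f)(z)-c_Q|^\delta dz\bigr)^{1/\delta}\le I+II_{\vec 0}+\sum_{\vec\alpha\ne\vec 0}II_{\vec\alpha}$
exactly as in Lemma \ref{lem31}. The first piece $I$ is handled verbatim: choose $p>1$ with $\delta p<\varepsilon<p_0/m$ and $\delta p'>1$, apply H\"older together with the John--Nirenberg inequality, yielding $I\le C\|\vec b\|_{BMO}\,M_\varepsilon(T(\vec f))(x)$.

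For the local term $II_{\vec 0}$, condition (H1) and the Kolmogorov inequality give
\[
II_{\vec 0}\le C\Bigl(\frac{1}{|Q^*|}\int_{Q^*}|(b(z)-b_{Q^*})f_1(z)|^{p_0}dz\Bigr)^{1/p_0}\prod_{j=2}^m\Bigl(\frac{1}{|Q^*|}\int_{Q^*}|f_j(z)|^{p_0}dz\Bigr)^{1/p_0}.
\]
Applying Lemma \ref{lem:p-BMO-Lp-duaity} with $p=p_0$ and $j=0$ to the first factor we control it by $C\|b\|_{BMO}\,M_{L^{p_0}\log^{p_0}L}(f_1)(x)\sim C\|b\|_{BMO}\,\sup_{Q\ni x}\|f_1\|_{\Phi,Q}$, so the whole product is bounded by $C\|\vec b\|_{BMO}\,\mathcal{M}_\Phi^{(1)}(\vec f)(x)$.

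For $II_{\vec\alpha}$ with $\vec\alpha\ne\vec 0$, we mimic the estimate of Lemma \ref{23}: decompose $((Q^*)^c)^{\#\{\alpha_j=\infty\}}\times (Q^*)^{\#\{\alpha_j=0\}}$ into annular pieces indexed by $j_0=\max j_k$, apply H\"older with exponent $p_0'$ in the $\vec y$ variables, and use (H2). This yields the familiar majorant
\[
\sum_{j_0\ge 1}\frac{|x-z|^{m(\delta-n/p_0)}}{|Q^*|^{m(\delta/n-1/p_0)}}\,2^{-m j_0(\delta-n/p_0)}\prod_{j=1}^{m}\Bigl(\frac{1}{|2^{j_0}Q^*|}\int_{2^{j_0}Q^*}|g_j(y)|^{p_0}dy\Bigr)^{1/p_0},
\]
where $g_1=(b-b_{Q^*})f_1$ and $g_j=f_j$ for $j\ge 2$. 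The point is that Lemma \ref{lem:p-BMO-Lp-duaity} now applies on the ball $2^{j_0}Q^*$ with $j$ there equal to $j_0+3$, giving
\[
\Bigl(\frac{1}{|2^{j_0}Q^*|}\int_{2^{j_0}Q^*}|(b(y)-b_{Q^*})f_1(y)|^{p_0}dy\Bigr)^{1/p_0}\le C(j_0+1)\|b\|_{BMO}\,M_{L^{p_0}\log^{p_0}L}(f_1)(x),
\]
which combines with $\prod_{j\ge 2}\bigl(\frac{1}{|2^{j_0}Q^*|}\int_{2^{j_0}Q^*}|f_j|^{p_0}\bigr)^{1/p_0}\le \mathcal{M}_{\Phi}^{(1)}(\vec f)(x)/\|f_1\|_{\Phi,2^{j_0}Q^*}\cdot\dots$; organizing the bookkeeping, the full product is bounded by $C(j_0+1)\|\vec b\|_{BMO}\,\mathcal{M}_\Phi^{(1)}(\vec f)(x)$. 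Since $\delta>n/p_0$, the series $\sum_{j_0\ge 1}(j_0+1)2^{-m j_0(\delta-n/p_0)}$ converges, and using $x,z\in Q$ the remaining $|x-z|^{m(\delta-n/p_0)}/|Q^*|^{m(\delta/n-1/p_0)}$ is $O(1)$, so $\sum_{\vec\alpha\ne\vec 0} II_{\vec\alpha}\le C\|\vec b\|_{BMO}\,\mathcal{M}_\Phi^{(1)}(\vec f)(x)$.

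The main technical point to be careful about is the linear-in-$j_0$ loss coming from Lemma \ref{lem:p-BMO-Lp-duaity} (the telescoping $b_{2^{l}Q^*}-b_{2^{l-1}Q^*}$ sum), which did not appear in Lemma \ref{lem31} because there we had $q_0>p_0$ and could absorb the $(b-b_{Q^*})$ factor by a slightly better H\"older exponent; here this linear factor is harmless only thanks to the strict inequality $\delta>n/p_0$ that provides the geometric decay $2^{-m j_0(\delta-n/p_0)}$. Combining the three estimates yields the asserted sharp-maximal bound.
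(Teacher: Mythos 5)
Your proposal is essentially the intended argument: the paper states this lemma with only the remark ``Using Lemma \ref{lem:p-BMO-Lp-duaity}, we can improve Lemma \ref{lem31}'' and gives no separate proof, and your plan --- rerun the proof of Lemma \ref{lem31} but replace the $q_0>p_0$ H\"older step by the Orlicz duality of Lemma \ref{lem:p-BMO-Lp-duaity} at the borderline $p=p_0$, keeping the same $c_Q$ and the same $I+II_{\vec 0}+\sum II_{\vec\alpha}$ split --- is exactly the natural reading of that remark. The treatment of $I$ and $II_{\vec 0}$ and the observation that the $(j_0+1)$-loss is absorbed by $2^{-mj_0(\delta-n/p_0)}$ are all right.

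One place is glossed over in a way that, read literally, does not close. Lemma \ref{lem:p-BMO-Lp-duaity} \emph{as stated} bounds $\bigl(\frac{1}{|2^{j_0}Q^*|}\int_{2^{j_0}Q^*}|(b-b_{Q^*})f_1|^{p_0}\bigr)^{1/p_0}$ by $C(j_0+1)\|b\|_{BMO}\,M_{L^{p_0}\log^{p_0}L}(f_1)(x)$, a \emph{supremum over all balls containing $x$}. You then multiply by $\prod_{j\ge 2}\bigl(\frac{1}{|2^{j_0}Q^*|}\int_{2^{j_0}Q^*}|f_j|^{p_0}\bigr)^{1/p_0}\le \mathcal{M}_\Phi^{(1)}(\vec f)(x)/\|f_1\|_{\Phi,2^{j_0}Q^*}$, but the resulting product carries the uncontrolled ratio $M_{L^{p_0}\log^{p_0}L}(f_1)(x)/\|f_1\|_{\Phi,2^{j_0}Q^*}\ge 1$, which can be arbitrarily large. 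What you really need is the \emph{local} version --- $\bigl(\frac{1}{|2^{j_0}Q^*|}\int_{2^{j_0}Q^*}|(b-b_{Q^*})f_1|^{p_0}\bigr)^{1/p_0}\le C(j_0+1)\|b\|_{BMO}\,\|f_1\|_{\Phi,2^{j_0}Q^*}$ --- which is precisely what the telescoping argument in the \emph{proof} of Lemma \ref{lem:p-BMO-Lp-duaity} establishes before the final supremum is taken. With that local form, $(j_0+1)\|b\|_{BMO}\,\|f_1\|_{\Phi,2^{j_0}Q^*}\prod_{j\ge 2}\bigl(\frac{1}{|2^{j_0}Q^*|}\int_{2^{j_0}Q^*}|f_j|^{p_0}\bigr)^{1/p_0}\le (j_0+1)\|b\|_{BMO}\,\mathcal{M}_\Phi^{(1)}(\vec f)(x)$ cleanly, and the $j_0$-sum converges. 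So you should state and use the local form of Lemma \ref{lem:p-BMO-Lp-duaity} (or apply it with base ball $2^{j_0}Q^*$ and $j=0$, plus a telescoping chain $b_{Q^*}\to b_{2^{j_0}Q^*}$ contributing $(j_0+1)\|b\|_*$), rather than its stated maximal-function form. Likewise, if you apply the lemma with base ball $Q$ and $j=j_0+3$ you pick up $b-b_Q$ rather than $b-b_{Q^*}$; this costs only $|b_{Q}-b_{Q^*}|\lesssim\|b\|_*$, but it should be said. These are bookkeeping issues, not conceptual ones; with them repaired the proof is complete.
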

The proof of Theorem \ref{3} will be based on the following lemmas.
\begin{lemma}\label{4}
Let $T$ be a multilinear square function with a kernel satisfying condition
condition  (H1), (H2) and (H3) for some $1\le p_0<\infty$.
 Let $\omega$ be an $A_\infty$ weight and
 let $\Phi(t)=t^{p_0}(1+\log^+ t)^{p_0}$. Suppose that $\vec{b}\in BMO^m.$
 Then,  there exists a constant $C$ (independent of $\vec{b}$) such that the following inequality holds
\begin{align*}
  \sup_{t>0} \frac{1}{\Phi(1/ t)}
  \omega(\{y\in \mathbb{R}^n: |T_{\vec{b}}\vec{f}(y)|&>t^m \})
\\
  &\leq C \sum_{i=1}^m\sup_{t>0} \frac{1}{\Phi(1/ t)}
  \omega(\{y\in \mathbb{R}^n: |\mathcal{M}_{\Phi}^{(i)}\vec{f}(x)|>t^m \}),
\end{align*}
for all bounded vector function $\vec{f}=(f_1,\cdots, f_m)$ with compact
support.
\end{lemma}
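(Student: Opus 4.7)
The plan is to deduce this inequality from three ingredients stated earlier in the paper: the Fefferman--Stein good-$\lambda$ inequality of Lemma~\ref{21}(1), the preceding improved sharp maximal pointwise estimate for $T_{\vec b}$ (the unlabeled improvement of Lemma~\ref{lem31}), and the control $M^\sharp_\varepsilon(T\vec f)\lesssim \mathcal M_{p_0}(\vec f)$ furnished by Lemma~\ref{23}. Throughout, I would make the change of variable $\lambda=t^m$ and set $\varphi(\lambda):=1/\Phi(\lambda^{-1/m})$. A short calculation using $\Phi(t)=t^{p_0}(1+\log^+ t)^{p_0}$ shows that $\varphi(\lambda)\sim \lambda^{p_0/m}$ for large $\lambda$ and $\varphi(\lambda)\sim \lambda^{p_0/m}/(1+\log(1/\lambda))^{p_0}$ for small $\lambda$, and hence $\varphi(2\lambda)/\varphi(\lambda)$ is bounded above and below; thus $\varphi$ is doubling, which is exactly the hypothesis of Lemma~\ref{21}(1).

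First, since $\vec f$ is bounded and compactly supported, $T_{\vec b}\vec f$ is locally integrable (by condition (H3) and the Minkowski inequality, as in the proof of Lemma~\ref{23}), so Lebesgue's differentiation theorem gives $|T_{\vec b}\vec f(y)|\le M_\delta(T_{\vec b}\vec f)(y)$ a.e. This yields
$$
\sup_{\lambda>0}\varphi(\lambda)\,\omega(\{|T_{\vec b}\vec f|>\lambda\})\le \sup_{\lambda>0}\varphi(\lambda)\,\omega(\{M_\delta(T_{\vec b}\vec f)>\lambda\}).
$$
Applying Lemma~\ref{21}(1) with the doubling $\varphi$ replaces $M_\delta$ by $M^\sharp_\delta$ on the right-hand side. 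Then the preceding improved version of Lemma~\ref{lem31} gives the pointwise bound
$$
M^\sharp_\delta(T_{\vec b}\vec f)(x)\le C\|\vec b\|_{BMO}\Bigl(\sum_{i=1}^m \mathcal M^{(i)}_\Phi(\vec f)(x)+M_\varepsilon(T\vec f)(x)\Bigr),
$$
so splitting the level set into its $m+1$ constituent pieces reduces matters to controlling the $\mathcal M^{(i)}_\Phi$ and $M_\varepsilon(T\vec f)$ contributions separately (the overall constant $C\|\vec b\|_{BMO}$ is absorbed into the supremum using the doubling of $\varphi$).

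The $\mathcal M^{(i)}_\Phi$ summands give precisely the right-hand side of the claimed inequality. For the remaining $M_\varepsilon(T\vec f)$ term, I would invoke Lemma~\ref{21}(1) a second time to replace $M_\varepsilon$ by $M^\sharp_\varepsilon$, and then use Lemma~\ref{23} to conclude $M^\sharp_\varepsilon(T\vec f)\lesssim \mathcal M_{p_0}(\vec f)$. Since $\Phi(t)\ge t^{p_0}$ implies $\|g\|_{\Phi,Q}\gtrsim \bigl(|Q|^{-1}\int_Q|g|^{p_0}\bigr)^{1/p_0}$, it follows that $\mathcal M_{p_0}(\vec f)\lesssim \mathcal M^{(i)}_\Phi(\vec f)$ for any index $i$, so this term is also absorbed into the right-hand side.

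The main obstacle will be the finiteness hypothesis required to apply Lemma~\ref{21}(1), whose conclusion only holds for functions whose left-hand side is finite a priori. I would handle this in the standard way: truncate by replacing $T_{\vec b}\vec f$ and $T\vec f$ by the analogues in which the $t$-integration is restricted to $[1/R,R]$ and the integrals in $\vec y$ are restricted to $B(0,R)^m$. For these truncated versions the quantities involved are bounded with compact support, so the left-hand side of Lemma~\ref{21}(1) is trivially finite, and the pointwise bounds of the improved Lemma~\ref{lem31} and of Lemma~\ref{23} remain valid with the same constants. Passing to the limit $R\to\infty$ by monotone convergence of the level sets yields the full inequality. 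Beyond this truncation step and a careful check that the doubling of $\varphi$ survives across the transition $\lambda\sim 1$, the proof is a bookkeeping assembly of the three cited results.
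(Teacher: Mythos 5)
Your proof follows the same logical skeleton as the paper's: Lebesgue differentiation to pass to $M_\delta(T_{\vec b}\vec f)$, Lemma~\ref{21}(1) to pass to $M^\sharp_\delta$, the improved (unnumbered) sharp-maximal estimate for $T_{\vec b}$ in terms of $\sum_i\mathcal M^{(i)}_\Phi(\vec f)+M_\varepsilon(T\vec f)$, a second application of Lemma~\ref{21}(1) plus Lemma~\ref{23} to reduce $M_\varepsilon(T\vec f)$ to $\mathcal M_{p_0}(\vec f)$, and finally $\mathcal M_{p_0}\lesssim\mathcal M^{(i)}_\Phi$. Your verification that $\varphi(\lambda)=1/\Phi(\lambda^{-1/m})$ is doubling is also correct. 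Up to this point you have essentially reproduced the paper's argument.

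The gap is in the a priori finiteness needed to invoke Lemma~\ref{21}(1). You propose to truncate the $t$-integration to $[1/R,R]$ and the $\vec y$-integration to $B(0,R)^m$, and assert that the truncated quantities are then bounded with compact support. Neither part of that assertion is justified by the hypotheses. The spatial truncation is vacuous once $R$ exceeds the radius of $\operatorname{supp}\vec f$. More importantly, conditions (H2) and (H3) control only integrated quantities (certain $L^{p_0'}$-averages of the $L^2(dt/t)$-norm of $K_t$ over dyadic annuli away from the center), and (H1) is a weak-type boundedness hypothesis for $T$; none of this furnishes a pointwise bound on $K_t(x,\vec y)$, which may blow up on the diagonal $x=y_1=\cdots=y_m$. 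Restricting $t$ to $[1/R,R]$ does not tame that diagonal singularity, so there is no reason the truncated $T^{(R)}_{\vec b}\vec f$ should be bounded on a neighborhood of $\operatorname{supp}\vec f$; it is also not compactly supported, since the kernel merely decays.

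The paper resolves this differently. It adds the qualitative hypotheses that $b_j$ and $\omega$ are bounded. Then $\omega\lesssim 1$ and $\Phi(1/t)^{-1}\le t^{p_0}$ reduce \eqref{441} to showing $T_{\vec b}\vec f\in L^{p_0/m,\infty}$, which is checked by splitting $\mathbb R^n$ into $B(0,3R)$ and its complement: on $B(0,3R)$ one uses $|T_{\vec b}\vec f|\lesssim\|b\|_\infty\,T\vec f$ together with the strong boundedness of $T$ (a consequence of (H1), Lemma~\ref{23} and Fefferman--Stein), while on $|x|>3R$ one uses Lemma~\ref{22-2} with $b$ bounded to get $|T_{\vec b}\vec f|\lesssim\|b\|_\infty\,\mathcal M_{p_0}\vec f\in L^{p_0/m,\infty}$. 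You should replace your truncation step with an argument of this kind; the rest of your proof is sound and coincides with the paper's.
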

\begin{proof}
We borrow some ideas from Theorem 3.19 in \cite{lerner}.  We may assume $||b_j||_{BMO}=1, $ for $j=1,\cdots,m.$ It is enough to  prove the result for
\begin{equation}\label{41}
\begin{split}
T_{b}(\vec{f})(x)
= \bigg( \int_{0}^\infty \Big|
\int_{(\mathbb{R}^n)^m}(b_{1}(x)-b_1(y_1))K_t(x,\vec{y})
\prod_{j=1}^mf_{j}(y_j)dy_1 \dots dy_m\Big|^2\frac{dt}{t}\bigg)^{\frac 12}.
\end{split}
\end{equation}
Let $0<\delta<\varepsilon<1/m$.
By the Lebesgue differentiation theorem, it suffices to prove
\begin{equation}\label{42}
\begin{split}
\sup_{t>0} \frac{1}{\Phi(1/ t)}\omega & (\{y\in \mathbb{R}^n: |M_\delta (T_{\vec{b}}\vec{f})(y)|>t^m\})\\
&\leq C\sup_{t>0}  \frac{1}{\Phi(1/ t)}\omega(\{y\in \mathbb{R}^n:
\mathcal{M}_{\Phi}^{(1)}(\vec{f})(y)>t^m\}).
\end{split}
\end{equation}

However, Lemma \ref{21} yields that
\begin{equation}\label{43}
\begin{split}
\sup_{t>0} \frac{1}{\Phi(1/ t)}
& \omega(\{y\in \mathbb{R}^n: |M_\delta T_{\vec{b}}\vec{f})(y)|>t^m\})
\\
&\leq C\sup_{t>0}\frac{1}{\Phi(1/ t)}
\omega(\{y\in \mathbb{R}^n:|M^\sharp_\delta (T_{\vec{b}}\vec{f})(y)|>t^m\}),
\end{split}
\end{equation}
whenever the left-hand side is finite. Therefore, \eqref{42} follows
from
\begin{equation}\label{44}
\begin{split}
\sup_{t>0} \frac{1}{\Phi(1/ t)}\omega & (\{y\in \mathbb{R}^n: |M^\sharp_\delta (T_{\vec{b}}\vec{f})(y)|>t^m\})\\
&\leq C\sup_{t>0}  \frac{1}{\Phi(1/ t)}\omega(\{y\in \mathbb{R}^n:
\mathcal{M}_{\Phi}^{(1)}(\vec{f})(y)>t^m\}).
\end{split}
\end{equation}

In order to use Fefferman-Stein inequality,
we claim the following inequalities hold:
\begin{equation}\label{441}
\begin{split}
\sup_{t>0} \frac{1}{\Phi(1/ t)}\omega
& (\{y\in \mathbb{R}^n: |M_\delta (T_{\vec{b}}\vec{f})(y)|>t^m\})<\infty.
\end{split}
\end{equation}
and
\begin{equation}\label{442}
\begin{split}
\sup_{t>0} \frac{1}{\Phi(1/ t)}\omega
& (\{y\in \mathbb{R}^n: |M_\varepsilon (T\vec{f})(y)|>t^m\})<\infty.
\end{split}
\end{equation}

Admitting the claim first, we will prove \eqref{44}. Lemma \ref{23}, Lemma \ref{lem31} and
Fefferman-Stein inequality yield that
\begin{equation*}\label{410}
\begin{split}
&\sup_{t>0} \frac{1}{\Phi(1/ t)}
\omega(\{y\in \mathbb{R}^n: |M^\sharp_\delta (T_{\vec{b}}\vec{f})(y)|>t^m\})
\\
&\leq  C\sup_{t>0}  \frac{1}{\Phi(1/ t)}
\omega(\{y\in \mathbb{R}^n:  \mathcal{M}_{\Phi}^{(1)}(\vec{f})(y)
+M_\varepsilon (T \vec{f})(y)>t^m\})
\\
&\leq C\sup_{t>0} \frac{1}{\Phi(1/ t)}
\omega(\{y\in \mathbb{R}^n: \mathcal{M}_{\Phi}^{(1)}(\vec{f})(y)>t^m\})
\\
&\quad+ C\sup_{t>0}  \frac{1}{\Phi(1/ t)}
\omega(\{y\in \mathbb{R}^n: M_\varepsilon (T \vec{f})(y)>t^m\})
\\
&\leq C \sup_{t>0}  \frac{1}{\Phi(1/ t)}
\omega(\{y\in \mathbb{R}^n: \mathcal{M}_{\Phi}^{(1)}(\vec{f})(y)>t^m\})
\\
&\quad + C\sup_{t>0}  \frac{1}{\Phi(1/ t)}
\omega(\{y\in \mathbb{R}^n:\mathcal{M}_{p_0}(\vec{f})(y)>t^m\})
\\
&\leq C \sup_{t>0}  \frac{1}{\Phi(1/ t)}
\omega(\{y\in \mathbb{R}^n: \mathcal{M}_{\Phi}^{(1)}(\vec{f})(y)>t^m\}).\\
\end{split}
\end{equation*}

Now, we only need to show that \eqref{441} holds, by the reason that the proof of \eqref{442} is very
similar but much easier.
We  assume that the $b_j$
and $\omega$ are bounded. Suppose that $\operatorname{supp} f\subset B(0,R).$
Hence, since $\Phi(t)\geq t^{p_0}$ and $0 < \delta< 1/m$, it follows that
\begin{equation*}\label{45}
\begin{split}
&\sup_{t>0} \frac{1}{\Phi(1/ t)}
\omega(\{y\in \mathbb{R}^n: M_{\delta} (|T_{\vec{b}}\vec{f}|)(y)>t^m\})
\\
&\leq C ||\omega||_{L^\infty}\sup_{t>0}\frac{1}{\Phi(1/ t)}\left|
\{y\in \mathbb{R}^n: M_{m\delta} (|T_{\vec{b}}\vec{f}|^{1/ m})(y)>t\}\right|
\\
&\leq C
\sup_{t>0}t^{p_0}
\left|\{y\in \mathbb{R}^n: |T_{\vec{b}}\vec{f}(y)|^{1/ m}>t\}\right|
\\
&\leq C
\sup_{t>0}t^{p_0}\left|\{y\in B_{3R}: |T_{\vec{b}}\vec{f}(y)|^{1/ m}>t\}\right|
+\sup_{t>0}t^{p_0}\left|\{y\in B_{3R}^c: |T_{\vec{b}}\vec{f}(y)|^{1/ m}>t\}
\right|\\
&=I+II.
\end{split}
\end{equation*}
We first consider the contribution of $I$. Taking $r>1$, by the Assumption (H1) and the H\"{o}lder inequality, we have
\begin{equation*}\label{46}
I \leq C \int_{B(0,3R)}|T_{\vec{b}}\vec{f}(y)|^{p_0/ m}dy
 \leq C R^{(1-1/r)n}\Bigl(
\int_{\mathbb{R}^n}\left|T\vec{f}(y)\right|^{p_0r/m}dy\Bigr)^{1/r}<\infty.
\end{equation*}
For the contribution of $II$, note that we may control $|T_{\vec{b}}\vec{f}(x)|$ by
$\mathcal{M}_{p_0}\vec{f}(x) $ if we assume that $b$ is bounded.  Then
\begin{equation*}\label{47}
\begin{split}
II^m& \leq
Ct^{mp_0}|\{y\in \mathbb{R}^n: \mathcal{M}_{p_0}\vec{f}(y)^{1/m}>t\}|^m
\\
& \le\bigl(\|\mathcal{M}_{p_0}\vec{f}\|_{L^{p_0/m,\infty}}\bigr)^{p_0}
\leq C \Bigl(\prod_{i=1}^m \|f_i(x)\|_{p_0}\Bigr)^{p_0}<\infty.
\end{split}
\end{equation*}
Thus,
the claim \eqref{44} is proved. Hence, we finish the proof of Lemma \ref{4}.
\end{proof}
\begin{lemma}\label{5}
 Let $1\le p_0<\infty$ and $\vec{\omega}\in A_{\vec{1}}$.
Then, there exists a constant $C $ such that for $1\le i\le m$
$$
\nu_{\vec{\omega}}\bigl(
\{x\in \mathbb{R}^n: |\mathcal{M}_{\Phi}^{(i)}\vec{f}(x)|>t^m \}\bigr)
\leq C \prod_{j=1}^m \biggl(\int_{\mathbb{R}^n} \Phi \Bigl(\frac{|f_j(x)|}{t}
\Bigr)\omega_j(x)dx\biggr)^{1/ m},
$$
where $\Phi(t)=t^{p_0}(1+\log^+ t)^{p_0}$.
\end{lemma}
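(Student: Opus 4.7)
The plan is to combine a Vitali-type covering of the level set with the multisublinear $A_{\vec{1}}$ condition and the Luxemburg-norm machinery, in the spirit of the endpoint $L\log L$ estimate for the multilinear Hardy-Littlewood maximal operator. Since $\|\cdot\|_{\Phi,Q}$ and the $L^{p_0}$-average are both positively homogeneous, the operator $\mathcal{M}_\Phi^{(i)}$ is homogeneous of total degree $m$ in $\vec f$, so dividing each $f_j$ by $t$ reduces the claim to the case $t=1$, namely to showing $\nu_{\vec{\omega}}(\{\mathcal{M}_{\Phi}^{(i)}\vec f>1\})\le C\prod_j(\int\Phi(|f_j|)\omega_j\,dx)^{1/m}$.

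For every $x$ in the level set $E=\{\mathcal{M}_\Phi^{(i)}\vec f>1\}$ I would pick a cube $Q_x\ni x$ at which the defining supremum is almost realized, and then apply the Vitali covering lemma to extract a pairwise disjoint subfamily $\{Q_k\}$ with $E\subset\bigcup_k 5Q_k$ and
\[
\|f_i\|_{\Phi,Q_k}\prod_{j\ne i}\Bigl(\frac{1}{|Q_k|}\int_{Q_k}|f_j|^{p_0}\,dy\Bigr)^{1/p_0}>1
\]
for each $k$. The $A_{\vec{1}}$ hypothesis, equivalent to $\mathcal{M}\vec{\omega}\le C\nu_{\vec{\omega}}$ pointwise a.e., yields the geometric estimate $\nu_{\vec{\omega}}(5Q_k)\le C|Q_k|\prod_{j=1}^m(\essi_{5Q_k}\omega_j)^{1/m}$. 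Multiplying by the selection inequality above (which exceeds $1$) and regrouping the resulting factors by the index $j$, one obtains a bound of the shape $\nu_{\vec{\omega}}(5Q_k)\le C\prod_j A_{k,j}^{1/m}$; an application of the generalized H\"older inequality ($m$ factors, each to the power $1/m$) to the sum over $k$ then produces
\[
\nu_{\vec{\omega}}(E)\le\sum_k\nu_{\vec{\omega}}(5Q_k)\le C\prod_{j=1}^m\Bigl(\sum_k A_{k,j}\Bigr)^{1/m}.
\]

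For each $j\ne i$ the sum $\sum_k A_{k,j}$ is of standard $L^{p_0}$-maximal type: one absorbs $\essi_{5Q_k}\omega_j\le\omega_j(y)$ pointwise inside the integral, uses the disjointness of $\{Q_k\}$, and invokes the elementary bound $\Phi(t)\ge t^{p_0}$ to conclude $\sum_k A_{k,j}\lesssim\int_{\mathbb R^n}\Phi(|f_j|)\omega_j\,dx$. The principal obstacle is the index $j=i$: one must prove that $\sum_k A_{k,i}\le C\int_{\mathbb R^n}\Phi(|f_i|)\omega_i\,dx$, where $A_{k,i}$ involves $\|f_i\|_{\Phi,Q_k}^m$ and the Luxemburg norm does not interact linearly with multiplication because $\Phi(t)=t^{p_0}(1+\log^+t)^{p_0}$ is not a pure power. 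The plan for this step is to exploit the defining identity $\frac{1}{|Q_k|}\int_{Q_k}\Phi(|f_i|/\|f_i\|_{\Phi,Q_k})\le 1$ of the Luxemburg norm together with the monotonicity of $\Phi(t)/t^{p_0}$, sort the cubes $\{Q_k\}$ into dyadic generations according to the size of $\|f_i\|_{\Phi,Q_k}$, and transfer the essential infima of $\omega_i$ inside the integral by a Calder\'on-Zygmund-type stopping-time argument analogous to the one underlying Lemma~\ref{lem:p-BMO-Lp-duaity}, ultimately recovering the desired weighted integral of $\Phi(|f_i|)$ against $\omega_i$.
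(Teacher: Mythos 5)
Your skeleton matches the paper's: reduce to $t=1$ by homogeneity, select cubes on which the supremum is essentially attained, apply Vitali, use the $A_{\vec{1}}$ bound $\nu_{\vec{\omega}}(Q)\lesssim |Q|\prod_j(\inf_Q\omega_j)^{1/m}$, and finish by H\"older over the sum. You also correctly locate the real difficulty: the $i$-th factor carries a Luxemburg norm $\|f_i\|_{\Phi,Q_k}$, not an integral average, and this norm does not sum over disjoint cubes.

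The gap is that your proposed resolution of that difficulty (dyadic sorting of the cubes by the size of $\|f_i\|_{\Phi,Q_k}$ plus a Calder\'on--Zygmund stopping-time argument ``analogous to Lemma~\ref{lem:p-BMO-Lp-duaity}'') is not the idea that makes the proof work, and as written it would be hard to carry through. The paper instead eliminates the obstacle \emph{before} the $A_{\vec 1}$ step by a short algebraic maneuver with the Luxemburg norm: using the homogeneity $c\|g\|_{\Phi,Q}=\|cg\|_{\Phi,Q}$ $(c>0)$ it pulls the scalar $\prod_{j\ne i}\bigl((|f_j|^{p_0})_{Q_k}\bigr)^{1/p_0}$ inside the norm, so the selection condition becomes $\bigl\|f_i\prod_{j\ne i}((|f_j|^{p_0})_{Q_k})^{1/p_0}\bigr\|_{\Phi,Q_k}>1$; by the defining property of the Luxemburg norm this is equivalent to $\frac{1}{|Q_k|}\int_{Q_k}\Phi\bigl(f_i\prod_{j\ne i}((|f_j|^{p_0})_{Q_k})^{1/p_0}\bigr)\,dy>1$; then submultiplicativity of $\Phi$ splits this into $\frac{1}{|Q_k|}\int_{Q_k}\Phi(f_i)\,dy\cdot\prod_{j\ne i}\Phi\bigl(((|f_j|^{p_0})_{Q_k})^{1/p_0}\bigr)>1$; and finally Jensen's inequality applied to the convex function $\Phi_0(t)=t(1+\log^+t)^{p_0}$ gives $\Phi\bigl(((|f_j|^{p_0})_{Q_k})^{1/p_0}\bigr)\le\frac{p_0^{p_0}}{|Q_k|}\int_{Q_k}\Phi(|f_j|)\,dy$. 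After this, \emph{every} index $j$ contributes a plain integral average $\frac{1}{|Q_k|}\int_{Q_k}\Phi(f_j)\,dy$, the product is symmetric in $j$, and the $A_{\vec 1}$ estimate together with H\"older over the disjoint cubes closes the argument exactly as you outlined.

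There is also a secondary problem you did not notice: even for $j\ne i$, your step ``multiply by the selection inequality and regroup'' does not produce quantities $A_{k,j}$ that sum to $\int\Phi(|f_j|)\omega_j\,dx$, because the selection inequality carries $\bigl((|f_j|^{p_0})_{Q_k}\bigr)^{1/p_0}$ to the first power while the weight factor $(\essi_{Q_k}\omega_j)^{1/m}$ carries the exponent $1/m$; the resulting exponent $m/p_0$ on the $L^{p_0}$-average is not $1$ in general, so ``standard $L^{p_0}$-maximal type'' does not apply directly. The paper's transformation fixes this too, since it delivers the selection condition directly in the form $1\lesssim\prod_{j=1}^m\frac{1}{|Q_k|}\int_{Q_k}\Phi(f_j)\,dy$, which matches the $1/m$ exponents on the weights perfectly.
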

\begin{proof}
Some ideas will be taken from the proof of Theorem 3.17 in \cite{lerner}. By homogeneity,
we may assume that $t = 1$ and $\vec{f}\geq 0$. Set
$$
\Omega =\{x\in \mathbb{R}^n: \mathcal{M}_{\Phi}^{(i)}\vec{f}(x) >1\}.
$$

It is easy to see that $\Omega$ is open and we may assume that it is not empty. To estimate the size of $\Omega$, it is
enough to estimate the size of every compact set F contained in $\Omega$.
We note that we may use cubes in place of balls in the definition of maximal
functions.
Now, we can cover any such $F$ by a finite family of cubes
$Q_j$ for which
$$
1<\|f_1\|_{\Phi,Q_i}\prod_{j=2}^m (f_j)_{Q_j}.
$$

Using Vitali's covering lemma, we can extract a subfamily of disjoint cubes
 $Q_i$ such that
$$
F \subset \bigcup_{i} 3Q_i.
$$

By homogeneity,
$$
1<
\Bigl\|f_1\prod_{j=2}^m \bigl((f_j^{p_0})_{Q_i}\bigr)^{1/p_0}\Bigr\|_{\Phi,Q_i}
$$

and by the properties of the norm $\|\cdot\|_{\Phi, Q_i}$, this is the same as
$$
1<\frac{1}{|Q_i|} \int_{Q_i}
\Phi\Bigl(f_1(y)\prod_{j=2}^m \bigl((f_j^{p_0})_{Q_i}\bigr)^{1/p_0}\Bigr)dy.
$$
Using the fact that $\Phi$ is submultiplicative, it yields that
$$
1< \frac{1}{|Q_i|}\int_{Q_i}\Phi\bigl(f_1\bigr)dy
\prod_{j=2}^m \Phi\bigl(\bigl((f_j^{p_0})_{Q_i}\bigr)^{1/p_0}\bigr).
$$
Let $\Phi_0(t)=t(1+\log^+ t)^{p_0}$. By the Jensen inequality, we have
\begin{align*}
\Phi\bigl(\bigl((f_j^{p_0})_{Q_i}\bigr)^{1/p_0}\bigr)
&=\frac{1}{|Q_i|}\int_{Q_i}|f_j(y)|^{p_0}dy\Bigl(
1+\frac{1}{p_0}\log^+\frac{1}{|Q_i|}\int_{Q_i}|f_j(y)|^{p_0}dy\Bigr)^{p_0}
\\
&\le \frac{1}{|Q_i|}\int_{Q_i}|f_j(y)|^{p_0}dy
\Bigl(1+\log^+\frac{1}{|Q_i|}\int_{Q_i}|f_j(y)|^{p_0}dy\Bigr)^{p_0}
\\
&=\Phi_0\Bigl(\frac{1}{|Q_i|}\int_{Q_i}|f_j(y)|^{p_0}dy\Bigr)\\&
\le\frac{1}{|Q_i|}\int_{Q_i}\Phi_0(|f_j|^{p_0})(y)dy
\\
&=\frac{1}{|Q_i|}\int_{Q_i}|f_j(y)|^{p_0}
\Bigl(1+\log^+|f_j(y)|^{p_0}dy\Bigr)^{p_0}dy
\\
&\le\frac{ p_0^{p_0}}{|Q_i|}\int_{Q_i}|f_j(y)|^{p_0}
(1+\log^+|f_j(y)|)^{p_0}dy\\&
=\frac{p_0^{p_0}}{|Q_i|}\int_{Q_i}\Phi(|f_j|)(y)dy.
\end{align*}
Finally, by the condition assumed on the weights and the H\"{o}lder inequality at, one obtains 
discrete level,
\begin{equation*}
\begin{split}
\nu_{\vec{\omega}}(F)^m\approx
\Bigl(\sum_{i} \nu_{\vec{\omega}}(Q_i)\Bigr)^m
\leq p_0^{p_0}
\Bigl(\sum_{i} \prod_{j=1}^m \inf_Q \omega_j^{1/m}|Q_i|^{1/ m}\Bigl(
\frac{1}{|Q_i|} \int_{Q_i} \Phi\bigl(f_j\bigr)dy\Bigr)^{1/ m}\Bigr)^m
\\
\leq  p_0^{p_0}
\Bigl(\sum_{i} \prod_{j=1}^m\Bigr( \int_{Q_i} \Phi\bigl(f_j(y)\bigr)
\omega_j(y)dy\Bigr)^{1/m}\Bigr)^m
\leq  p_0^{p_0}\prod_{j=1}^m \Bigl( \int_{\mathbb{R}^n}
\Phi \left(|f_j(x)|\right)\omega_j(x)dx\Bigr).\\
\end{split}
\end{equation*}
which concludes the proof of Lemma \ref{5}.
\end{proof}
Using the above lemmas, now, we can show Theorem \ref{3}. \par
\textbf{Proof of Theorem \ref{3}.}  It is enough to prove the result for the
operator $T_{b}$ defined in \eqref{41}.
 By homogeneity we may assume $t=1$.
Since $\Phi$ is submultiplicative, Lemma \ref{4} and Lemma \ref{5} yield that
\begin{equation}\label{555}
\begin{split}
\nu_{\vec{\omega}}&\bigg(\bigg\{x\in \mathbb{R}^n:
T_{b}\vec{f}(x)>1 \bigg\}\bigg)^m\\&\leq
C\sup_{t>0}\frac{1}{\Phi (1/t)^m}
\nu_{\vec{\omega}}\bigg(\bigg\{x\in \mathbb{R}^n:
T_{b}\vec{f}(x)>t^m
\bigg\}\bigg)^m\\
&\leq C\sup_{t>0}\frac{1}{\Phi (1/t)^m} \nu_{\vec{\omega}}\bigg(\bigg\{x\in \mathbb{R}^n: \mathcal{M}_{\Phi}^{(1)}\vec{f}(x)>t^m
\bigg\}\bigg)^m\\
&\leq C\sup_{t>0}\frac{1}{\Phi (1/t)^m}\prod_{j=1}^m
\int_{\mathbb{R}^n}\Phi(\frac{|f_j(x)|}{t})
\omega_j(x)dx\\
&\leq C\sup_{t>0}\frac{1}{\Phi (1/t)^m}\prod_{j=1}^m
\int_{\mathbb{R}^n}\Phi(|f_j(x)|)\Phi (1/t)
\omega_j(x)dx\\
&\leq C \prod_{j=1}^m
\int_{\mathbb{R}^n}\Phi (|f_j(x)|)
\omega_j(x)dx.
\end{split}
\end{equation}
This complete the proof of Theorem \ref{3}.

\end{document}